\numberwithin{equation}{section}
\theoremstyle{plain}
\newtheorem{Thm}{Theorem}[section]
\newtheorem{MThm}[Thm]{Main Theorem}
\newtheorem{Coro}[Thm]{Corollary}
\newtheorem{Lem}[Thm]{Lemma}
\newtheorem{Que}[Thm]{Question}
\newtheorem{Rmk}[Thm]{Remark}
\theoremstyle{definition}
\newtheorem{Def}[Thm]{Definition}
\begin{document}

\begin{center}
 \title[P-moves between pants-block decompositions]{P-moves between pants-block decompositions of 3-manifolds}
\end{center}

%\address{}
%\email{}

%\subjclass{}
%\keywords{}

%\thanks{}

%\begin{abstract}
%\end{abstract}
\maketitle{}
\begin{center}
 \author{Pengcheng Xu}
\end{center}
\vspace{.5cm}

\textbf{Abstract:} A pants-block decomposition of a 3-manifold is similar to a triangulation of a 3-manifold in many aspects.
In this paper we show that any two pants-block decompositions of a 3-manifold are related by a finite 
sequence of moves which are called P-moves. The P-moves between pants-block decompositions are similar to the Pachner moves between 
triangulations. Moreover, we also give a list of types of P-moves. The main tools we used in this paper are the Morse 2-functions,
Reeb complexes and a new 2-dimensional complex called P-complex.

\section{Introduction}\label{secintro}

A pants-block decomposition of a compact, connected, closed, orientable 3-manifold $M$ is a decomposition of $M$ that cuts the manifold into a collection 
of fundamental pieces called pants blocks, which will be described in Section \ref{pb}. Pants blocks were first introduced 
in Minsky's paper ~\cite{Minsky}, to construct geometric models of ends of hyperbolic manifolds with prescribed 
ending laminations.

Agol ~\cite{Agol} and Li ~\cite{li} used pants-block decompositions to construct closed non-Haken 3-manifolds with certain 
properties, and Johnson  ~\cite{Johnson}  showed the existence of 
pants-block decompositions for all compact hyperbolic 3-manifolds (see Theorem \ref{thm2}, and the main theorem of 
~\cite{Johnson}). He also pointed out a connection to the layered triangulations, studied by Jaco and Rubinstein
~\cite{JR}. There are many analogies between triangulations and pants-block decompositions: they both contain 
1-skeletons (edges versus links), which are the boundaries of two dimensional pieces (triangles versus pairs of 
pants), and cut the 3-manifold into fundamental three dimensional pieces (tetrahedra versus pants blocks).
Based on the similarities between triangulations and pants-block decompositions, one can ask a natural question:
\begin{Que}\label{que1}
Is there a collection of moves between pants-block decompositions of the same manifold analogous to 3-dimensional Pachner 
moves between triangulations?
\end{Que}

As we will see below, a small collection of adjacent pants blocks can often be seen as defining a path in the 
pants complex for a small surface embedded in the manifold. Replacing this path by a different 
path with the same endpoints defines a new set of pants blocks that can be used to replace the original collection, 
defining a new pants-block decomposition of the manifold. By applying this construction to the 2-cells in the pants 
complex, we will define below a collection of moves between pants-block decompositions of a 3-manifold called P-moves,
and prove the following:

\begin{MThm}\label{MThm}
 Given a compact, closed, hyperbolic 3-manifold $M$, any two  pants-block decompositions are related to each other
  by a finite sequence of P-moves.
\end{MThm}

The outline of the paper is as follows: We first define pants-block decompositions in Section \ref{pre}, then 
present the HLS relations in Section \ref{sechls}, and use them to define
path moves in the pants complex in Section \ref{sec1}. We 
review the theory of Morse 2-functions
and Reeb complexes in Section \ref{secmorse}. In order to understand the decomposition better,
we introduce a new complex that we call a P-complex in Section \ref{secpcomplex}, and use this construction to prove Theorem
\ref{MThm} in Section \ref{secproof}.

\subsection*{Acknowledgement}

I want to thank Jesse Johnson, my thesis advisor, for giving me a lot of help and useful discussions in the direction
of proving this theorem, and also want to thank Henry Segerman, David Gay and David Futer for many helpful suggestions.

\section{Preliminaries}\label{pre}
In this paper, $S_{g,n}$ indicates a surface with genus $g$ and $n$ boundary components. We also say the surface is of \textit{type}
$(g,n)$. Note that we are working on curves in the surface $S$, so the distinction between punctures and boundary components
is not so important, and we do not distinguish them. We will only consider compact orientable surfaces unless otherwise specified.
\subsection{Pants decomposition}
Pants decompositions of surfaces is a widely explored field. The work most related to our paper is done by Hatcher, Lochak and Schneps ~\cite{HLS},
which we will explain in Section \ref{sechls}. 
\begin{Def}
 Given a compact, orientable surface $S$,
 a \textit{pants decomposition} for $S$ is a set $\mathcal{P}$ of pairwise disjoint, essential
simple closed curves in $S$ such that each component of $S\backslash \mathcal{P}$ is a pair of pants, i.e., type (0,3).
If $S$ admits a collection of pants decompositions
$\{\mathcal{P}_i\}$ then $S$ is called the \textit{base surface} with respect to $\{\mathcal{P}_i\}$.
\end{Def}

It is straightforward to check that  tori, annuli, disks and spheres are the only compact orientable surfaces that don't admit a pants decomposition,
so a surface admits a pants decomposition if and only if its Euler characteristic is less than or equal to $-1$. 
The only cases with $\chi=-1$ are $S_{0,3}$ and $S_{1,1}$. Another interesting category is $\chi=-2$, which contains
three cases $S_{0,4}$, $S_{1,2}$ and $S_{2,0}$. Surfaces $S_{1,1}$ and $S_{0,4}$ carry fundamental moves which will be explained in the 
following paragraph.

\begin{figure}[ht]
\centering
\includegraphics[width=.8\textwidth,height=.15 \textwidth]{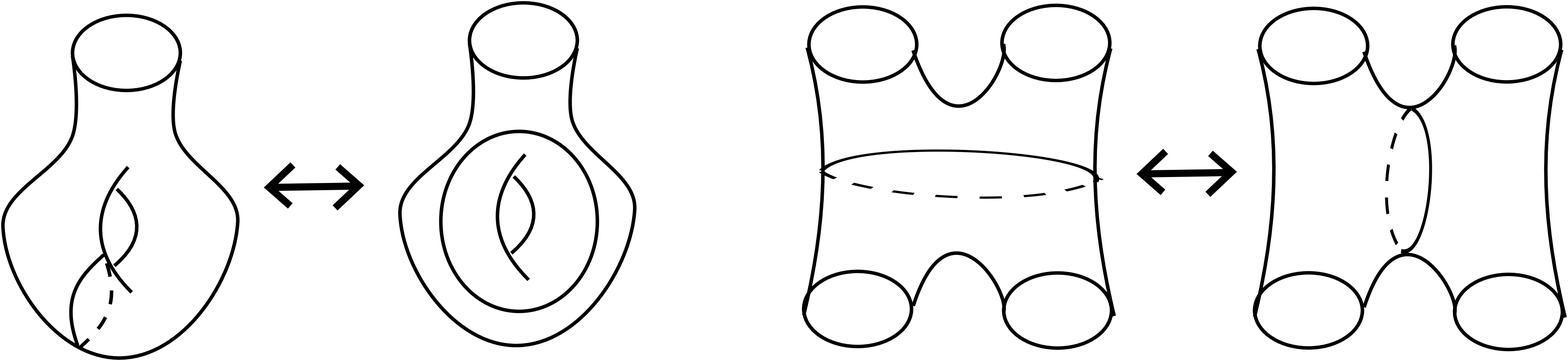}
\put(-265,5){$\alpha$}
\put(-195,35){$\alpha'$}
\put(-100,15){$\beta$}
\put(-24,25){$\beta'$}
\put(-240,-20){S-move}
\put(-80,-20){A-move}
\caption{Examples of pants moves}
\label{fig1}
\end{figure}

Each surface $S$ admits many different pants decompositions up to isotopy. 
We say two pants decompositions $\mathcal{P}$ and
$\mathcal{P}'$ of $S$ differ by a \textit{simple move},
or an \textit{S-move}, if we can find two loops $\alpha$ and $\alpha'$ from $\mathcal{P}$ and
$\mathcal{P}'$ respectively so that their geometric intersection number is 1, the complements $\mathcal{P}\backslash \{\alpha\}$
and $\mathcal{P'}\backslash \{\alpha'\}$ in $S$ both contain a type (1,1) component, and all other loops in $\mathcal{P}$ can
be isotoped to  loops in $\mathcal{P}'$. 
Similarly, we say two pants decompositions $\mathcal{P}$ and
$\mathcal{P}'$ of $S$ differ by an \textit{associativity move},
or an \textit{A-move}, if we can find two loops $\beta$ and $\beta'$ from $\mathcal{P}$ and
$\mathcal{P}'$ respectively so that the geometric intersection number is 2, the complements $\mathcal{P}\backslash \{\beta\}$
and $\mathcal{P'}\backslash \{\beta'\}$ both contain a type (0,4) component, and all other loops in $\mathcal{P}$ can
be isotoped to loops in $\mathcal{P}'$. These are the two fundamental moves between pants decompositions.
A \textit{pants move} between pants decompositions of a surface is either an S-move or an A-move. These are shown in Figure \ref{fig1}.
A pants move is the \textit{inverse} of another pants move if they are in different directions of one of the cases in Figure \ref{fig1}.

The pants moves connect different pants decompositions on a surface, which leads to the definition of a topological
graph based on this surface: 

\begin{Def}\label{def1}
 The \textit{pants graph} for a compact, orientable surface $S$ is the graph $G(S)$ whose vertices are isotopy classes of 
 pants decompositions for $S$ and with edges connecting vertices that differ by a pants move.  We say $S$ is 
 the \textit{base surface} of $G(S)$.
\end{Def}

It is not difficult to see that a pants graph for any surface is an infinite graph, for the reason that if a surface
admits a pants decomposition, it has infinitely many isotopy classes of pants decompositions. In a pants graph,  an edge 
corresponding to an $S$-move is called an $S$-edge, an edge corresponding to an $A$-move is called an $A$-edge.
\subsection{Pants blocks}\label{pb}
Given a surface $S$ and its pants graph $G(S)$, we want to consider a single edge and 
its two endpoints. This edge corresponds to a pants move, and the base surface for this pants move is either
a once-punctured torus $S_{1,1}$ or a four-punctured sphere $S_{0,4}$. Let $S'$ be a one of these subsurfaces.

 Consider the surface cross
interval $S'\times [0,1]$ such that $S'\times \{0\}$ and $S'\times \{1\}$ contain the loops from the pants decompositions
$\mathcal{P}$ and $\mathcal{P}'$ respectively. By collapsing the boundary annuli of $S$ to their core circles (see Figure
\ref{collapse}), we obtain 
a three-dimensional object with three or six loops in its boundary. Depending on the base surface, we have two different pants 
blocks: the (1,1)-block based on $S_{1,1}$ with three loops or the (0,4)-block based on $S_{0,4}$ with six loops,
 shown in Figure \ref{pantsblock}.

 \begin{figure}[ht]
\centering
\includegraphics[width=.6\textwidth,height=.15 \textwidth]{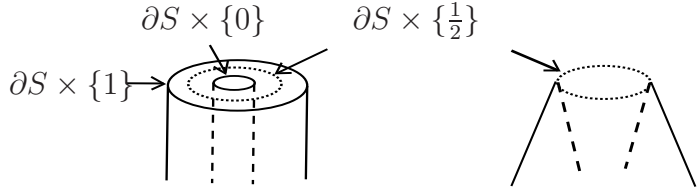}
\put(-210,60){$\partial S\times \{0\}$}
\put(-130,60){$\partial S\times \{\frac{1}{2}\}$}
\put(-260,35){$\partial S\times \{1\}$}
\caption{Collapse a boundary annulus onto its core circle.}
\label{collapse}
\end{figure}

 \begin{figure}[ht]
\centering
\includegraphics[width=.7\textwidth,height=.3 \textwidth]{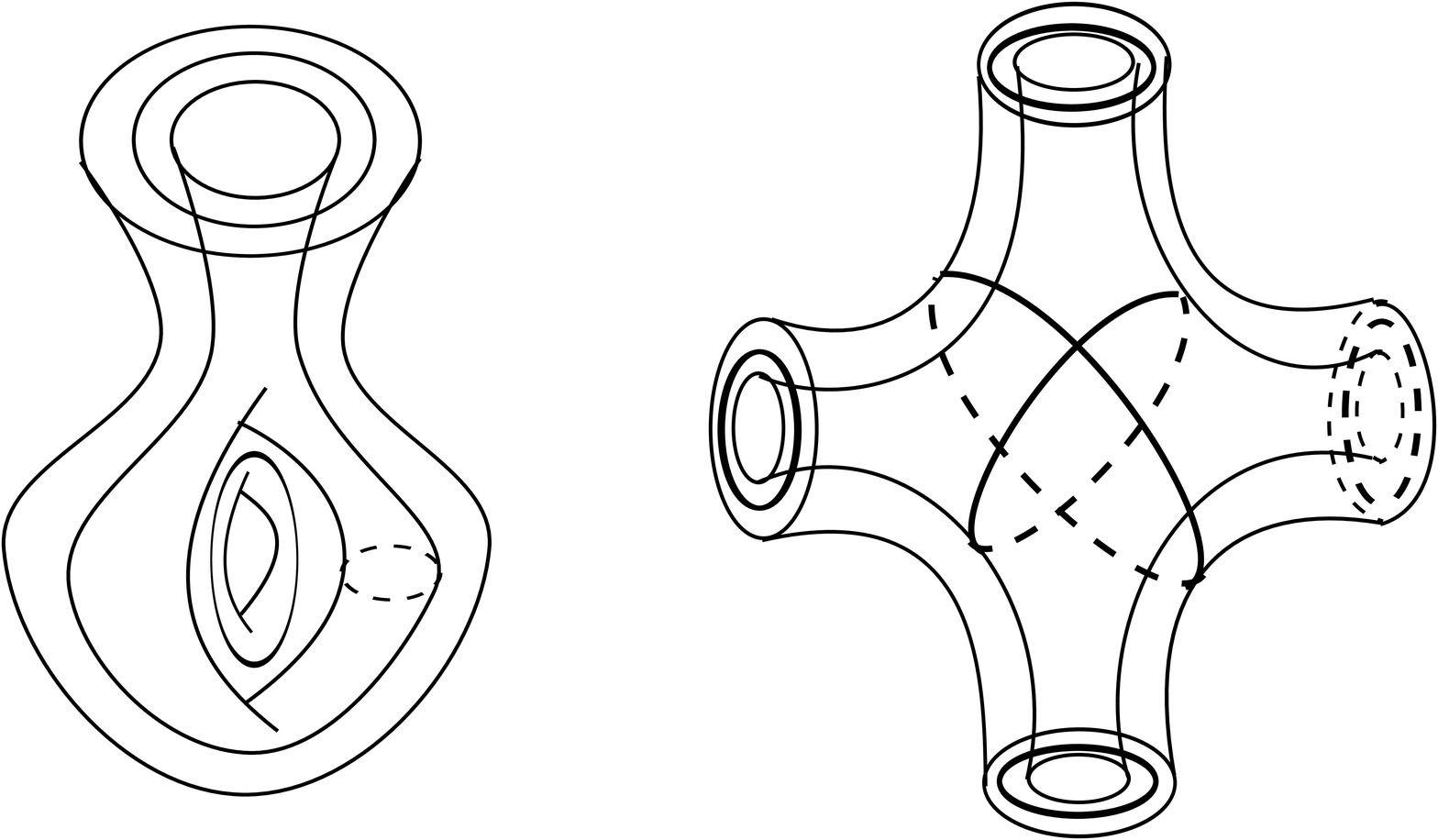}

\caption{Two fundamental pants blocks before collapsing boundary annuli.}
\label{pantsblock}
\end{figure}

Above is the rough idea of pants block. Yair Minsky ~\cite{Minsky} was the first one who introduced the idea of pants
blocks. Here is a precise definition:
\begin{Def}\label{defpantsblock}
 Let $S$ be a base surface of type (1,1) or type (0,4), i.e., $S=S_{1,1}$ or $S=S_{0,4}$. In $S\times [0,1]$, we say $S\times \{0\}$ 
 is the \textit{bottom surface} and $S\times \{1\}$ is the \textit{top surface}. For each component of $\partial S\times [0,1]$, we will collapse it to
 its deformation retract $\partial S\times \{\frac{1}{2}\}$ (see Figure \ref{collapse}).
 A \textit{pants block} is a handlebody
 $S\times [0,1]$ with a collection of essential loops in its boundary, forming a three dimensional object of one of two
 forms:

 (1) (1,1)-block: one loop is the deformation retract of the annulus $\partial S_{1,1}\times [0,1]$, and the other two loops are 
 contained
 in $S_{1,1}\times \{0\}$ and $S_{1,1}\times \{1\}$ so that the pants decompositions of top and bottom surfaces differ by 
 an S-move.

 (2) (0,4)-block: four loops are the deformation retract of components of $\partial S_{0,4}\times [0,1]$, and the other two loops 
 are contained in $S_{0,4}\times \{0\}$ and $S_{0,4}\times \{1\}$ so that the pants decompositions of top and bottom surfaces
  differ by an A-move.
\end{Def}

The above two pants blocks are called the \textit{fundamental blocks}.

\subsection{Pants-block decomposition}

The idea of a pants-block decomposition is to use two-dimensional pieces (pairs of pants)
with their boundaries (a link) to cut the manifold into three dimensional simple pieces (pants blocks).

\begin{Def}
 A \textit{pants-block decomposition}\index{pants-block decomposition} of a 3-manifold $M$ is a triple $(L, \mathcal{P}_L,\mathcal{B})$,
 for $L\subset M$ an embedded link, $\mathcal{P}_L$ a set of immersed pairs of pants whose interiors are pairwise 
 disjoint and embedded, with boundaries contained in $L$, and $\mathcal{B}$  a collection of embedded pants 
 blocks which are bounded by pairs of pants in $\mathcal{P}_L$ and whose union is all of $M$.
 In particular, gluing the blocks in $\mathcal{B}$ along pairs of pants in 
 $\mathcal{P}_L$ with boundaries of pants sent to $L$ gives us $M$.
\end{Def}

We denote $PB=(L,\mathcal{P}_L,\mathcal{B})$.
Note that there may be many choices of $L,\mathcal{P}_L$ and $\mathcal{B}$ for any given 3-manifold. Jesse Johnson ~\cite{Johnson} 
showed the following existence theorem for pants-block decompositions. He
called them model decompositions. In this paper we use the terminology pants-block decomposition for the same reason as 
we use pants decomposition.
\begin{Thm}\label{thm2}
Every compact, closed, hyperbolic 3-manifold admits a pants-block decomposition.
\end{Thm}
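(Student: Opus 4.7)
The plan is to reduce the existence of a pants-block decomposition to two ingredients: a Heegaard splitting of $M$, and connectedness of the pants graph. Since $M$ is closed and hyperbolic, it admits a Heegaard splitting $M = H_1 \cup_\Sigma H_2$ with $\Sigma$ of genus $g \geq 2$, so $\Sigma$ itself admits pants decompositions. Fix an initial pants decomposition $\mathcal{P}$ of $\Sigma$; the task is then to fill each handlebody $H_i$ by a stack of pants blocks whose bottom surface is $\Sigma$ carrying $\mathcal{P}$.

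For each $H_i$ I would choose a path $\mathcal{P} = \mathcal{P}_0 \to \mathcal{P}_1 \to \cdots \to \mathcal{P}_N$ in $G(\Sigma)$ whose terminal vertex $\mathcal{P}_N$ contains a disk system for $H_i$, meaning $g$ of its curves bound disjoint properly embedded disks in $H_i$. Since $G(\Sigma)$ is connected (Hatcher--Thurston) and every disk system of $H_i$ extends to a pants decomposition of $\Sigma$, such a path exists. Each edge in the path is then realized geometrically as an embedded pants block: an $S$-edge yields a $(1,1)$-block and an $A$-edge a $(0,4)$-block, seated inside a collar $\Sigma \times [t_k, t_{k+1}] \subset H_i$ and supported on the type-$(1,1)$ or type-$(0,4)$ subsurface where the move takes place, with the trivial product on the complementary subsurface absorbed into adjacent blocks. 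Stacking these realizes a collar $\Sigma \times [0,1] \subset H_i$ as a union of pants blocks whose top surface carries $\mathcal{P}_N$.

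The remaining step is to cap off $H_i \setminus (\Sigma \times [0,1])$ by pants blocks. Because $\mathcal{P}_N$ contains a disk system, cutting this region along the corresponding meridian disks reduces it to simpler pieces: collections of lower-genus handlebodies on which the same construction can be iterated, terminating eventually in pieces absorbed into boundary pants blocks. Gluing the two stacks in $H_1$ and $H_2$ along $\Sigma$ with common decomposition $\mathcal{P}$ then yields the desired triple $(L, \mathcal{P}_L, \mathcal{B})$, with $L$ the union of all pants-decomposition curves across the levels, $\mathcal{P}_L$ the corresponding pants, and $\mathcal{B}$ the collection of blocks.

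The main obstacle will be the capping step: the recursive decomposition of the complement must terminate with pieces fitting exactly the definitions in Section \ref{pb}, which requires careful coordination of the combinatorial path in $G(\Sigma)$ with a geometric spine of $H_i$. Hyperbolicity is used only weakly here, to guarantee $g \geq 2$ so that $\Sigma$ admits pants decompositions at all; the geometric features of the hyperbolic structure (short geodesics, Margulis tubes) do not enter the existence argument but would allow a geometrically natural choice of $L$, which is what originally motivated Minsky's introduction of pants blocks in \cite{Minsky}.
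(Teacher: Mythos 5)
The paper does not in fact prove this statement itself: Theorem \ref{thm2} is quoted from Johnson \cite{Johnson}, and the paper's own existence result is Corollary \ref{coro1}, proved by a different route, namely Alexander's open book theorem. There the complement of the binding is a surface bundle; cutting along one page gives a surface-cross-interval, Lemma \ref{lem1} converts an edge path in the pants complex into a pants-block decomposition of that product, and gluing top to bottom by the monodromy closes up the manifold. The point of that route is precisely that nothing is ever left over to cap off: every piece that appears is a surface-cross-interval, so no handlebody interiors need to be filled by blocks.

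Your Heegaard-splitting approach founders on exactly the step you flag as the ``main obstacle,'' and the gap is not a technicality. Once your path reaches a pants decomposition $\mathcal{P}_N$ containing a disk system, the part of $H_i$ beyond the collar becomes, after cutting along the meridian disks, a $3$-ball, and the intermediate stages of your proposed recursion are solid tori and balls. Spheres and tori admit no pants decompositions, so these pieces cannot be written as unions of $(1,1)$- or $(0,4)$-blocks: the recursion ``to lower-genus handlebodies'' has no base case inside the pants-block framework. Nor can such a piece be ``absorbed into boundary pants blocks'': by Definition \ref{defpantsblock} a block is $S_{1,1}\times I$ or $S_{0,4}\times I$ with prescribed boundary curves, and enlarging one by a ball or solid torus changes its topology, so the result is no longer a block of either type. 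Capping a handlebody by blocks would require some folding or self-gluing construction (analogous to how layered triangulations close off a solid torus), which you neither describe nor verify, so as written the argument does not go through. A smaller issue: before the collar construction yields a genuine block decomposition you must also handle curves that persist through the entire path, i.e.\ the maximum-annulus phenomenon treated in the discussion following Lemma \ref{lem1}, which your stacking step ignores.
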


\section{HLS relations}\label{sechls}
In ~\cite{HLS}, Hatcher, Lochak and  Schneps extended Hatcher-Thurston's pants graph to a two-dimensional complex called
the \textit{pants complex} and showed that it is simply-connected. We will use this complex to  define
path moves and P-moves, so we devote this section to explaining the required details.

\begin{Def}\label{hls2}
 The \textit{pants complex} \index{pants complex}  for a compact, orientable surface $S$  is a two dimensional cell
 complex whose 1-skeleton is the pants graph with two-dimensional faces of the following five patterns:
\end{Def}

 \begin{figure}[ht]
\centering
\includegraphics[width=.7\textwidth,height=.3 \textwidth]{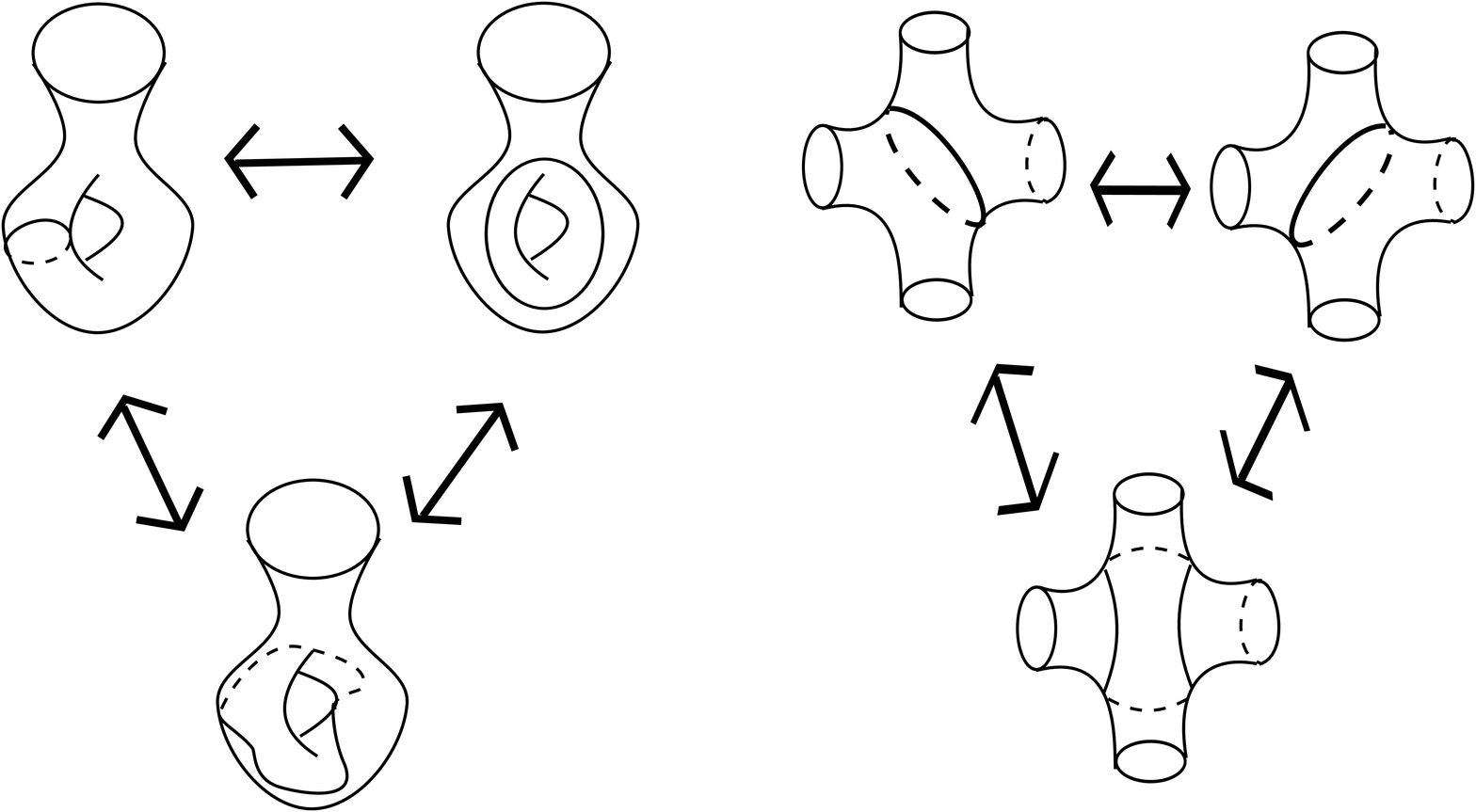}
\caption{S-triangle and A-triangle.}
\label{fig5}
\end{figure}

(3S) If three pants decompositions pairwise differ by S-moves as in Figure \ref{fig5}, on the same type (1,1) subsurface, 
we fill in the triangle in the pants complex with a 2-cell called an \textit{S-triangle}\index{S-triangle}. We call this the (3S)-relation,
following the terminology in the original paper ~\cite{HLS}.

(3A) If three pants decompositions pairwise differ by an A-move as in Figure \ref{fig5}, on the same type (0,4) subsurface, 
we call this 2-cell an \textit{A-triangle}\index{A-triangle}, and call this the (3A)-relation.

 \begin{figure}[ht]
\centering
\includegraphics[width=.5\textwidth,height=.3 \textwidth]{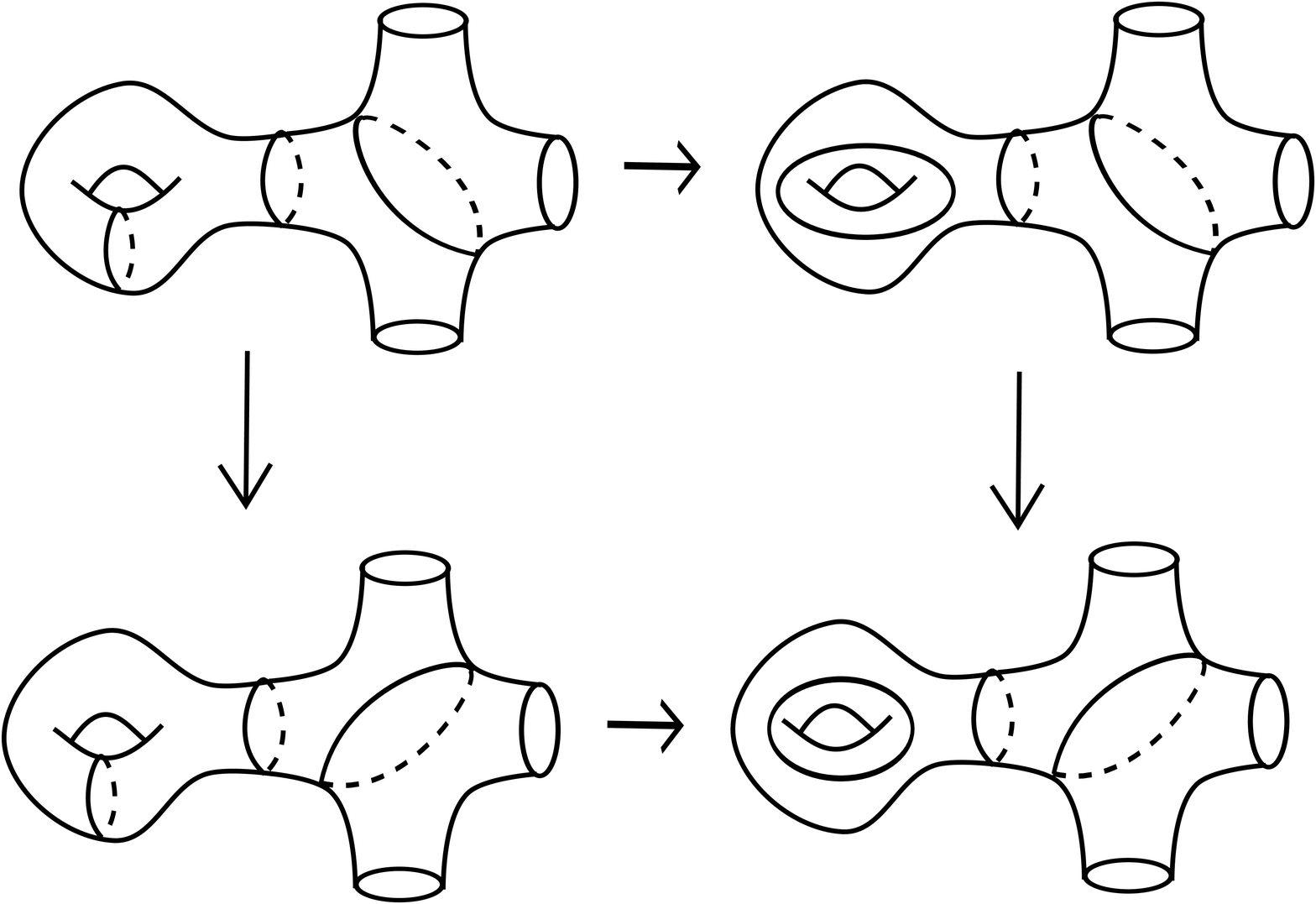}
\caption{Commutativity.}
\label{fig6}
\end{figure}

(C) If two moves are supported in disjoint subsurfaces of $S$, as in Figure \ref{fig6}, then they commute, and their
commutator forms a cycle of four moves. Here we only show one case, though there are two other cases involving S-moves
or A-moves only. We fill in this cycle with a quadrilateral and call this relation \textit{commutativity}\index{commutativity} or a (C)-relation.

 \begin{figure}[ht]
\centering
\includegraphics[width=.5\textwidth,height=.5 \textwidth]{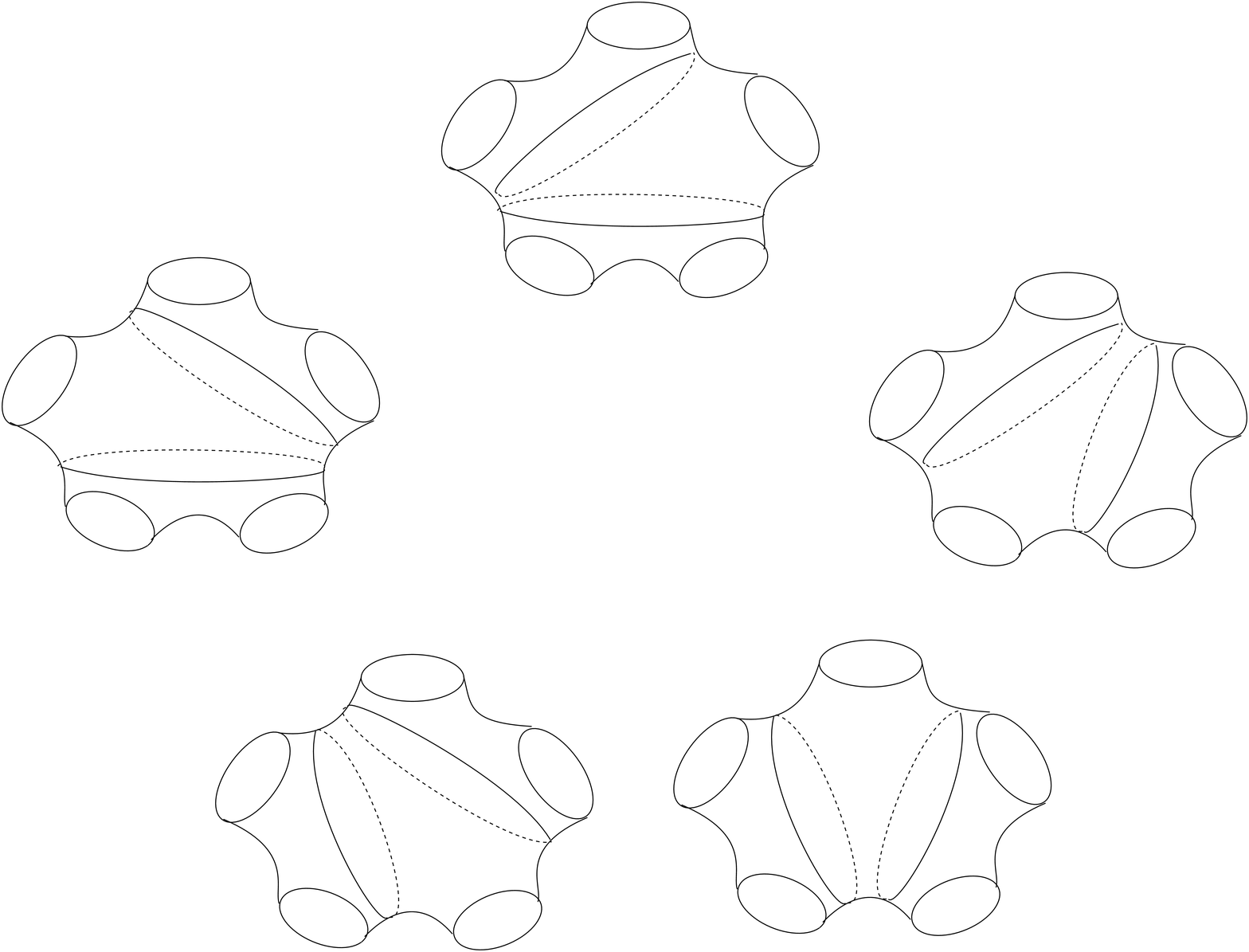}
\caption{A-pentagon.}
\label{fig7}
\end{figure}

(5A) Suppose deleting two loops from a pants decomposition creates a complementary component of type (0,5). On such
a type (0,5) subsurface, we need two essential simple closed curves to cut it into pairs of pants, and five
different ways to do that are as shown in Figure \ref{fig7}. Each one is related to the next one by an A-move, hence they form
 a cycle of five A-moves. We fill in this cycle with a 2-cell called an \textit{A-pentagon}\index{A-pentagon}
 and call this the (5A)-relation.

 \begin{figure}[ht]
\centering
\includegraphics[width=.8\textwidth,height=.3 \textwidth]{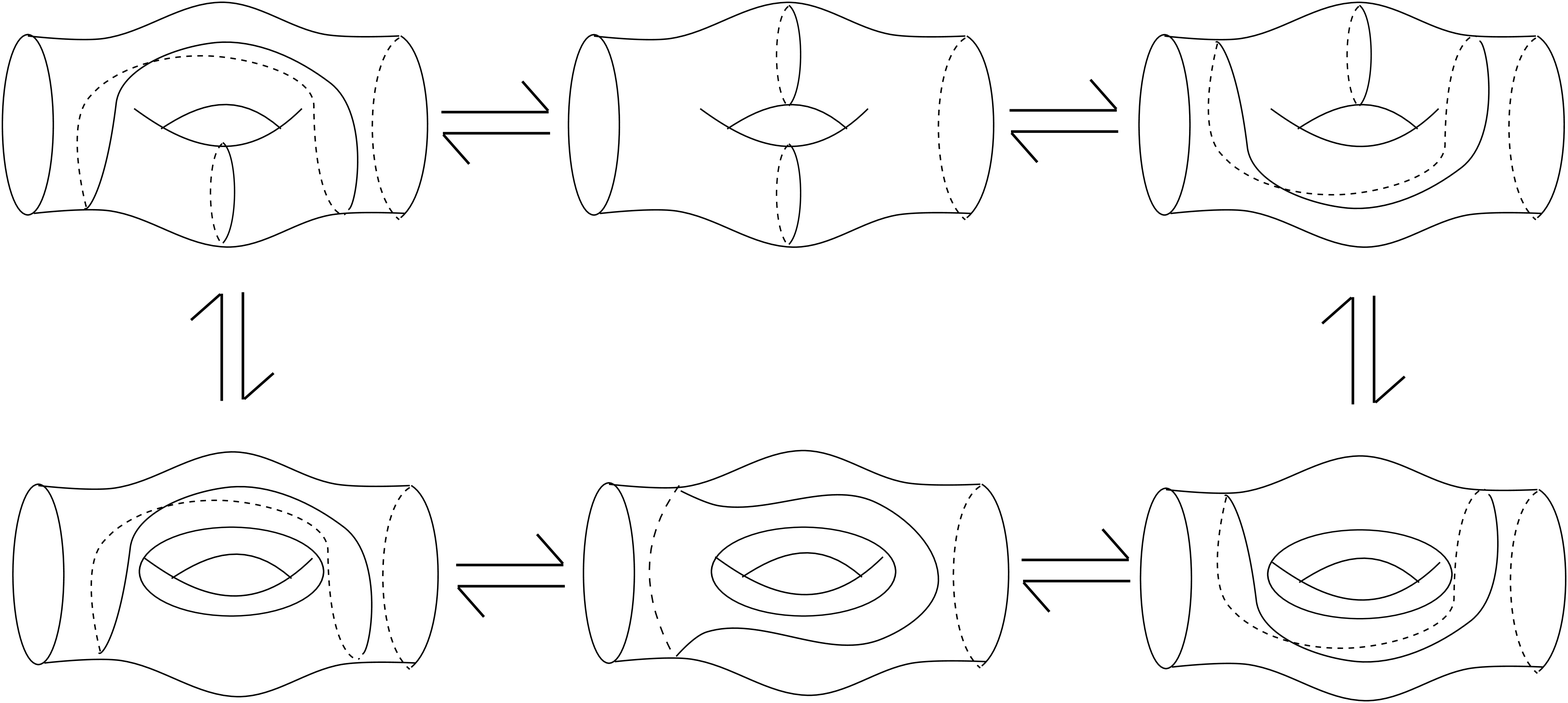}
\put(-200,100){$a_1$}
\put(-200,75){$a_1^{-1}$}
\put(-100,100){$a_2$}
\put(-100,75){$a_2^{-1}$}
\put(-200,5){$a_4$}
\put(-200,25){$a_4^{-1}$}
\put(-100,5){$a_3$}
\put(-100,25){$a_3^{-1}$}
\put(-30,55){$s_1$}
\put(-60,55){$s_1^{-1}$}
\put(-270,55){$s_2$}
\put(-240,55){$s_2^{-1}$}
\caption{Mixed hexagon. The labels are for a discussion on path moves in Section \ref{sec1}.}
\label{fig8}
\end{figure}

(6AS) Suppose deleting two loops from a pants decomposition creates a complementary component of type (1,2).
Six pants decompositions that form a relation for such a subsurface are shown in Figure \ref{fig8}.
This cycle contains four A-moves and two S-moves, where the two S-moves must be on the opposite edges of the hexagon.
 We fill in this cycle with a 2-cell called a \textit{mixed hexagon}\index{mixed hexagon} and call  this the (6AS)-relation.

We call these relations \textit{HLS relations}\index{HLS relations}, after Hatcher, Lochak and Schneps. 
They showed that the pants complex for $S$, with the 2-cells defined by
these relations, is simply-connected~\cite{HLS}. We will use these relations  in the next section
to introduce \textit{path moves} in the pants complex.

\section{Path moves and P-moves}\label{sec1}

In this section, we want to give more details on paths in the pants complex, which leads to the discussion of path moves and P-moves.
Throughout this paper, every edge path in the pants complex is finite. 

The theorem that the pants complex of a surface is simply-connected implies any two edge paths
joining two given vertices are related by sliding across a finite number of the polygons in Figure
\ref{fig9} of HLS relations, together with the trivial operation of inserting or deleting a  move followed by its inverse
(called a
\textit{cancelling pair}). We call
these operations \textit{path moves} in the pants complex.

 \begin{figure}[ht]
\centering
\includegraphics[width=.7\textwidth,height=.1 \textwidth]{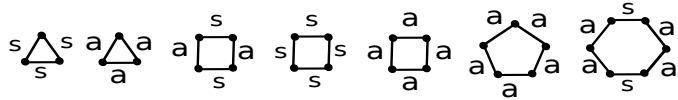}

\caption{HLS relations in polygon form}
\label{fig9}
\end{figure}

Consider the polygons defined by the HLS relations, which are shown  in Figure  \ref{fig9}.
In order to explain the idea of path moves clearly, we label each edge with a letter $s$ if it corresponds to an S-move,
and a letter $a$ if it corresponds to an A-move. An edge path is labelled by a sequence of letters. The notation 
$\leftrightarrow$ here means replacing one (sub)edgepath with the other if they have the same endpoints.
Furthermore, we use the notation $s^{-1}$ to represent the same S-move as $s$ but with the starting point and the  
ending point in the pants complex reversed. We also use $s_i$ and 
$a_j$ to represent different S-moves and A-moves if there are more than one S-move or A-move in an edgepath. 
Since the polygons in HLS relations are the fundamental 2-cells in pants complex, it suffices to discuss
the path moves based on these polygons. The following list with five categories is complete (up to rotations and reversed directions, 
$\emptyset$ is the empty set):

\begin{itemize}
  \item Cancelling pair: $ss^{-1}\leftrightarrow \emptyset$,\ \ \ \ $aa^{-1}\leftrightarrow \emptyset$
  
  \item S-triangle  \ \ \ \ \ \ \ \ \ \ \ \ \ \ \ \ \ \ \ \ \ \ \ \ \ \   A-triangle

  1-2 type: $s_1^{-1}\leftrightarrow s_2s_3$\ \ \ \ \ \ \ \ \ \ \ \ \ \ \ \ \ \ \ $a_1^{-1}\leftrightarrow a_2a_3$

  \item Commutativity
  \begin{itemize}
   \item 1-3 type: $s_1\leftrightarrow s_2s_1s_2^{-1}$, $a_1\leftrightarrow a_2a_1a_2^{-1}$, 
   
   \ \ \ \ \ \ \ \ \ \ \ \ \ $a\leftrightarrow sas^{-1}$, $s\leftrightarrow asa^{-1}$.

   \item 2-2 type: $a_1a_2\leftrightarrow a_2a_1$, $s_1s_2\leftrightarrow s_2s_1$, $sa \leftrightarrow as$.
  \end{itemize}
\item A-pentagon
\begin{itemize}
 \item 1-4 type: $a_1^{-1}\leftrightarrow a_2a_3a_4a_5$

 \item 2-3 type: $a_2^{-1}a_1^{-1} \leftrightarrow a_3a_4a_5$
\end{itemize}
\item Mixed hexagon
\begin{itemize}
 \item 1-5 type: $a_1^{-1}\leftrightarrow a_2s_1a_3a_4s_2$, $a_1 \leftrightarrow s_2^{-1}a_4^{-1}a_3^{-1}s_1^{-1}a_2^{-1}$, 
 
 \ \ \ \ \ \ \ \ \ \ \ \ \ $s_2^{-1}\leftrightarrow a_1a_2s_1a_3a_4$

 \item 2-4 type: $a_1a_2\leftrightarrow s_2^{-1}a_4^{-1}a_3^{-1}s_1^{-1}$, $a_1^{-1}s_2^{-1}\leftrightarrow a_2s_1a_3a_4$, 
 
 \ \ \ \ \ \ \ \ \ \ \ \ \ $s_2^{-1}a_4^{-1}\leftrightarrow a_1a_2s_1a_3$

 \item 3-3 type: $a_1a_2s_1\leftrightarrow s_2^{-1}a_4^{-1}a_3^{-1}$, $a_1^{-1}s_2^{-1}a_4^{-1}\leftrightarrow a_2s_1a_3$.
\end{itemize}
 \end{itemize}

With the above discussion, we want to see how to use edge paths in a pants complex to descibe pants-block decompositions.
To illustrate this connection, we present Lemma \ref{lem1}. 
\begin{Lem}\label{lem1}
 Given a pants complex of a surface, every edge path in which all of its pants decompositions have no loops in common
  defines a pants-block decomposition for a surface-cross-interval.
\end{Lem}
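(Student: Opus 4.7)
The plan is to realize the edge path as a stack of pants blocks inside $S \times [0,1]$ and then enlarge them so that their union tiles the entire product. First, place the pants decomposition $\mathcal{P}_i$ on the level surface $S \times \{t_i\}$ for $t_i = i/n$. For each pants move $\mu_i : \mathcal{P}_{i-1} \to \mathcal{P}_i$, which by definition is supported on a subsurface $\Sigma_i \subset S$ of type $(1,1)$ or $(0,4)$, take the slab $\Sigma_i \times [t_{i-1}, t_i]$ and collapse its lateral annuli $\partial \Sigma_i \times [t_{i-1}, t_i]$ to core circles at the middle height; by Definition \ref{defpantsblock} this is a fundamental pants block $B_i$ of the corresponding type, with the bottom and top pants of $\mathcal{P}_{i-1} \cap \Sigma_i$ and $\mathcal{P}_i \cap \Sigma_i$ on its boundary.

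The blocks $B_1, \ldots, B_n$ alone do not cover $S \times [0,1]$: the complement decomposes into trivial product slabs $P \times [t_{i-1}, t_i]$ coming from pairs of pants $P \subset S \setminus \Sigma_i$ common to $\mathcal{P}_{i-1}$ and $\mathcal{P}_i$. I will absorb each trivial slab into an adjacent pants block by vertically extending some $B_j$ as far as possible without the pants decomposition of $\Sigma_j$ changing; since the top and bottom pants decompositions of the extended block match those of the original $B_j$, the extended region is still a $(1,1)$- or $(0,4)$-block in the sense of Definition \ref{defpantsblock}.

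To verify coverage, fix $(p,t) \in S \times [0,1]$ and let $P$ be the pair of pants in the current pants decomposition containing $p$. The hypothesis that no loop is common to every $\mathcal{P}_i$ forces at least one boundary loop of $P$ to be changed somewhere in the path, so there is a closest pants move $\mu_j$, either forward or backward from $t$, whose moving loop lies in $\partial P$. Any such move is supported on the $(1,1)$- or $(0,4)$-subsurface containing that loop in its interior, and by inspection this subsurface necessarily contains $P$, so $\Sigma_j \supset P$ and the extended $B_j$ contains $(p,t)$. The link $L$ is then the union of the pants-decomposition loops of $\mathcal{P}_0$ and $\mathcal{P}_n$ at heights $0$ and $1$ together with the collapsed core circles at middle heights of the blocks, and $\mathcal{P}_L$ consists of the top and bottom pants of the blocks. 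The main obstacle I anticipate is the case where $\Sigma_j$ is a $(0,4)$-subsurface containing two pairs of pants with different lifetimes: an intermediate move may change a boundary loop of $\Sigma_j$ on the side not containing $p$, stopping the vertical extension of $B_j$ before the desired trivial slab is absorbed. Resolving this requires either assigning each trivial slab to the block whose spatial extent genuinely contains the persisting pants throughout the relevant interval, or allowing the two halves of a $(0,4)$-block to be enlarged asymmetrically in time, and this bookkeeping is the technical heart of the argument.
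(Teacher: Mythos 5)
Your construction starts the same way as the paper's (decompositions on level surfaces, a fundamental block for each move, trivial pants-cross-interval regions left over), but the step on which the whole lemma rests --- making the blocks' union equal to all of $S\times[0,1]$ --- is exactly the step you leave unfinished. The coverage argument as written fails in the situation you yourself flag: you locate the nearest move $\mu_j$ whose moving loop lies in $\partial P$ and conclude that the \emph{extended} $B_j$ contains $(p,t)$, but extending $B_j$ as a product over all of $\Sigma_j$ requires the pants decomposition on the \emph{entire} subsurface $\Sigma_j$ to be constant over the time interval in question, and an intermediate move supported elsewhere may change a boundary loop of the companion pants in $\Sigma_j$ without touching $\partial P$. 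In that case neither the block at $\mu_j$ nor the block at the other end of $P$'s lifetime can be extended (as a product over its full support) far enough, and a middle portion of the trivial slab $P\times[\,\cdot\,,\cdot\,]$ is covered by nothing. Your two proposed remedies (assigning slabs more carefully, or enlarging the two halves of a $(0,4)$-block asymmetrically) are plausible --- absorbing a pants-cross-interval along a single boundary pair of pants does preserve the homeomorphism type of the block --- but you do not carry either out, and you explicitly defer the bookkeeping that is ``the technical heart of the argument.'' So the proposal has a genuine gap rather than a complete proof.

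It is worth comparing with how the paper avoids this entirely. Instead of enlarging blocks, the paper collapses: it forms the union $X$ of all vertical annuli over persisting loops together with the level decompositions, observes that this cuts $S\times[0,n]$ into trivial blocks (pants cross interval) and genuine pants blocks, and then collapses each trivial block vertically to a single pair of pants and each annulus of $X$ to a loop. Because each trivial block and each annulus is collapsed independently, the ``different lifetimes'' problem never arises --- the asymmetric absorption you anticipate is done automatically and pants-by-pants by the quotient. The hypothesis that no loop is common to all decompositions is used there to rule out a maximum annulus, which is what guarantees the collapsed space $Y$ is still homeomorphic to $S\times[0,n]$, so the surviving nontrivial blocks form a pants-block decomposition of the product. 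If you want to salvage your expansion approach, you should either prove directly that gluing a trivial slab to a block along one of its boundary pairs of pants yields again a pants block and then give a well-defined assignment of every trivial slab (this is essentially re-deriving the paper's collapse in dual language), or simply adopt the vertical-collapse argument.
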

\begin{proof}
 Let $S$ be a base surface. Given a path $\{v_0,...,v_n\}$ in the pants complex for $S$,
consider $S\times [0,n]$. For each vertex $v_i$, choose a representative pants decomposition $P_i$ on $S\times \{i\}$ so that 
isotopic loops in $P_i$ and $P_{i+1}$ are setwise the same exact loop. Of course, for each pair of $P_i$ and $P_{i+1}$ there are 
two loops in $P_i$ and $P_{i+1}$ differ by a pants move by the definition of pants complex.
Let $\ell$ be a loop that is in both $P_i$ and $P_{i+1}$ and consider the annulus $A_{\ell,i}=\ell\times [i,i+1]$.
Let $X$ be the union of all these annuli (for all such $\ell$ and all $i$) plus all loops in $P_i$ for all $i$.

In the construction of $X$, two annuli are either pairwise disjoint or meet up with each other along their boundary 
loops forming longer annulus. Some annuli have boundary loops in $S\times \{0\}$ or $S\times \{n\}$, while some have 
boundary loops in the interior surface $S\times \{i\}$ for $1\leq i\leq n-1$. The loops in $X$ may be in the interior
or boundary of these annuli, or may be isolated. An annulus in $X$ 
is called a \textit{maximum annulus} if its boundaries are in both $S\times \{0\}$ and $S\times \{n\}$.
From the assumption we know that there is no maximum annulus in $X$.

Consider the union in $S\times [0,n]$ of the annuli in $X$ with $S\times \{i\}$ for all $i$. This union cuts $S\times [0,n]$ into 
blocks of the following forms: 1)(pair-of-pants)$\times [i,i+1]$ and 2) pants blocks before collapsing boundary annuli.
We will call the pants-cross-interval pieces \textit{trivial blocks}. Let $Y$ be the result of taking $S\times [0,n]$
and collapsing each trivial block vertically down to a pair of pants and each annulus in $X$ vertically down to a loop.

If two loops are in the same annulus of $X$, they are collapsed to the same loop in $Y$, so the image in $Y$ of $X$ 
defines a link $L$. If two pairs of pants are parts of the boundary of a 
trivial block in $S\times [0,n]$, they are collapsed to the same pair of pants in $Y$. Take the union $\mathcal{P}$
of pairs of pants in $Y$. The complement of $\mathcal{P}$ is the set of non-trivial blocks, which as noted above are pants blocks.
Since we assume  that there is no maximum annulus in $X$,
$Y$ is homeomorphic to $S\times [0,n]$. Thus we have constructed a pants-block decomposition for $S\times [0,n]$ defined by
the given edge path.
\end{proof}

If there is a maximum annulus $\ell$ in $X$ for a surface-cross-interval $S\times [0,n]$, then we can split $S$ into $S'$ and $S''$ along
$\ell$ and cut this surface-cross-interval into $S'\times[0,n]$ and $S''\times [0,n]$. If these two subsurface-cross-intervals both satisfy
the assumption of Lemma \ref{lem1}, then there exists two pants-block decompositions $(L_1,\mathcal{P}_{L_1}, \mathcal{B}_1)$ and $(L_2, 
\mathcal{P}_{L_2},\mathcal{B}_2)$ for the two subsurface-cross-intervals. Let $L=L_1\cup L_2\cup \{\ell\}$, $\mathcal{P}=\mathcal{P}_{L_1}
\cup \mathcal{P}_{L_2}$,
$\mathcal{B}=\mathcal{B}_1\cup \mathcal{B}_2$, then $(L,\mathcal{P}_L,\mathcal{B})$ is a pants-block decomposition for $S\times [0,n]$. If
there exists maximum annuli in the subsurface-cross-intervals, repeat the process above until all maximum annuli are gone. Since there 
are finitely many loops in a  pants decomposition of $S$, this process will stop eventually and we will obtain a pants-block decomposition
for $S\times [0,n]$.

Given a pants-block decomposition for a surface-cross-interval, we now explain how to generate a pants-block decomposition for a 3-manifold
with respect to it. 

\begin{Coro}\label{coro1}
 Every compact, connected, closed, orientable 3-manifold $M$ admits a pants-block decomposition.
\end{Coro}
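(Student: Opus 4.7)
My plan is to bootstrap from Lemma \ref{lem1} by realizing $M$ as a union of a surface-cross-interval and two ``cap'' pieces, each of which is itself decomposed into fundamental pants blocks. First I would choose a Heegaard splitting $M = H_1 \cup_{\Sigma} H_2$, stabilizing if necessary so that $\Sigma$ has genus $g \geq 2$ and thus admits pants decompositions. A collar neighborhood of $\Sigma$ inside $M$ gives a thickened region $\Sigma \times [0,N]$, and I would apply Lemma \ref{lem1} (together with the maximum-annulus splitting trick described immediately after its proof) to a carefully chosen edge path in the pants complex of $\Sigma$ to produce a pants-block decomposition of this middle slab.

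To handle each handlebody $H_i$, I would use a sweep-out: a Morse function on $H_i$ whose regular level sets are surfaces obtained from $\Sigma$ by successive compressions, and whose index-2 critical points correspond one-by-one to compression disks in $H_i$. Between consecutive critical levels the sweep-out produces another surface-cross-interval piece to which Lemma \ref{lem1} applies directly, yielding interior pants blocks and pants pieces. Gluing the three resulting local decompositions -- one from $\Sigma \times [0,N]$ and one from each handlebody sweep-out -- along the pants decompositions they induce on the common boundary surfaces produces a triple $(L,\mathcal{P}_L,\mathcal{B})$ that covers all of $M$.

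The main obstacle is that the local model near a single compression is not intrinsically a (1,1)-block or a (0,4)-block: a 2-handle attached along one curve of a pants decomposition collapses a pair of pants rather than performing an S-move or an A-move. I would address this by arranging the sweep-out so that each compression curve is contained in a type (1,1) or type (0,4) subsurface of the preceding level, in which the compression, together with the adjacent trivial block, fits inside a fundamental pants block whose ``top face'' has a cuff bounding a meridian disk. Equivalently, I would group the critical level with the adjacent collar into a genuine (1,1)- or (0,4)-block by choosing the pants decomposition of $\partial H_i$ so that each meridian in the chosen disk system appears as a cuff inside one of these subsurfaces. Once the local models match, the construction glues as in Lemma \ref{lem1}, and routine checking confirms that the resulting pairs of pants share their boundary curves along the link $L$, as required by the definition of a pants-block decomposition.
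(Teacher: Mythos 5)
Your Heegaard-splitting route has a genuine gap exactly at the point you flag yourself, and the proposed fix does not work. Both fundamental blocks of Definition \ref{defpantsblock} are, up to collapsing boundary annuli, products $S'\times[0,1]$ with $S'=S_{1,1}$ or $S_{0,4}$: their top and bottom surfaces are homeomorphic, and every decorating curve (cuff or collapsed boundary circle) is essential in the block, so no cuff bounds a disk inside a block. Consequently, the slab of a handlebody sweep-out that contains an index-2 critical point can never be assembled from fundamental blocks and trivial blocks, no matter how you choose the cuffs: passing the critical level strictly changes the topology of the level surface, whereas a stack of pants blocks never does. Arranging "a cuff bounding a meridian disk inside the top face of a block" is precisely what the block structure forbids. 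Beyond this, the deepest part of each handlebody (near the spine, where level surfaces degenerate to spheres, disks, or tori) admits no pants decomposition at all, and you would also need a notion of pants-block decomposition with boundary plus a matching condition along $\partial H_i$, none of which is routine.

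The paper sidesteps handlebodies entirely, and you could too: by Alexander's theorem $M$ has an open book decomposition; cutting the binding complement along one page gives a surface-cross-interval, Lemma \ref{lem1} (with the maximum-annulus splitting you cite) gives it a pants-block decomposition, and one then glues top to bottom by the monodromy $h$, choosing the bottom pants decomposition to be the $h$-image of the top one. The binding is absorbed automatically, because the collapsed boundary annuli of the blocks become the binding circles, which simply join the link $L$. So the middle-slab part of your argument is sound and close in spirit to Lemma \ref{lem1}, but the capping-off of the two handlebodies is not repaired by your construction; replacing the Heegaard splitting with an open book (or otherwise presenting $M$ without pieces in which cuffs must compress) is the missing idea.
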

\begin{proof}
By a theorem of Alexander ~\cite{Alexander}, $M$ admits an open book decomposition. The complement of the binding of the open book
decomposition is a surface bundle. Cut this complement along one page of this open book decomposition to obtain a surface-cross-interval.
Let $S$ be the base surface of this surface-cross-interval. Choose pants decompositions for the top surface and the bottom surface
respectively. Then by Lemma \ref{lem1}, this surface-cross-interval admits a pants-block decomposition. Let $h: S\to S$ be a monodromy
that maps the curves in the pants decomposition of the top surface to the curves in the pants decomposition of the bottom 
surface setwise. Glue the top and bottom surfaces under the monodromy $h$. This defines a pants-block decomposition for $M$.
\end{proof}

A 3-manifold may admit many different pants-block decompositions up to isotopy. We next define the relations between
different pants-block decompositions of a 3-manifold.

Given a surface-cross-interval $S\times [0,1]$ which contains only three different pants decompositions. Say $S\times \{0\}$ admits
$\mathcal{P}_0$, $S\times \{\frac{1}{2}\}$ admits $\mathcal{P}_1$ and $S\times \{1\}$ admits $\mathcal{P}_2$. If $\mathcal{P}_0$ and
$\mathcal{P}_1$ differ by an S-move (resp. A-move), $\mathcal{P}_1$ and  $\mathcal{P}_2$ differ by an S-move (resp. A-move), and
$\mathcal{P}_2$ is isotopic to $\mathcal{P}_0$, then these two S-moves (resp. A-moves) must be inverse of each other, and we call the two 
induced pants blocks an \textit{invertible pair}.

\begin{Def}\label{defpmove}
 Let $M$ be a 3-manifold, $PB_0=(L_0, \mathcal{P}_{L_0}, \mathcal{B}_0)$ and $PB_1=(L_1, \mathcal{P}_{L_1}, \mathcal{B}_1)$ are two 
 pants-block decompositions of $M$. We say  $PB_0$ and $PB_1$ differ by a P-move if one  of the following conditions holds:
 \begin{enumerate}
  \item $L_0\subset L_1$, $\mathcal{P}_{L_0}\subset \mathcal{P}_{L_1}$, $\mathcal{B}_0\subset \mathcal{B}_1$, and $PB_1$ contains only two
  more adjacent pants blocks such that they are an invertible pair. If this is the case, inserting or deleting an invertible pair is 
  a P-move turning one pants-block decomposition into the other.
  
  \item There  exists two collections of adjacent  pants blocks $A_0$ in $PB_0$ and $A_1$ in $PB_1$ respectively, with the same base surface. 
  The base surface, a subset of $\mathcal{P}_{L_i}$, could be $S_{1,1}, S_{0,4}, S_{0,5}$ or $S_{1,2}$.
  Outside  of $A_i$ in $PB_i$ are isotopic to each other for $i=0,1$. $A_0$ and $A_1$ can be viewed as surface-cross-intervals. Assume that
  their top surfaces with pants decompositions are isotopic to each other, and so are the bottom surfaces.
  Let $K$ be the collection of surfaces with different pants decompositions in $A_0$ and $A_1$. All of the pants decompositions in $K$ 
  together with the pants moves between them form a loop described as one of the HLS relations (except the commutativity). If this is the 
  case, replacing $A_0$ by $A_1$ (or the other way around) is a P-move between $PB_0$ and $PB_1$.
 \end{enumerate}
\end{Def}

Choose two edge paths in the pants complex such that they differ by a path move in the list. Lemma \ref{lem1} says each edge
path defines a pants-block decomposition. A path move slides one (part of) edge path across an HLS polygon or adds/deletes a cancelling
pair to obtain another edge path. This implies a move between the two corresponding pants-block decompositions. Thus by Definition \ref{defpmove},
each P-move is induced by a path move, though not every path move induces a P-move, so the list of P-moves is 
a sublist of path moves. In order to have a complete list of P-moves,  we need to do the following:
(1) find out which path move doesn't induce a P-move,
(2) discuss the conjugacy class of path moves, and keep only one representative for each conjugacy class in the list, 
(3) show that the rest of path moves in the list induce P-moves.

\begin{Lem}\label{lem3}
 All path moves generated by the commutativity relations do not contribute to any P-moves.
\end{Lem}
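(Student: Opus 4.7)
The plan is to split the commutativity path moves into the 2-2 and 1-3 subcases and use the pants-block decomposition construction from Lemma \ref{lem1} to show that in each case the change in pants-block decomposition is either trivial or identical to one already produced by a cancelling pair path move. The common structural input is that in both subcases the two pants moves involved are supported in disjoint subsurfaces $S'$ and $S''$ of the base surface $S$.

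For the 2-2 subcase, say $a_1 a_2 \leftrightarrow a_2 a_1$, I would apply Lemma \ref{lem1} to each of the two length-2 edge paths in $S\times [0,2]$. The loops not moved by either $a_1$ or $a_2$ extend as annuli through every level, and each move contributes a pants block in its own subsurface while the complementary subsurface contributes only a trivial block at that level. After collapsing trivial blocks, both paths produce the same collection of pants blocks, namely one on $S'$ and one on $S''$, meeting the same pants boundaries. Hence the two pants-block decompositions are isotopic and no P-move is induced.

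For the 1-3 subcase, say $s_1 \leftrightarrow s_2 s_1 s_2^{-1}$, the short path yields a single $s_1$-block on $S'$, while the long path in $S\times[0,3]$ yields three blocks: an $s_2$-block and an $s_2^{-1}$-block on $S''$ flanking an $s_1$-block on $S'$. Because $s_1$ and $s_2$ have disjoint support, the slice $S''\times [1,2]$ lying between the two S-moves on $S''$ is a trivial block and collapses to a single pair of pants, and the slices $S'\times [0,1]$ and $S'\times [2,3]$ similarly collapse to pants. After collapsing, the $s_2$-block and $s_2^{-1}$-block become adjacent in $S''$ along exactly one pair of pants, and the invertible-pair criterion stated just before Definition \ref{defpmove} applies: the top pants decomposition of the $s_2^{-1}$-block is isotopic to the bottom of the $s_2$-block. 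So the long-path decomposition is obtained from the short-path decomposition by inserting a single invertible pair on $S''$ while the $s_1$-block stays untouched, which is a type-(1) P-move already induced by a cancelling pair path move (applied to the trivial edge path on $S''$). Hence this subcase contributes no new entry either.

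The main obstacle is the careful bookkeeping in the 1-3 case: one must verify that after collapsing the trivial $S''\times[1,2]$ slice the $s_2$-block and $s_2^{-1}$-block meet along exactly one pair of pants, and that their combined top and bottom pants decompositions satisfy the invertible-pair criterion. Once this is checked, the equivalence with a cancelling pair insertion is immediate, and the 2-2 analysis is a simpler instance of the same bookkeeping.
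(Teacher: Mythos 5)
Your proposal is correct, and for the 2-2 type it coincides with the paper's argument: the two moves are supported in disjoint subsurfaces, so $as$ and $sa$ yield the same collection of pants blocks and the decompositions are isotopic. Where you go further is the 1-3 type. The paper's proof treats only the $as\leftrightarrow sa$ example and simply asserts that all commutativity-generated path moves (including 1-3 types such as $s_1\leftrightarrow s_2s_1s_2^{-1}$) give no P-move; it does not address the fact that the long path produces two extra blocks. Your reduction does address this: after collapsing the trivial slices, the flanking $s_2$- and $s_2^{-1}$-blocks on $S''$ form an invertible pair in the sense of the paragraph preceding Definition \ref{defpmove}, so the long-path decomposition differs from the short-path one only by inserting an invertible pair, i.e.\ by a type-(1) P-move already accounted for by the cancelling-pair move. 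This buys a genuinely more complete justification that the commutativity relations add nothing new to the P-move list, which is the paper's intended conclusion but is left implicit in its proof. One small bookkeeping remark: your phrase ``meet along exactly one pair of pants'' is correct for the S-move instance you chose, but in the A-move instances of the 1-3 type (e.g.\ $a_1\leftrightarrow a_2a_1a_2^{-1}$) the two flanking $(0,4)$-blocks are glued along two pairs of pants, since cutting $S_{0,4}$ along one essential loop gives two pairs of pants; the invertible-pair conclusion is unaffected, but the statement should be phrased to cover both cases.
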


\begin{proof}
 A path move generated by a commutativity relation means there are two pants moves acting on a pants decomposition of 
 a surface one right after the other, and these two pants moves happen on different subsurfaces. 
 Take $as\leftrightarrow sa$ as an example. This implies that the order of the pants moves is important because they represent
 different edgepaths. However, $as$ and $sa$ both give the same collection of pants blocks in the pants-block decomposition because the two 
 pants moves happen on different subsurfaces so the two pants blocks do not overlap. Thus no P-move happens.
\end{proof}

Lemma \ref{lem3} rules out the possibilities that any 1-3 types or 2-2 types of path moves can be P-moves because they
are all generated by the commutativity relations. We next want to show that how to further narrow down the list of P-moves.

Two path moves $A$ and $B$ in the same category of the path moves list are \textit{conjugate} to each other if one can be obtained
by adding some cancelling pairs to another and then deleting same letters with same order from both sides of the move.
For example, let $A$ be the 2-3 type path move $a_2^{-1}a_1^{-1}\leftrightarrow a_3a_4a_5$, and $B$ be the 1-4 type path move
$a_1^{-1}\leftrightarrow a_2a_3a_4a_5$. Add $a_2a_2^{-1}$ to the left side of $B$, then delete $a_2$ from both 
sides since they are both at the beginning of the paths and don't affect the change. Thus we obtain the 2-3 type move $A$.
Similarly, $B$ can be obtained by adding a cancelling pair $a_2^{-1}a_2$ to the right side of $A$ and then deleting the same $a_2^{-1}$ 
from both sides. Note that we need to keep one representative
in the P-moves list, so we choose the 2-3 type path move. In general, we should keep one path move for each conjugacy class
from each category (not including
the commutativity category) of the path move list in the P-moves list. One category could have more than one conjugacy class.
One important thing we want to point out is that this action of conjugacy keeps the total number
of edges in a path move, therefore two path moves are not conjugate if they are in different categories.

The two path moves in the cancelling pair category cannot be obtained from each other by the above procedure,
so they are not conjugate. Therefore we have both of them in the P-moves list.
The two path moves of 1-2 types (in the triangle category) are both in the P-moves list for the same reason. As for the A-pentagon
category, by the above discussion, there is only one conjugacy class and we choose the 2-3 type as the representative in the P-moves list.
For the last category, Lemma \ref{lem4} shows that there is only one conjugacy class, so we choose a 3-3 type $a_1^{-1}s_2^{-1}a_4^{-1}\leftrightarrow
a_2s_1a_3$ to be the representative. 

\begin{Lem}\label{lem4}
 Any two path moves in the mixed hexagon category of the path moves list are conjugate. In particular, each path move can be obtained
 by adding no more than two cancelling pairs to another path move. 
\end{Lem}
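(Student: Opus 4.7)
The plan is to parametrize each path move in the mixed hexagon category by its \emph{split}: the unordered pair of vertices at which the hexagonal 2-cell is cut into the two sides of the move. Label the six boundary vertices $v_0, v_1, \ldots, v_5$ so that the edges in cyclic order are $a_1, a_2, s_1, a_3, a_4, s_2$, with $a_1$ going $v_0 \to v_1$, $a_2$ going $v_1 \to v_2$, and so on. Each of the eight listed moves then corresponds to a specific split of the hexagon, with the LHS and RHS being the two boundary arcs joining those two vertices (possibly traversed backwards).

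The main tool is a \emph{shift operation} that uses exactly one cancelling pair to move one endpoint of the split by a single vertex along the hexagonal cycle. Given a move $U \leftrightarrow V$ with $V = e \cdot V'$ starting with some edge $e$ at vertex $p$, one adjoins the cancelling pair $e \, e^{-1}$ to the beginning of $U$, producing $e\, e^{-1} U \leftrightarrow e\, V'$, and then deletes the common initial letter $e$ from both sides to obtain $e^{-1} U \leftrightarrow V'$. This is precisely the kind of conjugation used in the example preceding the lemma. The resulting move is still supported on the same hexagon, but the split endpoint $p$ has been replaced by the adjacent vertex (the other end of the edge $e$). Shifts that change the opposite endpoint or that move in the reverse cyclic direction are defined analogously and each costs one cancelling pair.

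With this setup, I would verify by direct case analysis that the eight listed moves are pairwise connected by short chains of shifts. Because a split is an unordered pair of vertices on a $6$-cycle, one may match the endpoints of the source move with those of the target move in either order; combined with the freedom to shift in either cyclic direction around the hexagon, this flexibility ensures that each listed move lies within two shifts of some other listed move. In particular, starting from the central representative $a_1^{-1} s_2^{-1} a_4^{-1} \leftrightarrow a_2 s_1 a_3$ (the $3$-$3$ type at split $\{v_1, v_4\}$), each of the remaining seven listed moves is reached by at most two shifts, which yields the claimed bound on cancelling pairs. Transitivity of conjugacy then places all eight moves into a single conjugacy class.

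The main obstacle is the combinatorial bookkeeping: for each of the eight listed moves one must identify an appropriate initial letter of the opposite side, check that the adjoined cancelling pair really does cancel as claimed after the deletion step, and confirm that the resulting word matches another listed move on the nose rather than some variant that differs by a relabeling or a direction reversal. Once this routine but careful case check is dispatched, the conjugacy of all eight moves and the quantitative bound of at most two cancelling pairs follow immediately.
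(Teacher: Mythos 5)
Your shift-operation framework is essentially the paper's own method: each of the paper's case-by-case derivations is exactly one such shift (insert a cancelling pair at the head or tail of one side, delete the now-common letter from both sides), and chaining shifts through intermediate listed moves is how the paper proves the lemma. However, your quantitative claim contains a genuine error, and its source is the decision to treat the split as an \emph{unordered} pair of vertices. A shift can only move the common \emph{starting} vertex or the common \emph{ending} vertex of the two sides; since the inserted letter must match a letter already at the head (or tail) of the other side, each shift moves that endpoint along one hexagon edge and the direction of traversal is preserved throughout. So the relevant invariant is the \emph{ordered} split $(\text{start},\text{end})$, and you are not free to ``match the endpoints of the source move with those of the target move in either order'': for instance $a_1^{-1}\leftrightarrow a_2s_1a_3a_4s_2$ and $a_1\leftrightarrow s_2^{-1}a_4^{-1}a_3^{-1}s_1^{-1}a_2^{-1}$ have the same unordered split $\{v_0,v_1\}$ but opposite orientations, and no small number of cancelling pairs turns one into the other.

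Concretely, with $H_0\colon a_1^{-1}s_2^{-1}a_4^{-1}\leftrightarrow a_2s_1a_3$ having ordered split $(v_1,v_4)$, the $2$-$4$ type $a_1a_2\leftrightarrow s_2^{-1}a_4^{-1}a_3^{-1}s_1^{-1}$ has ordered split $(v_0,v_2)$, at shift distance $1+2=3$ from $H_0$, and the $1$-$5$ type $a_1\leftrightarrow s_2^{-1}a_4^{-1}a_3^{-1}s_1^{-1}a_2^{-1}$ has ordered split $(v_0,v_1)$, at distance $1+3=4$. So your assertion that ``each of the remaining seven listed moves is reached by at most two shifts'' from $H_0$ is false, and the case analysis you defer would fail exactly at these two moves; this is why the paper routes them through intermediate moves (two pairs to $a_1a_2s_1\leftrightarrow s_2^{-1}a_4^{-1}a_3^{-1}$, then one more pair each). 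The lemma's bound of ``no more than two cancelling pairs'' is measured to \emph{some other} listed move, not to the fixed representative $H_0$; the correct conclusion, which your ordered-split distance would in fact deliver once the unordered-pair assumption is dropped, is that the eight moves form a connected chain in which each move is at most two shifts from its nearest listed neighbour, while being up to four shifts from $H_0$ itself.
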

\begin{proof}
 We first choose a 3-3 type $H_0:a_1^{-1}s_2^{-1}a_4^{-1}\leftrightarrow a_2s_1a_3$ and show that other moves can be 
 obtained by adding some cancelling pairs to this move. 
 
 (1) For $H_1:a_1a_2s_1\leftrightarrow s_2^{-1}a_4^{-1}a_3^{-1}$, first add $a_1^{-1}a_1$ to the head of the right side of $H_0$, delete 
 $a_1^{-1}$, then add $a_3^{-1}a_3$ to the tail of the left side of $H_0$, and delete $a_3$. Switch the left and right sides.
 
 (2) For $H_2:a_1^{-1}s_2^{-1}\leftrightarrow a_2s_1a_3a_4$, add $a_4a_4^{-1}$ to the tail of the right side of $H_0$, delete $a_4^{-1}$.
 
 (3) For $H_3:a_1^{-1}\leftrightarrow a_2s_1a_3a_4s_2$, it can be obtained by adding $s_2s_2^{-1}$ to the tail of the right side of $H_2$,
 and delete $s_2^{-1}$. Thus it is at most two steps from $H_0$.
     
 (4) For $H_4:a_1a_2\leftrightarrow s_2^{-1}a_4^{-1}a_3^{-1}s_1^{-1}$, it can be obtained by adding $s_1^{-1}s_1$ to the tail of the right side 
 of $H_1$, and delete $s_1$. Thus it is at most three steps from $H_0$.
     
 (5) For $H_5:s_2^{-1}a_4^{-1}\leftrightarrow a_1a_2s_1a_3$, it can be obtained by adding $a_1^{-1}a_1$ to the head of the right
 side of $H_0$, and delete $a_1^{-1}$.
 
 (6) For $H_6=a_1 \leftrightarrow s_2^{-1}a_4^{-1}a_3^{-1}s_1^{-1}a_2^{-1}$, it can be obtained by adding $a_2^{-1}a_2$ to
 the tail of the right side of $H_4$, and delete $a_2$. Thus it is at most four steps from $H_0$. 
 
 (7) For $H_7=s_2^{-1}\leftrightarrow a_1a_2s_1a_3a_4$, it can be obtained by adding $a_4a_4^{-1}$ to the tail of the right side of $H_5$, and 
 delete $a_4^{-1}$. Thus it is at most two steps from $H_0$.
 
 From the construction above we can see that each $H_i$ can be obtained by adding no more than two cancelling pairs
 to $H_j$ for some $j\neq i$.
\end{proof}

Therefore our list of P-moves is as follows:

\begin{itemize}
  \item Cancelling pair: $ss^{-1}\leftrightarrow \phi$,\ \ \ \ $aa^{-1}\leftrightarrow \phi$
  
  \item S-triangle  \ \ \ \ \ \ \ \ \ \ \ \ \ \ \ \ \ \ \ \ \ \ \ \ \ \   A-triangle

  1-2 type: $s_1^{-1}\leftrightarrow s_2s_3$\ \ \ \ \ \ \ \ \ \ \ \ \ \ \ \ \ \ \ $a_1^{-1}\leftrightarrow a_2a_3$

\item A-pentagon
\begin{itemize}

 \item 2-3 type: $a_2^{-1}a_1^{-1} \leftrightarrow a_3a_4a_5$
\end{itemize}
\item Mixed hexagon
\begin{itemize}

 \item 3-3 type:  $a_1^{-1}s_2^{-1}a_4^{-1}\leftrightarrow a_2s_1a_3$.
\end{itemize}

 \end{itemize}

\section{Morse 2-functions and the Reeb complexes}\label{secmorse}
The main argument of this paper will involve constructing pants block decompositions from  Morse 2-functions. 
This is a natural generalization of constructing pants decompositions from Morse functions on surfaces, which we will present in 
Section \ref{morsereeb}. This chapter is devoted to a brief introduction to the theory of Morse 2-functions
 and singularities, and their connections with Reeb complexes (Section \ref{secmorse2}) and the induced moves
 (Section \ref{singularities}). Gay and Kirby studied Morse 2-functions for $n$-manifolds in \cite{GK}.  Kobayashi and Saeki studied
 the theory for 3-manifolds in \cite{KS} with different terminology.
 
\subsection{Local behavior  of Morse functions}\label{locmorse}
 We first recall some background knowledge for Morse functions.
 Let $M$ be a smooth compact $n$-manifold. Here we only focus on $n=2,3$.
 Given a point $p$ in $M$ and a smooth function $f$, the \textit{gradient} of $f$ at $p$ is the vector defined by
 the partial derivatives of $f$ at $p$. We say $p$ is a \textit{critical} point of $f$ if the gradient of $f$ at $p$ is zero.
 The \textit{Hessian} of $f$ at $p$ is the matrix of second derivatives of $f$ at $p$. A critical point $p$ is 
 \textit{non-degenerate} if the determinant of the Hessian is non-zero. The image $f(p)$ is the \textit{critical value}
 of $p$ under $f$. If the Hessian of $f$ at $p$ is indefinite, then $p$ is a \textit{saddle point}.
 \begin{Def}
 A \textit{Morse function}\index{Morse function} is a smooth function $f:M\to \mathbb{R}$ that satisfies the following
 two conditions:
\begin{enumerate}
 \item Every critical point is non-degenerate,
 
 \item $f$ maps different critical points to different critical values.
\end{enumerate}

 If $f$ only satisfies condition (1) then we say $f$ is \textit{locally Morse} but not Morse.
\end{Def}

A Morse function is a special case of a more general notion called a \textit{stable map} (Morse
functions are stable maps on $\mathbb{R}$). We won't descibe stable maps in their full generality here. See ~\cite{BdR} for a definition. 

Given two Morse functions $f_0$ and $f_1$, consider a homotopy $\{f_t\}_{0\leq t\leq 1}$ from $f_0$ to $f_1$. The \textit{Cerf graphic} $G$ of this homotopy
is the subset
in $[0,1]\times [0,1]$, such that the intersection of $G$ and $t\times [0,1]$ are the critical values of $f_t$ for all $t\in [0,1]$.

The following two remarks recall some results of Cerf theory, which is useful for our description of singularities of Morse functions 
and Morse 2-functions.

\begin{Rmk}\label{rmkstra}
 In the space $\mathcal{F}$ of all smooth real-valued functions on a compact manifold $M$ (see Cerf ~\cite{Cerf}), we can stratify 
 $\mathcal{F}$ using the codimensions of functions $f\in \mathcal{F}$,  which  we will define below. Let $\mathcal{F}^i$ denote the set of functions of codimension $i$.
 For example, let $p$ be an isolated critical point of $f$. We 
 say $p$ is of \textit{codimension 0} if it has a neighbourhood such that the function has a canonical form:
 \begin{equation}\label{nondeg}
 -x_1^2-...-x_i^2+x_{i+1}^2+...x_n^2+x_{n+1}^2
\end{equation}
Such a point $p$ is non-degenerate, and we called it a \textit{fold point}. 
$p$ is of \textit{codimension 1} if it has a neighbourhood such that the function has a canonical form:
 \begin{equation}\label{eqcusp}
 -x_1^2-...-x_i^2+x_{i+1}^2+...x_n^2+x_{n+1}^3
\end{equation}
We call such a point a \textit{cusp point}.
$p$ is of \textit{codimension 2} if it has a neighbourhood such that the function has a canonical form:
 \begin{equation}\label{eqswallowtail}
 -x_1^2-...-x_i^2+x_{i+1}^2+...x_n^2\pm x_{n+1}^4
\end{equation}
We call such a point a \textit{swallowtail point}.

The \textit{codimension of a critical value} $b$ of $f$ is the number of critical points in $f^{-1}(b)$ minus 1. Let 
 $$v_1(f)=sum\ of\ codimensions\ of\ critical\ points,$$
 $$v_2(f)=sum\ of\ codimensions\ of\ critical\ values.$$
 The \textit{codimension} of $f$ is $v_1(f)+v_2(f)$. 
 We will further discuss the stratification of $\mathcal{F}$ in the following subsections.
\end{Rmk}

\begin{Rmk}

If $v_1(f)=0$, there are no cusp points or swallowtail points in the Cerf graphic, only non-degenerate critical points exist, which are called
fold points. A locus of fold points in the Cerf graphic is called a fold edge.

If $v_1(f)=1$, there is only one cusp point and no swallowtail points in the Cerf graphic. The rest are fold points. 

If $v_1(f)=2$, there are two possibilities in the Cerf graphic. 
\begin{enumerate}
 \item  There is a swallowtail point and the rest are fold points.
 
 \item  There are two cusp points and the rest are fold points.
\end{enumerate}
If $v_2(f)=1$, two critical points are mapped to the same critical value, this means two fold edges intersect in the Cerf graphic,
so we call the intersection point a \textit{crossing}.
\end{Rmk}

By the remarks above, the stratum $\mathcal{F}^0$  contains all functions of 
codimension 0, so we must have $v_1(f)=v_2(f)=0$. This implies $\mathcal{F}^0$ consists of functions with only 
fold points and distinct critical values, so they are all Morse functions.

\subsection{Reeb graphs}\label{morsereeb}

To connect pants decompositions on surfaces with Morse functions, we first need a tool called \textit{Reeb graphs}.
Named after Georges Reeb, there are  more general versions of the Reeb graphs, but here we only focus on the 
following definition:

\begin{Def}\label{reeb}
 Given a compact, orientable surface $S$, consider a Morse function $f:S\to \mathbb{R}$ such that $f$ is constant on each component
 of $\partial S$. A \textit{Reeb graph} 
 is the  quotient space $G=S/_{\sim_f}$ under the equivalence relation $x\sim_f y$ iff $x$ and $y$ are in the same component of a 
 level set of $f$.
\end{Def}
\begin{figure}[ht]
\centering
\includegraphics[width=.2\textwidth,height=.2 \textwidth]{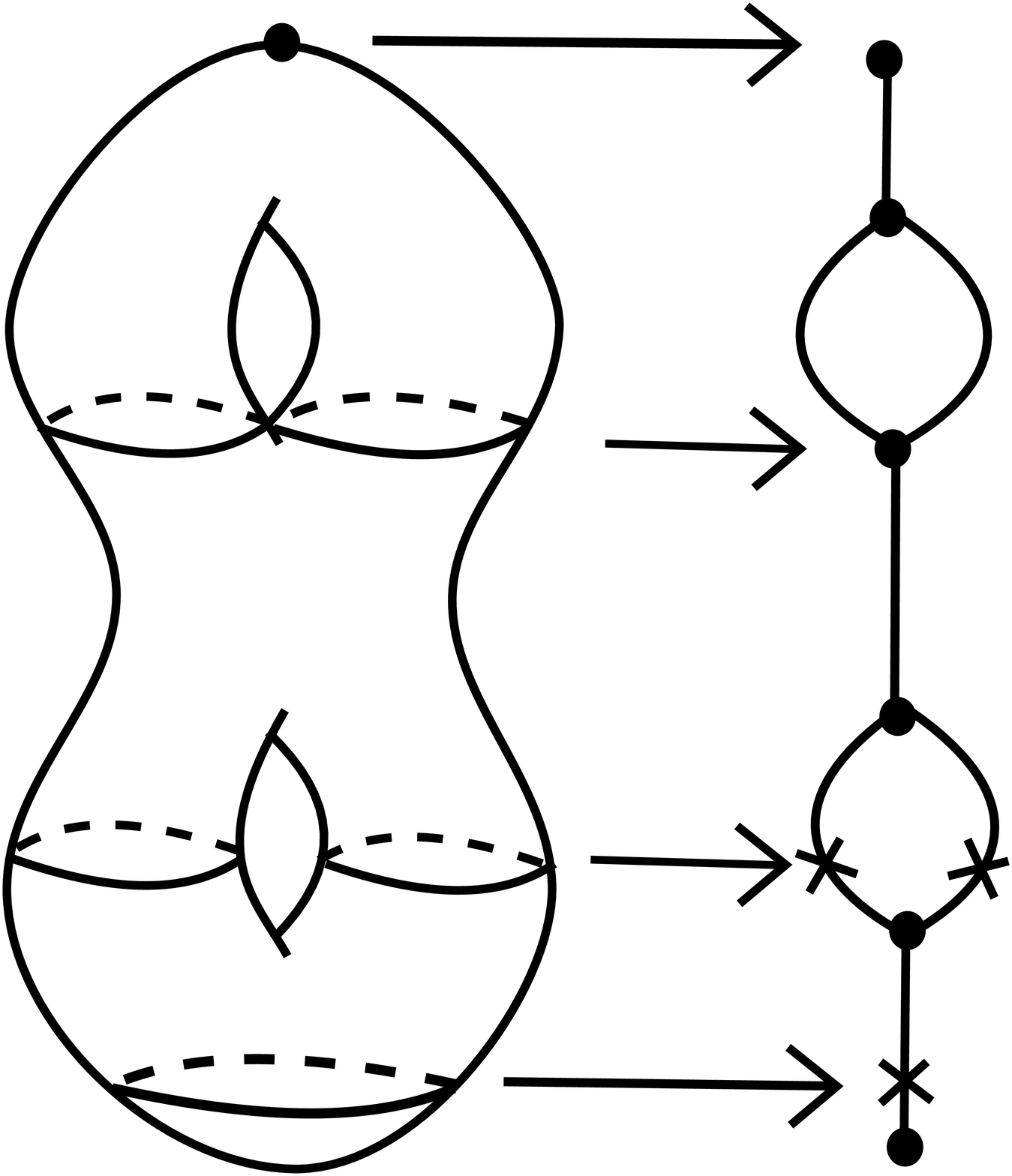}

\caption{A Reeb graph for a genus 2 closed surface under the height function. Level sets are mapped to corresponding points in the 
Reeb graph as shown in the figure.}
\label{reebg1}
\end{figure}

In a Reeb graph $G$, the vertices are of valence-one or valence-three. The preimage of a valence-three vertex is a figure-8 loop (wedge
sum of two circles),  corresponding to a saddle point, and the preimage of a valence-one vertex is either a component 
of $\partial S$, or a local maximum/minimum of $f$. Note that the original
 Morse function $f$ is the composition of the quotient map $q:S\to G$ and the induced map $G\to \mathbb{R}$.
In general, the method of defining a quotient space and writing 
the stable maps is  called a \textit{Stein factorization}. See ~\cite{BdR}, ~\cite{FT}, ~\cite{Levine} and ~\cite{MPS} for details.
In order to avoid any name conflicts, here we will call this quotient space $G$ a \textit{Reeb graph}, and the
procedure of writing a stable map as a composition of two maps a Stein factorization. 
 
 We now explain how to connect Morse functions with pants decompositions. By ~\cite{HLS}, we can associate 
 a given pants decomposition of surface $S$ to a Morse 
 function as follows:
For each Morse function $f:S\to \mathbb{R}$, by Definition \ref{reeb} we can associate a Reeb graph $G$.  Choose a midpoint for each edge in $G$ 
and let $A$ be the 
collection  of their preimages in $S$. Then $A$ is a set of curves in $S$.  Delete those curves in $A$ that bound disks in $S$
or are isotopic to boundary components of $S$. Replace mutually isotopic curves in $A$ by a single curve and denote the resulting collection by $A'$.
Then $A'$ is a pants decomposition of $S$ since elements of $A'$ are preimages of midpoints of edges in the modified  graph $G'$
(We will further discuss this graph $G'$ in Section \ref{sec4}). Thus every Morse function defines a pants decomposition. 

Conversely, one can define a Morse function from a pants decomposition: first define a function in the neighbourhoods of curves in the 
pants decompositions and neighbourhoods of boundary components so that all these curves are non-critical level curves,
then extend the function on $S$ and perturb it if it is only locally Morse to obtain a Morse function.

\subsection{Homotopies of Morse functions}\label{sechomo}
The first goal of this section is to construct a path between two vertices  representing
two pants decompositions in a pants complex. Note that we can associate a pants decomposition to a Morse 
function, thus this construction can be done by studying a homotopy between Morse functions. 
We first need the following remark.

\begin{Rmk}
In a Cerf graphic, a \textit{crossing} is the intersection of two fold edges, 
as indicated by $\mathcal{F}_{\beta}^1$ in Figure \ref{stratum1}, while a \textit{birth cusp} (resp. a \textit{death cusp})
is a shape with a cusp point at time $t_*$ and
two branches of critical values created after the moment $t_*$ (resp. cancelled after the moment $t_*$),
as indicated by $\mathcal{F}_{\alpha}^1$ in Figure \ref{stratum1}.
 Following the notations in Remark \ref{rmkstra}, we can decompose the stratum $\mathcal{F}^1$ into $\mathcal{F}_{\alpha}^1$ and 
 $\mathcal{F}_{\beta}^1$ as follows:
\begin{enumerate}
 \item $\mathcal{F}_{\alpha}^1$ is the set of functions for which $v_1(f)=1$ and $v_2(f)=0$. By definition, $v_2(f)=0$ means at every critical value $a$,
 $f^{-1}(a)$ has only one critical point. This rules out the possibility of a crossing. $v_1(f)=1$ means there is only 
 one birth cusp (or a death cusp) in the Cerf graphic. See Figure \ref{stratum1}. 
 
\item  $\mathcal{F}_{\beta}^1$ is the set of functions for which $v_1(f)=0$ and $v_2(f)=1$. $v_1(f)=0$ means all critical points are fold points. $v_2(f)=1$ means there
is only one crossing point, that is, a pair of critical points have the same critical value. See Figure \ref{stratum1}.
\end{enumerate}
\end{Rmk}

\begin{figure}[ht]
\centering
\includegraphics[width=.6\textwidth,height=.3 \textwidth]{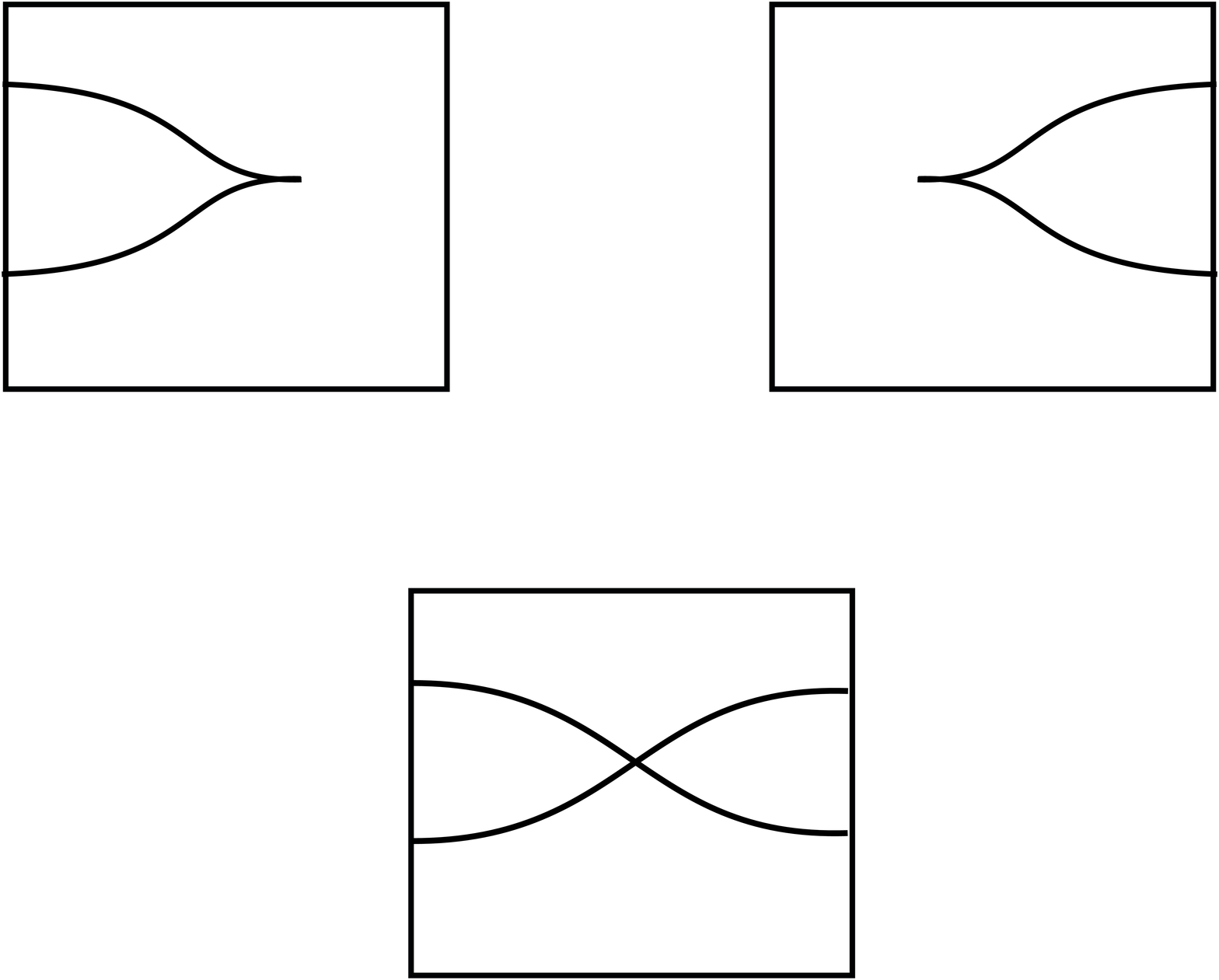}
\put(-235,90){$\mathcal{F}_{\alpha}^1$}
\put(-235,20){$\mathcal{F}_{\beta}^1$}
\put(-200,70){$i$}
\put(-200,97){$i+1$}
\put(-35,70){$i$}
\put(-45,97){$i+1$}
\put(-140,35){$i$}
\put(-140,5){$j$}
\put(-210,55){death cusp}
\put(-70,55){birth cusp}
\put(-125,-10){crossing}
\caption{Neighbourhoods of cusp points and a crossing in Cerf graphics.}
\label{stratum1}
\end{figure}

The following definition is based on the stratum of $\mathcal{F}^1$. 
\begin{Def}\label{def1parameter}
Given two Morse functions $g_0,g_1:M\to \mathbb{R}$, a homotopy $\{g_t\}_{0\leq t\leq 1}$ between $g_0$ and 
$g_1$ is called a \textit{generic homotopy between Morse functions} if it satisfies the following properties:
\begin{enumerate}
 \item The function $g_t$ is Morse for all but finitely many values of $t$ (codimension 0);
 
 \item For those values $t_i$ where $g_{t_i}$ is locally Morse but not Morse, the images of a neighbourhood around $t_i$ contain either a crossing
or a cusp in the Cerf graphic (codimension 1).
\end{enumerate}
\end{Def}

  A \textit{singularity} of a generic homotopy between Morse 
functions is either a crossing or a cusp. The number of singularities is finite by definition.
We use $t=t_s$ to represent the image of $g_{t_s}$ in the Cerf graphic.
More details can be found in~\cite{GK} and~\cite{HW}. 
An \textit{arc of Morse functions} is a homotopy $\{g_t\}$ where $g_t$ is Morse for all $t$.

As noted in Section \ref{morsereeb}, we can associate a Morse function to each pants decomposition of a surface.
Given two pants decompositions $\mathcal{P}_0$ and $\mathcal{P}_1$ of the same surface, associate two Morse functions $g_0$ and $g_1$. For each
generic homotopy of Morse functions $\{g_t\}_{0\leq t\leq 1}$, we want to construct an edge path in the pants complex
associated to $\{g_t\}$ which connects vertices whose representatives are these two pants decompositions. 
Below  we will prove Lemma \ref{rmkcross}, which states that there are finitely many pants moves between $\mathcal{P}_0$ and $\mathcal{P}_1$, which means there are finitely
many edges in the edge path between the two vertices representing $\mathcal{P}_0$ and $\mathcal{P}_1$. To 
finish this construction, we only need to prove Lemma \ref{rmkcross}.

There are two ways in which a pants move between two pants decompositions $\mathcal{P}_0$
and $\mathcal{P}_1$ may
correspond to a crossing in the Cerf graphic of the generic homotopy $\{g_t\}$.

\begin{figure}[ht]
\centering
\includegraphics[width=0.95\textwidth,height=.2 \textwidth]{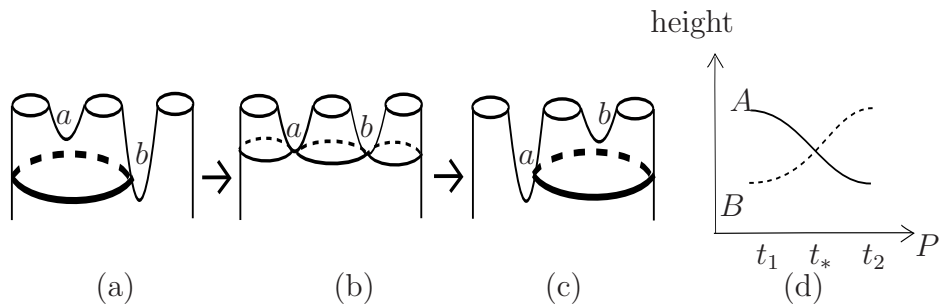}
\put(-325,45){$a$}
\put(-295,30){$b$}
\put(-70,50){$A$}
\put(-74,10){$B$}
\put(-210,40){$b$}
\put(-238,40){$a$}
\put(-120,45){$b$}
\put(-150,30){$a$}
\put(-100,80){height}
\put(0,-5){$P$}
\put(-20,-8){$t_2$}
\put(-60,-8){$t_1$}
\put(-40,-8){$t_*$}
\put(-310,-20){(a)}
\put(-220,-20){(b)}
\put(-140,-20){(c)}
\put(-50,-20){(d)}
\caption{The images of saddle points under the height function. The horizontal directions are named $P$-direction.}
\label{crossmove1}
\end{figure}

We first need some terms which may not be standard. Here we draw the Morse function as a height function.
Given a surface $S$ and $f:S\to \mathbb{R}$, let $p$ be a saddle point of $f$ in $S$, 
and $U_p$ be the component of $f^{-1}([f(p)-\epsilon, f(p)+\epsilon])$ that contains $p$. We choose $\epsilon$ to be a 
small enough positive number so that $U_p$ does not contain other critical points in $S$. Then $U_p$ is a pair of pants
with three boundary components. We say $p$ is an \textit{essential saddle point} if all three boundary components of $U_p$ 
are essential curves in $S$, i.e., they don't bound disks in $S$. A saddle point $p$ is said to be \textit{trivial}
if at least one boundary component of $U_p$ bounds a disk in $S$. An example of a trivial saddle point is the point $c$ in 
the left figure of Figure \ref{crossmove2}.

  Figure \ref{crossmove1} shows the 
A-move case. There are two different pants decompositions of a surface which are related by an A-move (subfigures (a) and (c)).
Subfigure (b) is the intermediate stage between them.
 We assume that $a$ and $b$ represent essential saddle points, $A$ and $B$ represent
the locus of $a$ and $b$ under the Morse function in a Cerf graphic. If we interchange the heights
of critical points
$a$ and $b$, this continuous process is reflected in the subfigure (d) as follows: critical points $a\in A$ and 
$b\in B$ are getting closer to each other (subfigure (a)), meet at the  crossing (subfigure (b)), then switch their heights and move away from
each other (subfigure (c)).  Therefore we have shown that an A-move corresponds to a crossing in a Cerf graphic. 

\begin{figure}[ht]
\centering
\includegraphics[width=0.7\textwidth,height=.3 \textwidth]{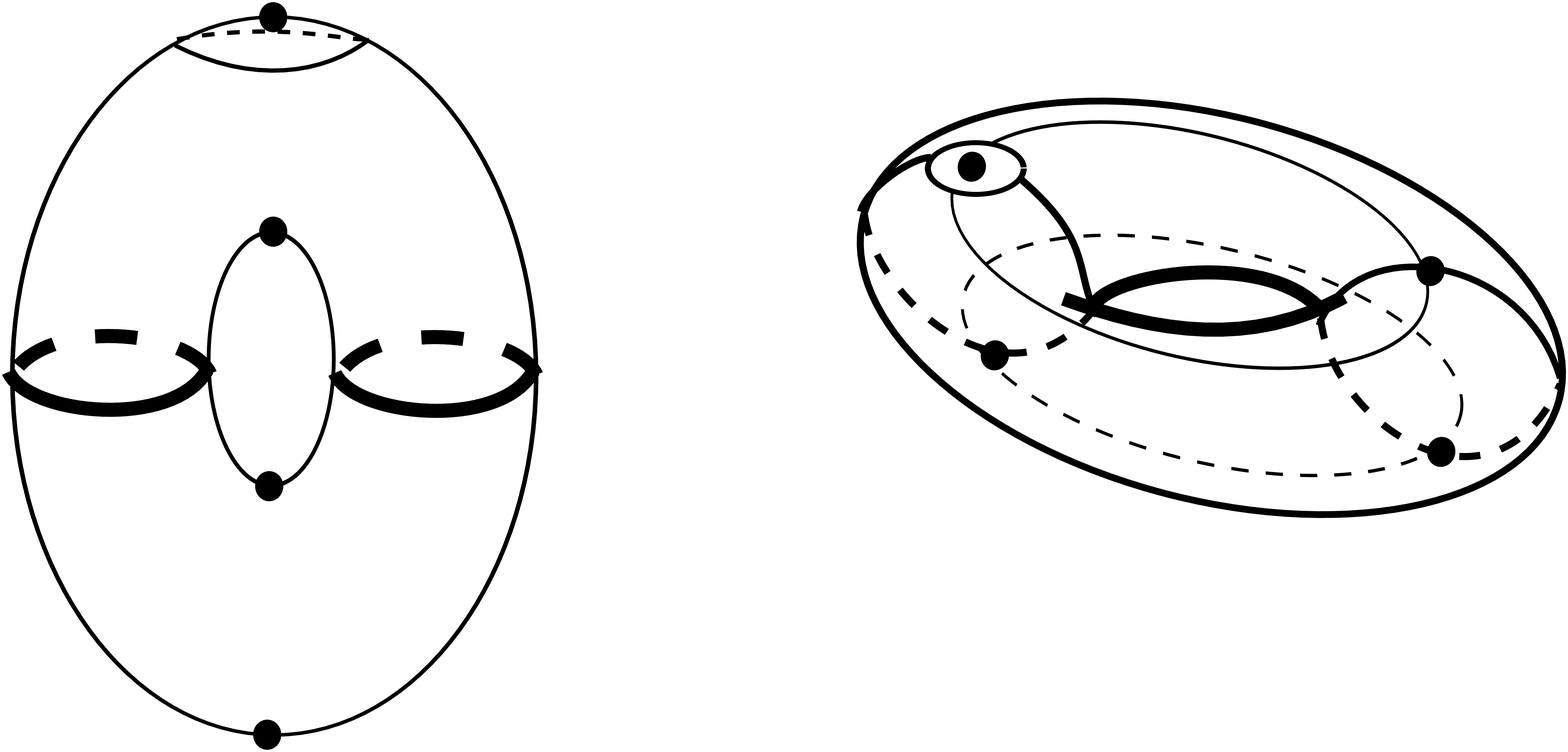}
\put(-210,110){$a$}
\put(-100,97){$a$}
\put(-210,80){$b$}
\put(-100,50){$b$}
\put(-210,30){$c$}
\put(-20,71){$c$}
\put(-210,5){$d$}
\put(-25,47){$d$}
\caption{Both figures are once-punctured tori. The neighbourhoods around $a$ in both figures are removed.}
\label{crossmove2}
\end{figure}
As for the S-move case, we look at Figure \ref{crossmove2}. As noted before, $c$ is a trivial saddle point while $b$ is
an essential saddle point in the left figure.
Imagine that this punctured torus is filled with some water
so that the thick horizontal circles in each figure are the boundaries of levels of water surfaces. One thick horizontal circle in the 
right figure is not shown, but it is mutually isotopic to the one we show.
Leaning the punctured torus changes the water surface, so one horizontal 
 circle on the left (meridian of this punctured torus) is related to the horizontal circle on the right (longitude
 of this punctured torus) by an S-move. At the beginning, we have heights $h(b)>h(c)$ in the left figure.
Lean the torus until the $h(b)<h(c)$, as the right figure shows. Here we assume $h(b)>h(d)$ and $h(a)>h(c)$
 so that $b$ and $c$ are still saddle points. This change corresponds to a crossing in a Cerf graphic for the same 
 reason as in the A-move case.  Therefore we have shown that an S-move corresponds to a crossing in a Cerf 
 graphic.

\begin{Lem}\label{rmkcross}
 Consider a generic homotopy between Morse functions $\{g_t\}$. Choose $t_0$ and $t_1$ so that $g_{t_0}$ and $g_{t_1}$
 are Morse functions, and there is a single crossing between
 $t=t_0$ and $t=t_1$ in the Cerf graphic for $\{g_t\}$. Let $\mathcal{P}_0$ and $\mathcal{P}_1$ be two pants decompositions associated to $g_{t_0}$
 and $g_{t_1}$ respectively. If $\mathcal{P}_0$ is not isotopic to $\mathcal{P}_1$, then $\mathcal{P}_0$ and 
 $\mathcal{P}_1$ differ by one or three pants moves.
\end{Lem}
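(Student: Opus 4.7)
The plan is to reduce the statement to a local analysis near the crossing time $t_*$ and then enumerate the possibilities for the local change in the Reeb graph.

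\textbf{Step 1 (Localization).} I would fix $\epsilon > 0$ small enough that the only event in the Cerf graphic on $[t_* - \epsilon,\, t_* + \epsilon]$ is the crossing of the two critical points $p$ and $q$, and choose $t_0, t_1$ inside this strip. Outside a small regular neighborhood in $S$ of the figure-eight level components through $p$ and $q$, one can arrange by an ambient isotopy that $g_{t_0}$ and $g_{t_1}$ agree. Consequently the Reeb graphs $G_0$ and $G_1$ agree outside a bounded local piece, so only a bounded number of edge-midpoints can produce essential curves in $S$ that change isotopy class between $\mathcal{P}_0$ and $\mathcal{P}_1$.

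\textbf{Step 2 (Adjacency and dichotomy).} If $p$ and $q$ are not joined by an edge in the Reeb graph (equivalently, at heights between their critical values they lie in different components of the level sets), then swapping their heights produces isomorphic Reeb graphs, and hence $\mathcal{P}_0$ is isotopic to $\mathcal{P}_1$, contradicting the hypothesis. So I may assume $p$ and $q$ are adjacent valence-three vertices in $G_0$ (and in $G_1$). Next, I classify $p$ and $q$ as essential or trivial saddles as defined before Figure \ref{crossmove1}. If both are trivial, every midpoint curve affected by the swap is discarded from the pants decomposition, so again $\mathcal{P}_0 = \mathcal{P}_1$. Hence at least one of $p, q$ is essential.

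\textbf{Step 3 (Case analysis).} Let $N \subset S$ be the subsurface cut out by the preimage of the boundary of the local piece of the Reeb graph containing the images of $p$ and $q$. In the clean case where both $p$ and $q$ are essential and $N$ has type $(0,4)$ or $(1,1)$, the A-move and S-move discussion immediately preceding the lemma (Figures \ref{crossmove1} and \ref{crossmove2}) shows that the swap realizes a single A-move or S-move: this is the \emph{one-move} case. The remaining configurations are those in which at least one of the saddles is trivial, so that a midpoint curve which is essential on one side of the crossing becomes disk-bounding or boundary-parallel on the other. In such a configuration I claim that the net change in $\mathcal{P}$ factors as a path of length three in the pants complex, namely a path of three A-moves around an A-triangle or three S-moves around an S-triangle. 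Intuitively, the trivial midpoint curve that gets discarded plays the role of an implicit intermediate vertex; re-expressing the swap so that every intermediate pants decomposition is genuinely a pants decomposition of $S$ forces one to route through the full triangle relation, which closes up via three atomic pants moves. This is the \emph{three-moves} case.

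\textbf{Main obstacle.} The hard part will be the trivial-essential case of Step 3: giving a complete enumeration of the local Reeb-graph pictures at a saddle-saddle adjacency and verifying for each picture that the net change in the essential midpoint curves is exactly the boundary of an S-triangle or A-triangle, and never of length two or strictly greater than three. This requires careful bookkeeping of two competing effects of the midpoint construction --- discarding disk-bounding or boundary-parallel curves, and identifying mutually isotopic curves --- either of which can reduce or reshuffle the count. Once each local picture is matched to a triangle relation, the lemma follows immediately: a crossing that changes $\mathcal{P}$ at all corresponds either to one pants move (Case 1) or to three pants moves (Case 2).
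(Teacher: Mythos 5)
Your Step 3 misidentifies where the three-move case comes from, and the claimed factorization cannot be right even in principle. The paper's argument first bounds the complexity of the region affected by the crossing: since the level $t=t_s$ between $t_0$ and $t_1$ meets the Cerf graphic in at most two critical values, only two adjacent pairs of pants are involved, so the relevant subsurface has $\chi\geq -2$ and must be one of $S_{0,3}$, $S_{1,1}$, $S_{0,4}$, $S_{1,2}$, $S_{2,0}$. The three-move case is then the $S_{1,2}$ case, where \emph{both} saddles are essential (take Figure \ref{crossmove2} with a neighbourhood of $d$ removed): swapping their heights changes the pants decomposition by an A-move, then an S-move, then an A-move, i.e.\ half of the mixed hexagon of Figure \ref{fig8} --- not a triangle relation. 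Your dichotomy ``both saddles essential $\Rightarrow$ one move; a trivial saddle involved $\Rightarrow$ three moves'' fails in both directions: the single S-move on $S_{1,1}$ is exactly a crossing between an essential saddle $b$ and a trivial saddle $c$ (Figure \ref{crossmove2}), while the genuine three-move case has no trivial saddle at all. Moreover, a path of three moves around an S-triangle or A-triangle traverses the full boundary of that 2-cell and returns to its starting vertex, so it would force $\mathcal{P}_0$ isotopic to $\mathcal{P}_1$, contradicting your standing hypothesis; the ``main obstacle'' you defer could therefore never be resolved as stated.

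What is missing is precisely the finiteness step you never take: use the fact that a crossing involves at most two critical points on one level to restrict the supporting subsurface to the five types above, then check each type directly. That enumeration is what produces the answer ``one move'' (from $S_{1,1}$, $S_{0,4}$, and the corresponding configurations in $S_{2,0}$), ``no move'' (from $S_{0,3}$ and capped-off configurations, which is consistent with the hypothesis only when $\mathcal{P}_0\simeq\mathcal{P}_1$), and ``three moves'' (from $S_{1,2}$ via the 6AS relation). Without the $\chi\geq -2$ bound you have no control over the local picture, and with the triangle-relation guess you would match the wrong HLS relation to the crossing.
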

\begin{proof}
We know pants moves define crossings from the discussions above. 
However, not every crossing defines a pants move. For example,
if we  cap off all top boundary components of surfaces in (a), (b) and (c) of Figure \ref{crossmove1}, then changing the heights of 
 $a$ and $b$ still gives a crossing in the Cerf graphic, but there is no pants move between (a) and (c). Therefore if a crossing between $t=t_1$ and $t=t_2$
 in a Cerf graphic doesn't define a pants move, then the two pants decompositions associated to $g_{t_1}$ and $g_{t_2}$ are 
 isotopic to each other. 
 Note that a crossing in a Cerf graphic $G$ is the intersection of two fold edges, thus the intersection of $G$ and 
 $t=t_s\in (t_1, t_2)$ contains at most two points, which involves at most two pairs of pants in the surface. Thus the 
 Euler characteristic of the surface is greater than or equal to $-2$. This means we only need to consider the cases that
 $S=S_{0,3}, S_{1,1}, S_{0,4}, S_{1,2}, S_{2,0}$. (1) If $S=S_{0,3}$, imagine the surface as in Figure \ref{crossmove1} with
 a bottom disk, thus a crossing defines no pants move. You can also imagine surface as in Figure \ref{crossmove1} with
 a bottom boundary circle and cap off one of the top boundary components, thus a crossing defines no pants move.
 (2) If $S=S_{1,1}$, this is exactly the S-move case above, so 
 a crossing defines an S-move. (3) If $S=S_{0,4}$, imagine the surface as in Figure \ref{crossmove1} with a bottom boundary
  circle, thus a crossing defines an A-move. (4) If $S=S_{1,2}$, imagine the surface as in Figure \ref{crossmove2} with 
  a neighbourhood of $d$ removed, Then there are three pants moves between these two pants decompositions based on 
  Figure \ref{fig8}: Start from the middle surface of the top row, it takes an A-move to get the right surface of the top row, then
  an S-move to get the right surface of the bottom row, and one more A-move before reaching the middle surface of the bottom row.
  (5) If $S=S_{2,0}$, then the crossing defines either an S-move as in (2) or an A-move as in (3). 
\end{proof}

Before introducing the generic homotopy of homotopies, we need the following remark.

\begin{Rmk}
 The stratum $\mathcal{F}^2$ contains all functions of codimension 2, and there are six types of functions in $\mathcal{F}^2$
in total. Here we list three types of them, which we will need in the rest of this section.  For a complete list, we 
recommand ~\cite{HW}.

Type 1: $\mathcal{F}_{\alpha}^2$ is the set of function for which $v_1(f)=2$ and $v_2(f)=0$.  $v_2(f)=0$ implies no crossing points, more precisely,
there are no two critical points mapped to the same critical value. $v_1(f)=2$ says there is a \textit{swallowtail point} (codimension 2) or there are two birth-death cusp points (each one is codimension 1). 
 There are two cases of swallowtail points, see Figure \ref{stratum2}.

\begin{figure}[ht]
\centering
\includegraphics[width=.8\textwidth,height=.3 \textwidth]{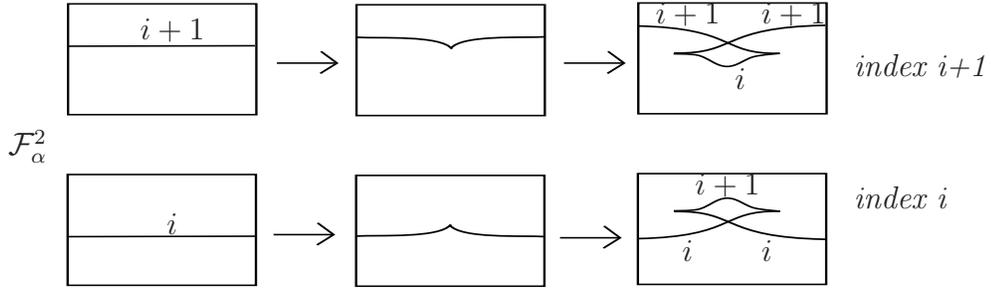}
\put(-310,50){$\mathcal{F}_{\alpha}^2$}
\put(-250,20){$i$}
\put(-260,94){$i+1$}
\put(-35,75){$i$}
\put(-25,100){$i+1$}
\put(-65,100){$i+1$}
\put(-25,10){$i$}
\put(-55,10){$i$}
\put(-50,35){$i+1$}
\put(10,80){index i+1}
\put(10,30){index i}
\caption{swallowtail singularity}
\label{stratum2}
\end{figure}

\begin{figure}[ht]
\centering
\includegraphics[width=.8\textwidth,height=.2 \textwidth]{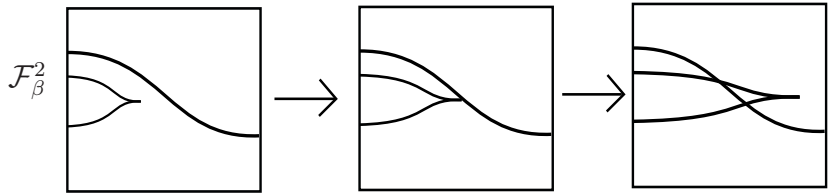}
\put(-310,40){$\mathcal{F}_{\beta}^2$}
\caption{cusp-fold}
\label{stratum5}
\end{figure}

Type 2: $\mathcal{F}_{\beta}^2$ is the set of functions for which $v_1(f)=1$ and $v_2(f)=1$. $v_1(f)=1$ implies there is only one cusp point,
either a birth cusp point or a death cusp point. Figure \ref{stratum5} uses a death cusp as an example. $v_2(f)=1$ implies this cusp point 
 and a non-degenerate point have the same critical value. We call the singularity in the middle figure of Figure \ref{stratum5} a 
 \textit{cusp-fold point} since it is the intersection of a cusp point and a fold edge.

Type 3: $\mathcal{F}_{\gamma}^2$ is the set of functions for which $v_1(f)=0$ and $v_2(f)=2$. Three non-degenerate points have the same critical
 value, thus the preimage of this critical value has 3 critical points, providing $v_2(f)=2$. $v_1(f)=0$
 implies no cusps nor swallowtail points.  See Figure \ref{stratum6}. We call the singularity in the middle figure 
 of Figure \ref{stratum6} a \textit{Reidemeister-III fold point} since the whole event looks like a Reidemeister-III type  
 move in knot theory.

\begin{figure}[ht]
\centering
\includegraphics[width=.8\textwidth,height=.2 \textwidth]{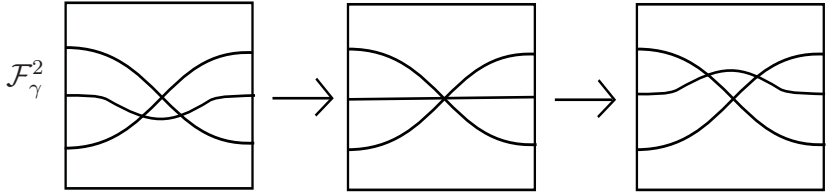}
\put(-310,40){$\mathcal{F}_{\gamma}^2$}
\caption{Reidemeister-III fold}
\label{stratum6}
\end{figure}
\end{Rmk}

Let $S$ be a surface.
We next introduce the generic homotopy of homotopies $g_{s,t}:S\to \mathbb{R}$. The definition and pictures
can be found in ~\cite{GK}. We will use the generic homotopy between homotopies of Morse functions to define the local behavior of a generic homotopy
between Morse 2-functions.

\begin{figure}[ht]
\centering
\includegraphics[width=.3\textwidth,height=.3 \textwidth]{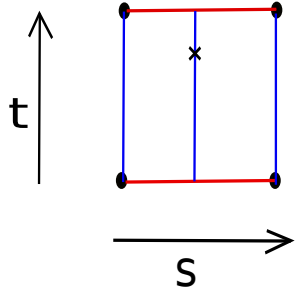}
\caption{A schematic picture for the image of a a generic homotopy between homotopies. This is an overall illustration of the following
four conditions.}
\label{m2}
\end{figure}
Given a generic homotopy $g_{0,t}:S\to \mathbb{R}$ between Morse functions $g_{0,0}$ and $g_{0,1}$, and another
generic homotopy $g_{1,t}:S\to \mathbb{R}$ between Morse functions $g_{1,0}$ and $g_{1,1}$,  a 2-parameter
family $g_{s,t}:S\to \mathbb{R}$ is called a \textit{generic homotopy between homotopies} if it
satisfies the following conditions (see Figure \ref{m2}):
\begin{enumerate}
 \item $g_{s,0}$ and $g_{s,1}$ are arcs of Morse functions for $0\leq s\leq 1$.
 
 \item $\{g_{s,t}\}_{0\leq t\leq 1}$ is a generic homotopy between Morse functions $g_{s,0}$ and $g_{s,1}$ for all but finitely many values of
$s$. In particular, we assume that $g_{0,t}$ and $g_{1,t}$ are generic homotopies.

 \item For those values $s_*$ where $\{g_{s_*,t}\}_{0\leq t\leq 1}$ is not a generic homotopy, there is a single value $t_*$ such that
$g_{s_*,t}$ is a generic homotopy for both $t\in [0,t_*)$ and $t\in (t_*,1]$.

 \item At each of these points $(s_*,t_*)$, exactly one of the following events occurs (see Figure \ref{2parameter}, \ref{3move}, 
\ref{birthpair}, \ref{swallowtail}, the middle levels in each graphic are $s=s_*$):
\begin{enumerate}[(a)]
 \item 2-parameter coincidence: Two of the events in Definition \ref{def1parameter} occur simultaneously at $t=t_*$.
 They are the three unshown cases in the stratum of $\mathcal{F}^2$. We won't need them in our discussion of Morse 2-functions but list them here
 for completeness. See Figure \ref{2parameter}.
\begin{figure}[ht]
\centering
\includegraphics[width=.3\textwidth,height=.4 \textwidth]{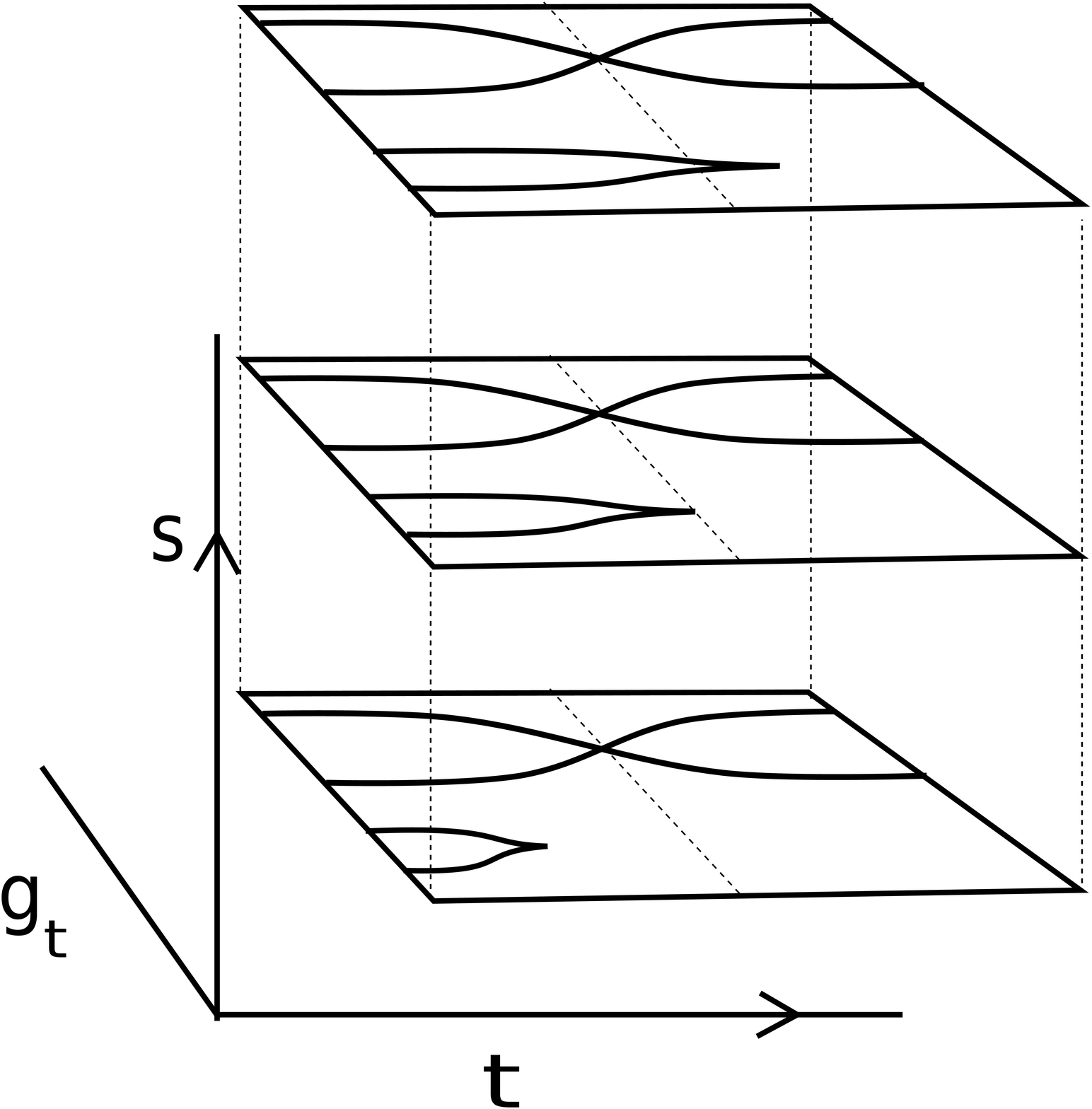}
\includegraphics[width=.3\textwidth,height=.4 \textwidth]{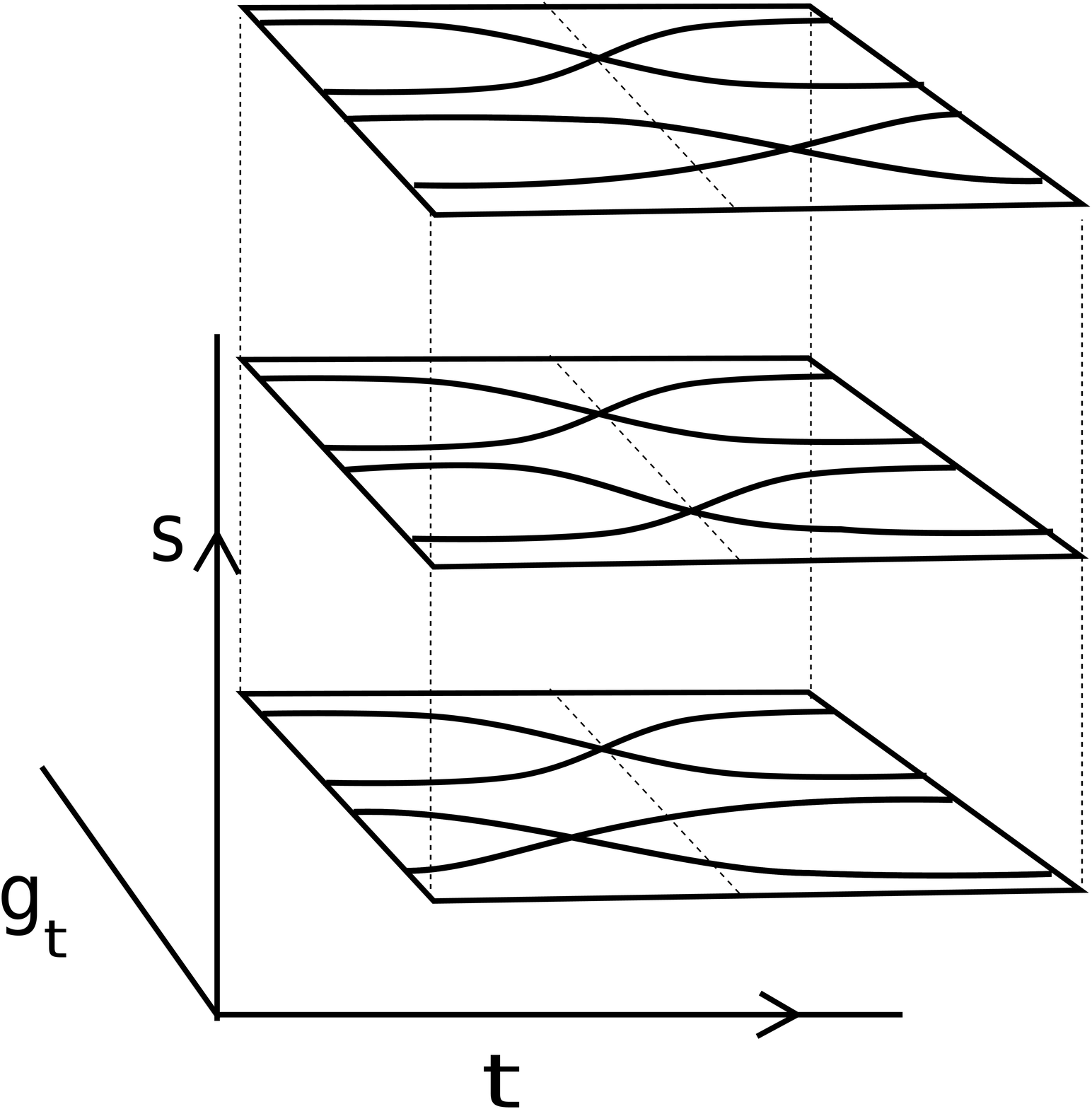}
\includegraphics[width=.3\textwidth,height=.4 \textwidth]{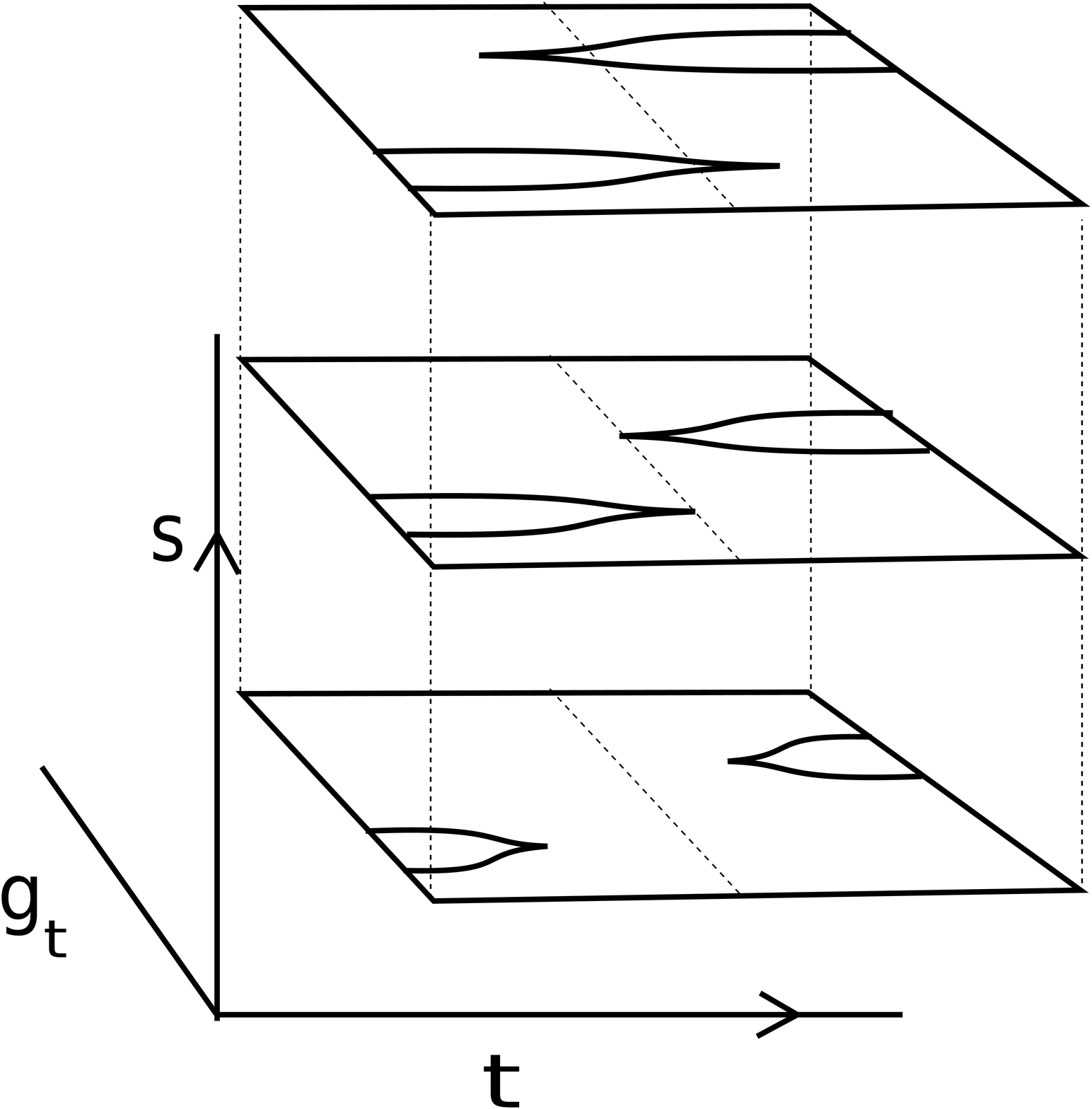}
\put(-55,28){\tiny$i$}
\put(-68,40){\tiny$i+1$}
\put(-25,35){\tiny$j$}
\put(-51,47){\tiny$j+1$}
\caption{2-parameter coincidence: Three different cases. The middle levels are 
$t=t_*$.The $i$'s and $j$'s are indices of folds.}
\label{2parameter}
\end{figure}

 \item Reidemeister-II fold crossing: In this event, two crossings are cancelled ($s<s_*$) or introduced ($s>s_*$). Note that 
 each pants move  corresponds to a crossing (Remark \ref{rmkcross}), so a cancelling pair corresponds to 
 the top level on the left of Figure \ref{3move}. From a level $s<s_*$ to a level $s>s_*$  a pants move and its inverse
 are created, thus an edge and its inverse are created in the corresponding edge path (in a pants complex). We will see that this event 
 induces a cancelling pair move in Section \ref{singularities}.
 
 \item  Reidemeister-III fold crossing: Also called a Reidemeister-III type singularity.
 Three folds intersect at $t=t_*$ simultaneously. This is case 3 in the stratum
 of $\mathcal{F}^2$. See the middle of Figure \ref{3move}. In section \ref{secpmove}, We will see that this event induces an 
 A-pentagon move, a move between pants-block decompositions of a manifold.

 \item Cusp-fold crossing: This is case 2 in the stratum of $\mathcal{F}^2$. The function $g_{s_*,t}$ fails to be a generic 
 homotopy because a cusp meets a fold point at $t=t_*$. In a neighbourhood where $|s-s_*|<\epsilon$ and $|t-t_*|<\epsilon$,
 the function $g_{s,t}$ is given by replacing $x_{n+1}^2$ in formula (\ref{nondeg}) by $x_{n+1}^3+(t-t_*)x_{n+1}$. The function $g_{s,t}$
 is Morse outside of this neighbourhood. See the right of Figure \ref{3move}.

 \begin{figure}[ht]
\centering
\includegraphics[width=.3\textwidth,height=.4 \textwidth]{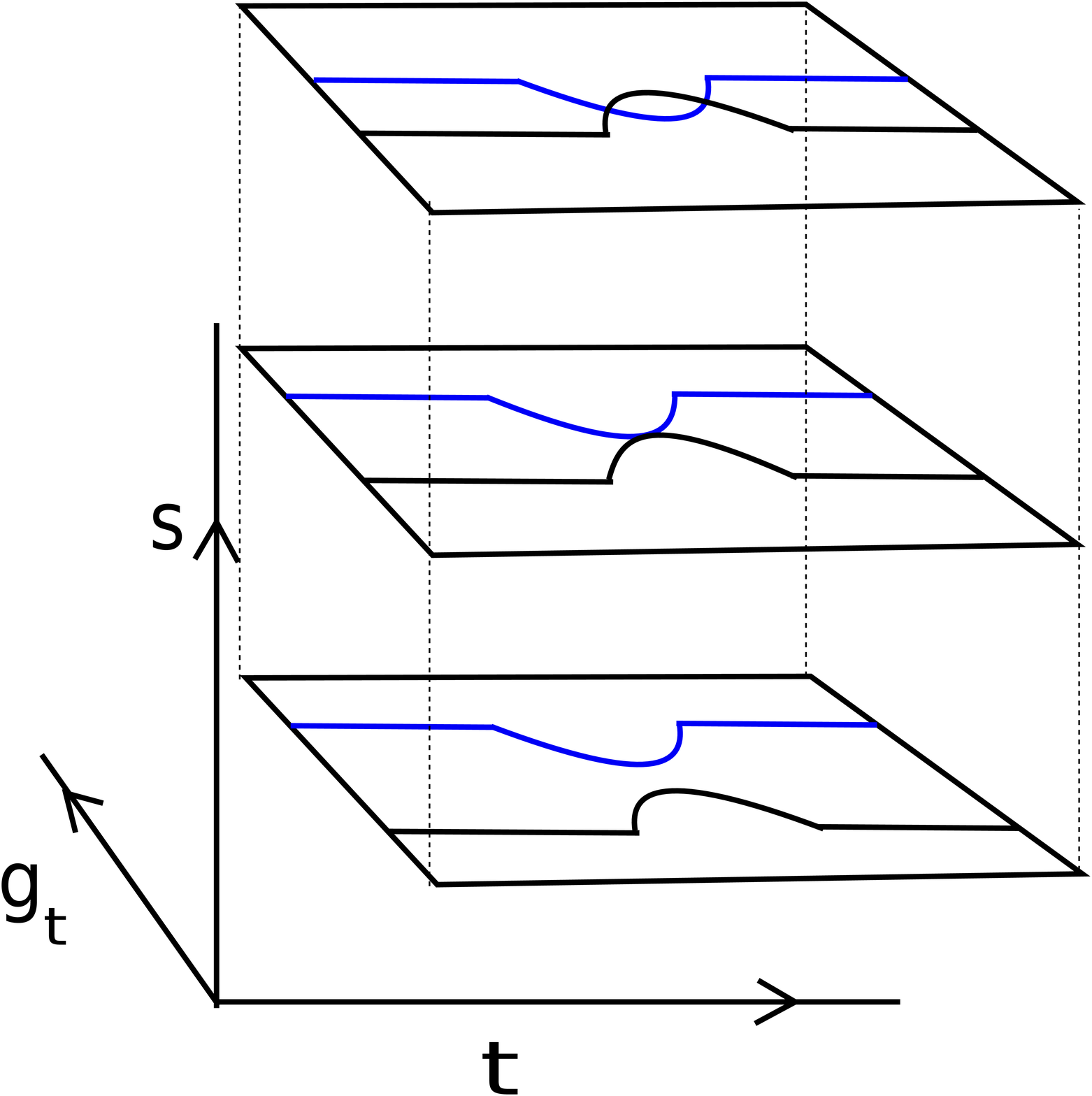}
\includegraphics[width=.3\textwidth,height=.4 \textwidth]{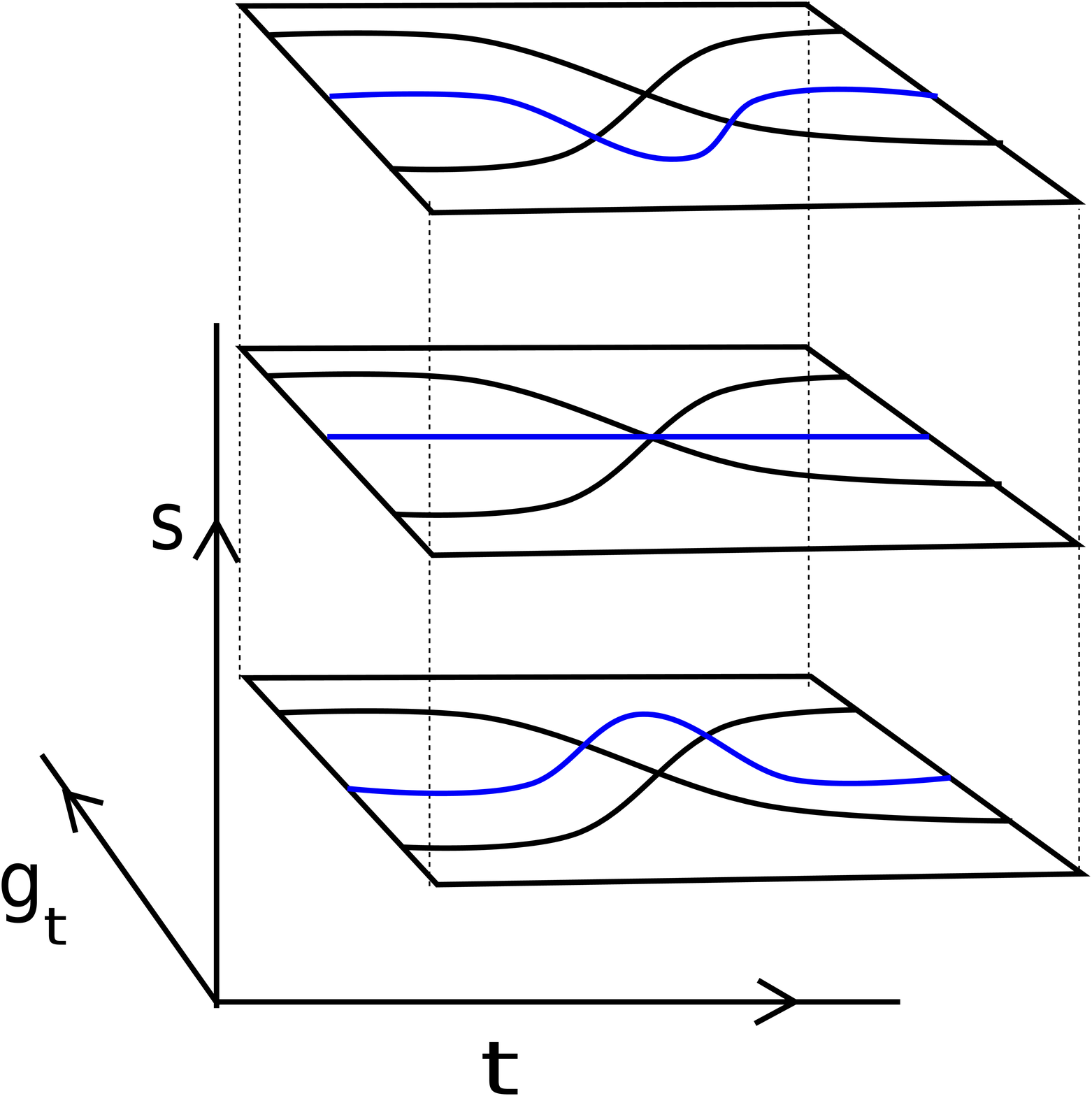}
\includegraphics[width=.3\textwidth,height=.4 \textwidth]{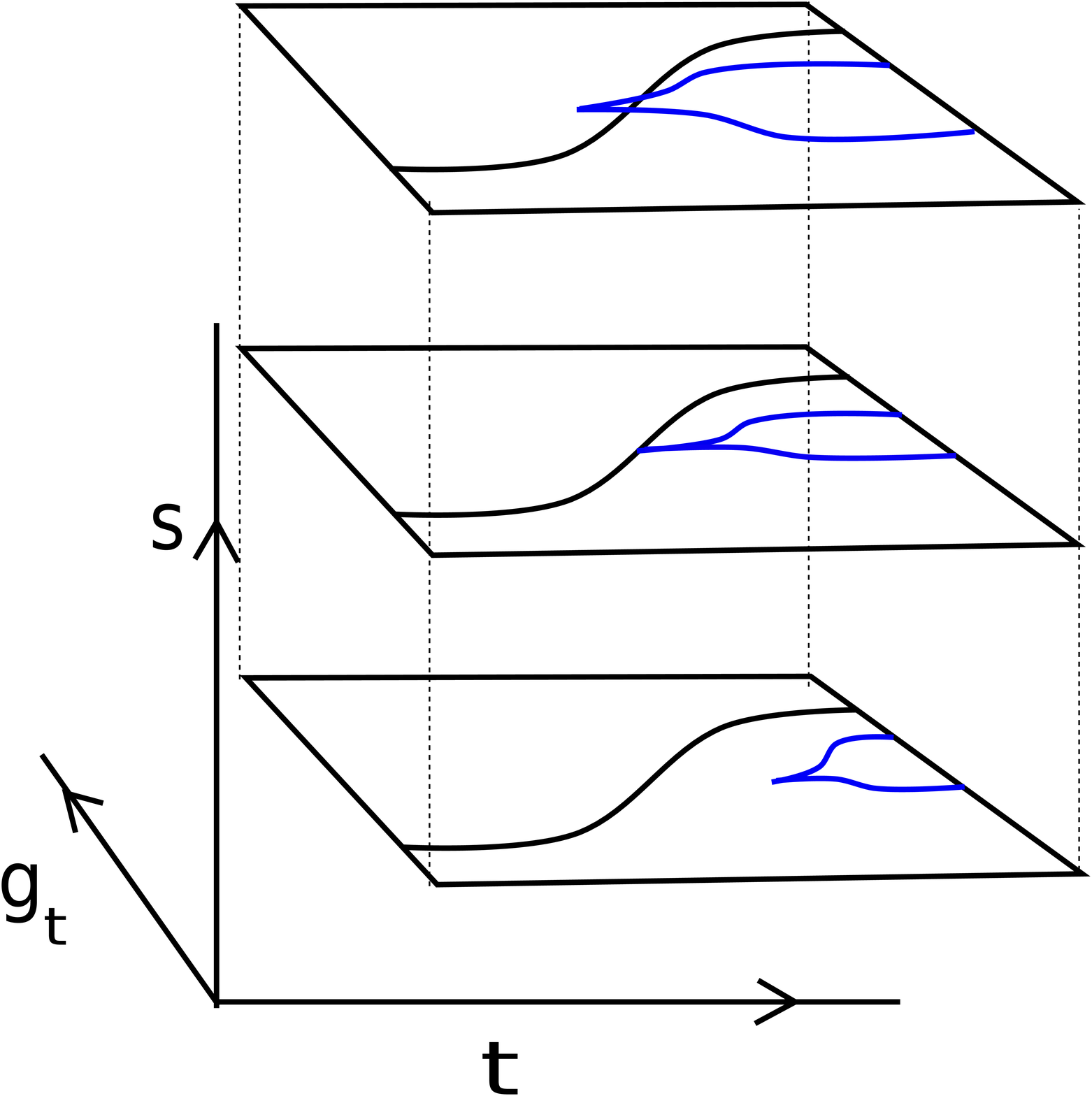}
\caption{3 cases of singularities. The middle horizontal levels are
$s=s_*$. They are from Figure 5, 6 and 7 in ~\cite{GK}.}
\label{3move}
\end{figure}

 \item Eye birth(death) singularity: In a neighbourhood where $|s-s_*|<\epsilon$ and $|t-t_*|<\epsilon$,
 the function $g_{s,t}$ is given by replacing $x_{n+1}^2$ in formula (\ref{nondeg}) by $x_{n+1}^3+(t-t_*)^2x_{n+1}+(s-s_*)x_{n+1}$. The 
 function $g_{s,t}$
 is Morse outside of this neighbourhood. Geometrically, this singularity introduces(cancels) an ``eye'' shape that is joined by 
 a pair of cusps in Figure \ref{birthpair} as $s$ increases(decreases). Figure \ref{eye} gives another way to view this singularity (suggested 
 by David Gay).
 Use vertical planes to cut the shape, the slices are empty sets before $s_1$ and after $s_2$. the intersection of
 the shape with the planes $s=s_1$ and $s=s_2$ 
 are the eye birth and eye death singularities. When $s_1<s<s_2$, each slice looks like an eye. The singularity has codimension 1
 since it is equivalent to one cusp point in an eye-shaped slice. Therefore this singularity is in $\mathcal{F}_{\beta}^1$.
 
 \item Merge singularity: In a neighbourhood where $|s-s_*|<\epsilon$ and $|t-t_*|<\epsilon$,
 the function $g_{s,t}$ is given by replacing $x_{n+1}^2$ in formula (\ref{nondeg}) by $x_{n+1}^3-(t-t_*)^2x_{n+1}+(s-s_*)x_{n+1}$. $g_{s,t}$
 is Morse outside of this neighbourhood. See the right of Figure \ref{birthpair}.
 
 \begin{figure}[ht]
\centering
\includegraphics[width=.6\textwidth,height=.3 \textwidth]{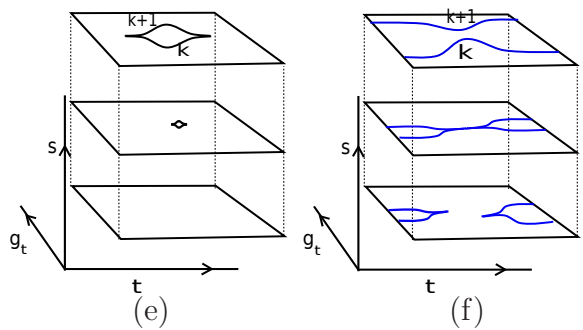}
\put(-170,-10){(e)}
\put(-50,-10){(f)}
\caption{Left: Introduce an eye-shaped;  Right: Merge of two cusps. They are from
Figure 8 and 9 in ~\cite{GK}.}
\label{birthpair}
\end{figure}

\begin{figure}[ht]
\centering
\includegraphics[width=.6\textwidth,height=.3 \textwidth]{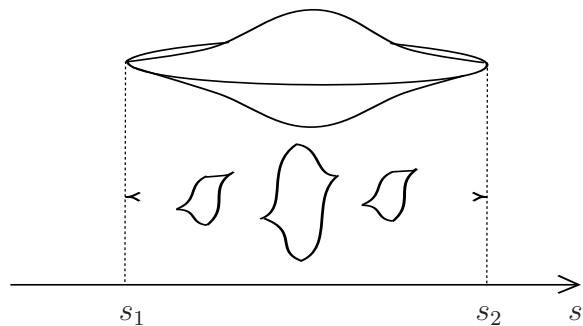}
\put(-175,-10){$s_1$}
\put(-40,-10){$s_2$}
\put(-5,-10){$s$}
\caption{Another way to view the eye birth singularity.}
\label{eye}
\end{figure}

\begin{figure}[ht]
\centering
\includegraphics[width=.6\textwidth,height=.3 \textwidth]{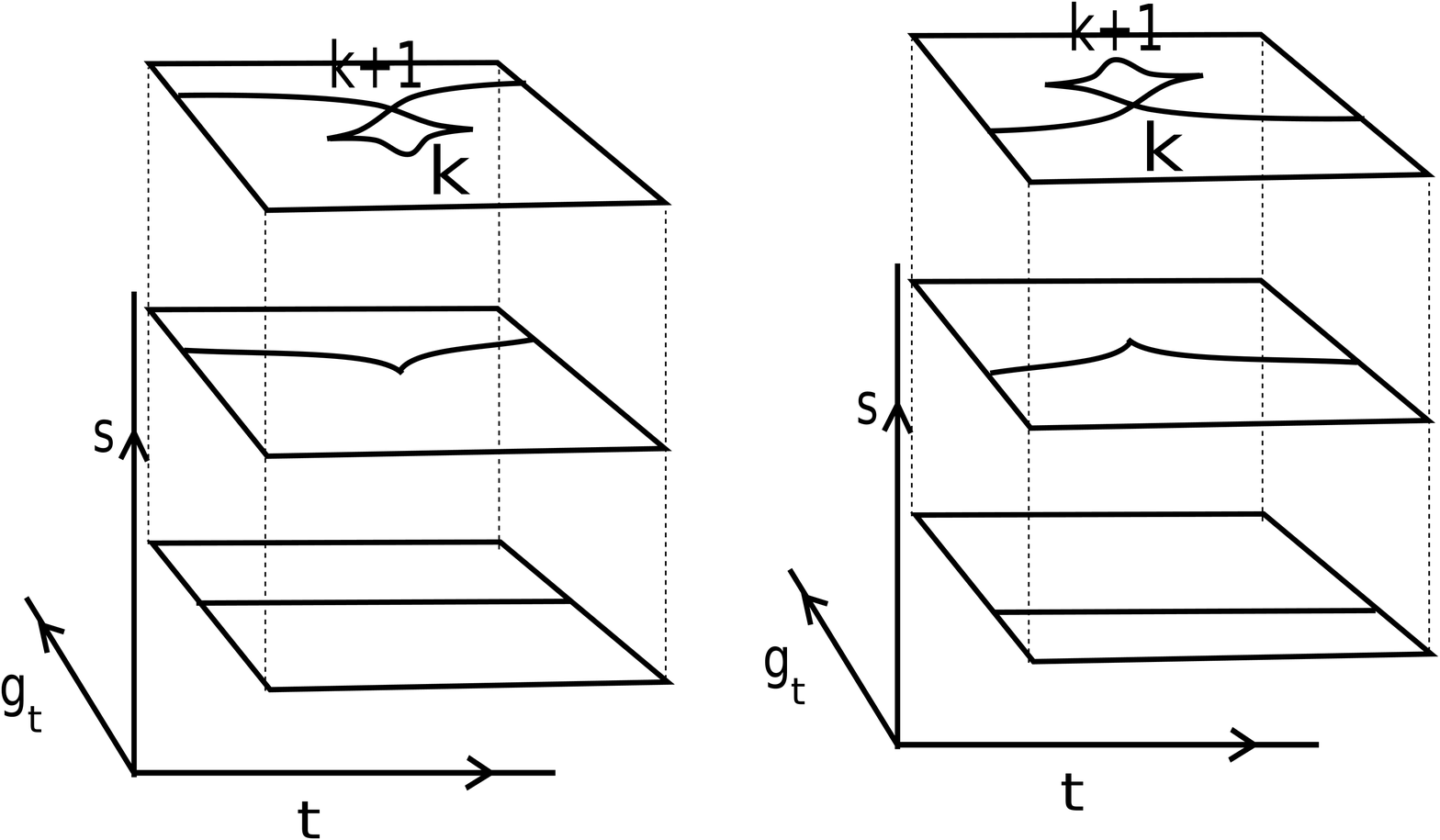}
\caption{Birth of a swallowtail. From Figure 10 in ~\cite{GK}.}
\label{swallowtail}
\end{figure}
 \item Swallowtail birth singularity: This is case 1 in the stratum of $\mathcal{F}^2$. In a neighbourhood where $|s-s_*|<\epsilon$ 
 and $|t-t_*|<\epsilon$, the function $g_{s,t}$ is given by replacing $x_{n+1}^2$ in formula (\ref{nondeg}) by 
 $x_{n+1}^4+(s-s_*)x_{n+1}^2+(t-t_*)x_{n+1}$. The function $g_{s,t}$ is Morse outside of this neighbourhood. 
See Figure \ref{swallowtail}.

\end{enumerate}
\end{enumerate}

The last four events in the list can also be found in other references. In Chapter 5 of ~\cite{HW}, Hatcher and 
Wagoner listed them as changes in the Cerf graphic. 
In  Section A of ~\cite{Lekili}, Lekili used them for the classification of (1,1)-stable unfoldings. We will describe 
induced moves corresponding to these six events (not include the first one) in Section \ref{singularities}.

\subsection{Morse 2-functions and Reeb complexes}\label{secmorse2}
Throughout this subsection, we let $M$ be a smooth, closed, connected, oriented 3-manifold unless otherwise specified. 
A Morse 2-function, is a smooth, stable map from $M$ to $\mathbb{R}^2$, which can be locally modelled by some generic
homotopy between Morse functions. The definition of Morse 2-function can be generalized to arbitrary $M^n$
and a general 2-manifold, but in this paper we will only focus on $n=3$ and $\mathbb{R}^2$. Among all of the cases, we are 
interested in the one that Gay and Kirby called \textit{indefinite Morse 2-function}.  They gave the existence
and uniqueness results for indefinite Morse 2-functions in ~\cite{GK}, and used Morse 2-functions to reconstruct 4-manifolds in ~\cite{GK2}. Here we
will introduce  Morse 2-functions (Definition \ref{morse2}) and the Reeb complexes, as well as the generic homotopy between two 
Morse 2-functions, which we will use in eliminating some singularities of Reeb complexes (Section \ref{singularities}).

The following is Definition 2.7 of ~\cite{GK}, except we restrict arbitrary 2-manifolds to $\mathbb{R}^2$.
\begin{Def}\label{morse2}
 Given an $n$-manifold $M$, a smooth proper map $G:M\to \mathbb{R}^2$ is a \textit{Morse 2-function} if for
 each $q\in \mathbb{R}^2$ there is a compact neighbourhood $S$ of $q$ with a diffeomorphism $\psi: S\to I\times I$ and a diffeomorphism
 $\phi: G^{-1}(S)\to I\times N$, for an $(n-1)$-manifold $N$ where $I\times N\subset M$, such that $\psi\circ G\circ \phi^{-1}: I\times N\to I\times I$
 is of the form $(t,p)\to (t,g_t(p))$ for some generic homotopy $g_t:N\to I$ between Morse functions .
\end{Def}

The set of critical points of $G$, denoted by $C(G)$, can be classified as follows (see ~\cite{Levine}):
\begin{Def}
 Let $G:M\to \mathbb{R}^2$ be a Morse 2-function. There exists neighbourhoods near each point $p\in C(G)$ and $G(p)\in \mathbb{R}^2$, so that
 $G$ is one of the following forms:
 
 \begin{enumerate}
  \item $(u,x,y)\to (u, x^2+y^2)$, when $p$ is a definite fold point;
  
  \item $(u,x,y)\to (u, x^2-y^2)$, when $p$ is an indefinite fold point;
  
  \item $(u,x,y)\to (u, y^2+ux-\frac{x^3}{3})$, when $p$ is a cusp point.
 \end{enumerate}
In addition the following global conditions are satisfied:

\begin{enumerate}[(a)]
 \item If $p$ is a cusp point, then $\{p\}=G^{-1}(G(p))\cap C(G)$;
 
 \item $G|_{C(G)-\{cusps\}}$ is an immersion with normal crossings.
\end{enumerate}

\end{Def}

The sets of singularities of Morse 2-functions are \textit{folds} and \textit{cusps}. There are two types of folds, a \textit{definite
fold} is an arc of definite fold points, while an \textit{indefinite fold} is an arc of indefinite fold points. A \textit{cusp} contains a cusp point with two
branches of folds, one branch is definite and the other is indefinite, see Figure \ref{morsecri}. A \textit{crossing}
is the intersection point of images of two indefinite folds in $\mathbb{R}^2$. We will give 
 details about images of each type of critical points in Reeb complexes.
To know more about folds and cusps, we recommand ~\cite{KS},~\cite{Levine} and  ~\cite{Saeki}.  
A Morse 2-function is \textit{indefinite} if it has no definite folds.

\begin{figure}[ht]
\centering
\includegraphics[width=.4\textwidth,height=.3 \textwidth]{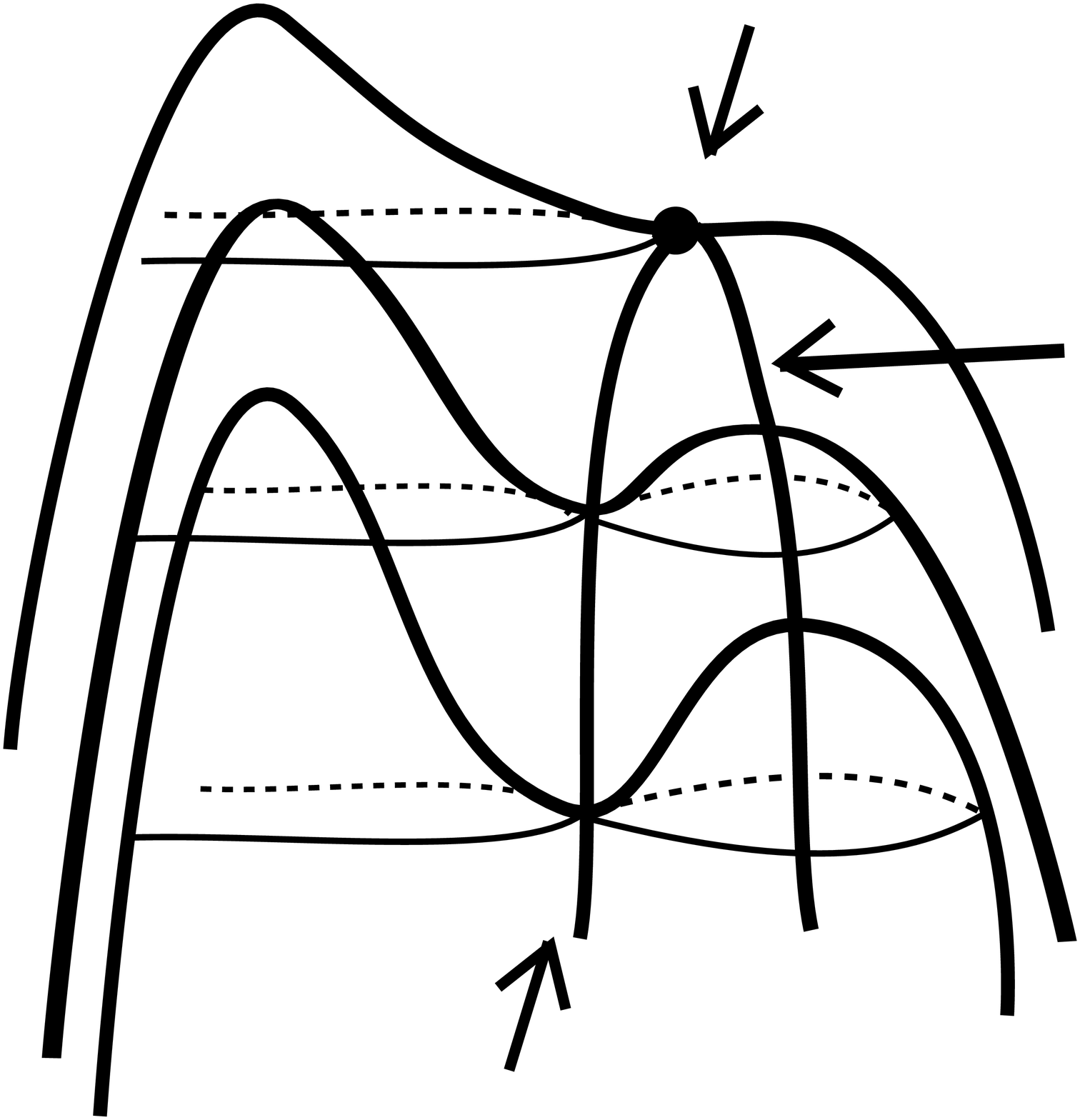}
\put(-50,110){cusp point}
\put(0,70){definite fold}
\put(-100,-10){indefinite fold}
\caption{An example of singularity sets of a Morse 2-function. The definite fold is a collection of local maxima of level
surfaces, the indefinite fold is a collection of saddle points of level surfaces.}
\label{morsecri}
\end{figure}

Similar to the generic homotopy of Morse functions, we can now define a generic homotopy between Morse 2-functions.
We will need the generic homotopies between Morse 2-functions in the proof of the main theorem. 
\begin{Def}
 A homotopy $F_s: M\to \mathbb{R}^2$ is a \textit{generic homotopy between Morse 2-functions} $F_0$ and $F_1$ if,
 for each $q\in \mathbb{R}^2$ and each $s_*\in I$ ($I=[0,1]$), there is an $\epsilon>0$ and a compact neighbourhood
 $U$ of $q$ with a diffeomorphism $\psi: U\to I\times I$ and a 1-parameter family of diffeomorphism $\phi_s: F_s^{-1}(U) \to
 I\times S$, for a surface $S$ and for $|s-s_*|<\epsilon$, such that $\psi\circ F_s\circ \phi_s^{-1}: I\times S\to I\times I$
 is of the form $(t,p)\to (t,g_{s,t}(p))$ for some generic homotopy of homotopies $g_{s,t}: S\to I$.
\end{Def}
 Note that a Morse 2-function is determined by a generic homotopy of Morse functions, thus a generic homotopy of 
 Morse 2-functions is determined by a generic homotopy between homotopies of Morse functions. as descibed in the previous section.

A \textit{Reeb complex} is the quotient space  of a manifold under some Morse 2-function. We can visualize the 
singularities in the generic homotopy of Morse 2-functions by understanding Reeb complexes.
Since all but finitely many slices of a Reeb complex are locally Reeb graphs, the definition of a Reeb complex is similar to the definition
of a Reeb graph. 
\begin{Def}\label{rc}
Given a compact, closed, orientable 3-manifold $M$ and a Morse 2-function $F:M\to \mathbb{R}^2,$ define the equivalence
relation $\sim$ on M by $x\sim y$ whenever $x,y\in M$ are in the same component of a preimage of a point in $\mathbb{R}^2$.
Similar to the two-dimensional case, there is a Stein factorization composing maps from $M$ to the \textit{Reeb complex} $\mathcal{RC}=M/\sim$ and from
$\mathcal{RC}$ to $\mathbb{R}^2$ such that the composition $M\to \mathcal{RC}\to \mathbb{R}^2$ is $F$.
\end{Def}

In general, a Reeb complex is not a manifold, but is homeomorphic to a 2-dimensional finite CW complex.
See~\cite{Jesse} for a discussion  of each type of critical point. For a better understanding of
the local structure of a Reeb complex and the upcoming discussion of the P-complex, we summarize that discussion here. In the rest of 
this section, we always assume that the smooth arcs are locally reparametrized as vertical arcs.

 \begin{figure}[ht]
\centering
\includegraphics[width=.6\textwidth,height=.2 \textwidth]{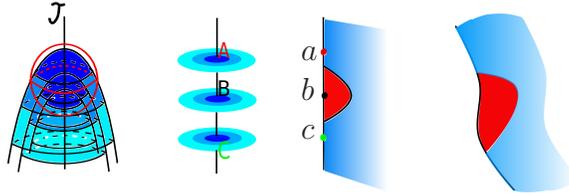}
\put(-105,50){$a$}
\put(-105,35){$b$}
\put(-105,20){$c$}
\caption{A definite fold edge, the second left figure is a reparametrized figure of the  left one, while the 
 second right figure is a reparametrized figure of the right one.}
\label{fig14}
\end{figure}

Case 1: definite fold points, see Figure \ref{fig14}. Let $a,b,c$ be the images of definite points $A,B,C$. The preimage of a neighbourhood $U_b$ (the disk in the 
second right figure) is shown on the left (inside the circle). The second left figure is a reparametrized 
result of the left figure, i.e., flatten each level surface and make each level curve a perfect circle. One can imagine 
that the preimage of $U_b$ in the second left figure is a shape of an American football.

 \begin{figure}[ht]
\centering
\includegraphics[width=.4\textwidth,height=.2 \textwidth]{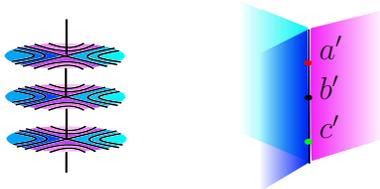}
\put(-25,50){$a'$}
\put(-25,35){$b'$}
\put(-25,20){$c'$}
\caption{An indefinite fold edge, the left figure is a reparametrized figure of a collection of saddles surfaces.}
\label{fig15}
\end{figure}

Case 2: indefinite fold points, see Figure \ref{fig15}. Let $a',b',c'$ be the images of indefinite fold points in the Reeb complex. If
$b'$ is not a crossing (i.e.,the intersection of two fold edges), the preimage of $b'$ is a figure 8, i.e., the wedge sum of two circles. 
The neighbourhood $U_{b'}$ is of the shape that
three disks intersect at a common boundary arc. Let $T_{b'}$ be the intersection of a level set at $b'$ with $U_{b'}$, 
which is $b'$ with three edges associated to it. The preimage of $T_{b'}$ is a level surface on the left of
Figure \ref{fig15}.

\begin{figure}[ht]
\centering
\includegraphics[width=.3\textwidth,height=.2 \textwidth]{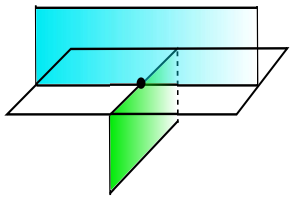}

\caption{The neighbourhood of an entangled crossing in the $\mathcal{RC}$.}
\label{figcross}
\end{figure}
Case 3: crossing. If $b'$ as in the previous paragraph is a crossing, we can choose a neighbourhood $U_{b'}$ so that it contains only one crossing since the number of crossings is finite.
Note that by the previous paragraph, the preimage of a non-crossing indefinite fold point is a graph of one valence-four vertex with two
edges, so the preimage of a crossing is a graph of two valence-four vertices with four edges. There are two cases. If the graph is 
disconnected, i.e., it contains two wedge sums of  two circles, then the local behavior of the $\mathcal{RC}$ is as in the non-crossing case.
Bachman and Schleimer ~\cite{BS} call this an \textit{unentangled crossing}.
If the graph is connected, then the two valence-four vertices are mapped to the same point $b'$ in the $\mathcal{RC}$, and we get a
valence-four vertex $b'$ and six two cells adjacent to it,  as in Figure \ref{figcross}. This $b'$ is called an \textit{entangled crossing}.

 \begin{figure}[ht]
\centering
\includegraphics[width=.8\textwidth,height=.2 \textwidth]{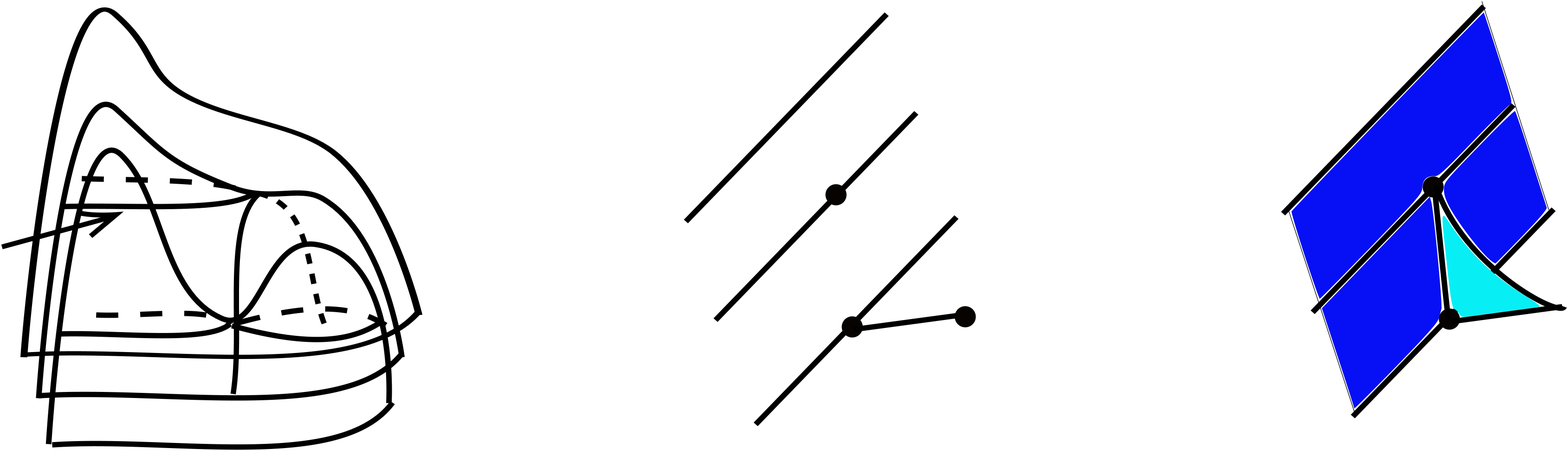}
\put(-143,45){$b''$}
\put(-140,20){$c''$}
\put(-330,40){preimage}
\put(-320,30){of $b''$}
\caption{A neighbourhood of the image of a cusp point. The middle figure shows the Reeb graphs of three surfaces on the left,
the right figure is a collection of Reeb graphs in the Reeb complex.}
\label{fig16}
\end{figure}
Case 4: cusp points, see Figure \ref{fig16}. The preimage of middle edge in the middle figure is the middle level surface on the left.
The  preimage of $b''$, as suggested in the figure, contains a cusp point. The neighbourhood $U_{b''}$ of $b''$ is the right of 
Figure \ref{fig16}.

\subsection{Induced moves}\label{singularities}
The singularities of Morse 2-functions induce singularities of the Reeb complexes. In this section we will explain
how to use some of the singularities in a generic homotopy between Morse 2-functions listed in Section \ref{secmorse2} to eliminate some singularities of Reeb complexes. Namely, we want
to eliminate those definite fold edges and cusp points, and reduce the intersections of indefinite fold edges,
so that the Reeb complex becomes an object called a P-complex, which we will define
in Section \ref{sec3}. All of these moves are called \textit{induced moves} of Morse 2-functions.

Move 1: Add or eliminate two crossings that represent a cancelling pair, as in Figure \ref{fig2cancell}. Two indefinite
fold edges in an $\mathcal{RC}$ intersect at two points, and no other crossings appear in a neighbourhood of figure $(a)$
in Figure \ref{fig2cancell}. These two crossings can be eliminated by pulling the two folds away from each other 
so that the crossings move towards each other first, merge, then disappear. This process can be realized by 
adjusting the Morse 2-function so that the Cerf graphics change as shown in $(a)\to(c)$. 
Figure $(a')$ to $(c')$ are the preimages of $(a)$ to $(c)$ in the $\mathcal{RC}$.
\begin{figure}[ht]
\centering
\includegraphics[width=.8\textwidth,height=.3 \textwidth]{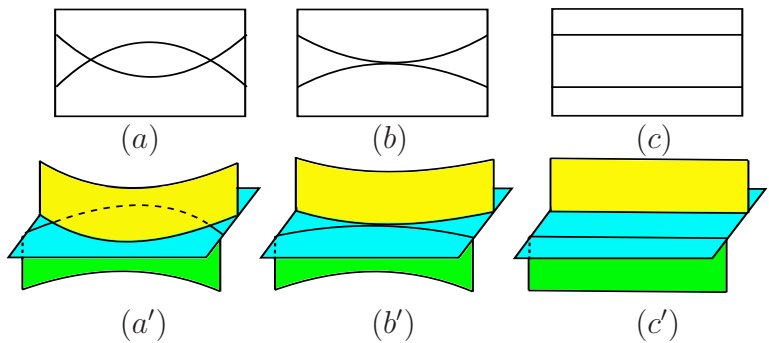}
\put(-245,55){$(a)$}
\put(-150,55){$(b)$}
\put(-50,55){$(c)$}
\put(-245,-15){$(a')$}
\put(-150,-15){$(b')$}
\put(-50,-15){$(c')$}
\caption{Eliminate a cancelling pair.}
\label{fig2cancell}
\end{figure}

\begin{figure}[ht]
\centering
\includegraphics[width=.95\textwidth,height=.4 \textwidth]{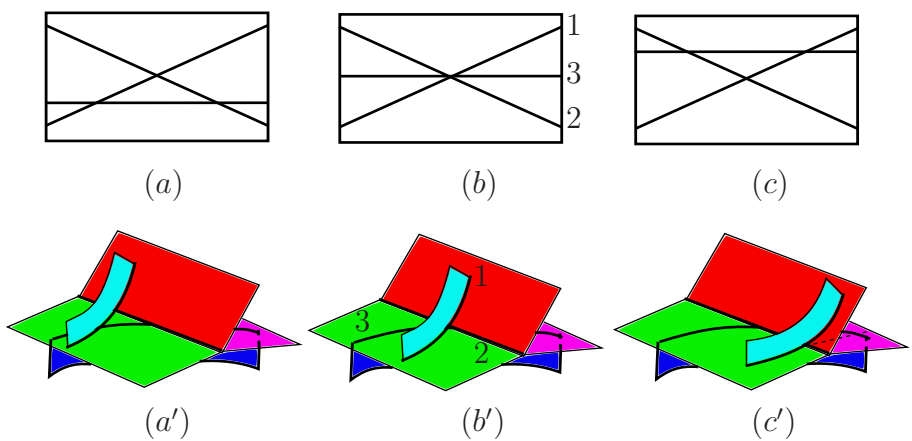}
\put(-290,75){$(a)$}
\put(-170,75){$(b)$}
\put(-60,75){$(c)$}
\put(-290,-15){$(a')$}
\put(-170,-15){$(b')$}
\put(-60,-15){$(c')$}
\put(-165,40){1}
\put(-165,11){2}
\put(-210,23){3}
\put(-130,135){1}
\put(-130,100){2}
\put(-130,117){3}
\caption{Swipe a fold edge across a crossing. In order to see the intersection points clearly, we draw a real line for the intersection
between the dark blue 2-cell and other 2-cells. We label the three fold edges with numbers in two of the subfigures, other subfigures
have the same labels.}
\label{figr3}
\end{figure}

Move 2: Swipe a fold edge across a crossing, as in Figure \ref{figr3}. Three indefinite fold edges pairwisely intersect
in a Reeb complex as shown in ($a'$). Their images under a Morse 2-function is shown in ($a$). In order to describe things clearly,
we label the three fold edges as follows: fold edge 1 is the one with positive slope in Figure \ref{figr3},
fold edge 2 is the one with negative slope, fold edge 3 is the horizontal one.
When fold edge 3 in ($a$) moves up to the position in ($c$), passing through the crossing as shown in ($b$), we can 
push the dark blue 2-cell across the crossing of the other two fold edges in the Reeb complex. Note that one of the three intersection
points in ($c'$) is ``unentangled'': fold edge 1 and fold edge 3 don't intersect in the Reeb complex, but the projection of fold edge 1
on the pink 2-cell intersect fold edge 3 at some point. Thus the intersection of fold edge 1 and 3 in $(c)$ is an unentangled crossing. 
This move corresponds to the P-move of 2-3 type, which we will further discuss in Section \ref{secpmove}.

 \begin{figure}[ht]
\centering
\includegraphics[width=.5\textwidth,height=.3 \textwidth]{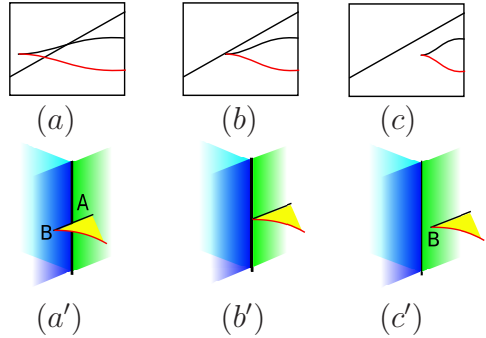}
\put(-170,60){$(a)$}
\put(-100,60){$(b)$}
\put(-40,60){$(c)$}
\put(-170,-15){$(a')$}
\put(-100,-15){$(b')$}
\put(-40,-15){$(c')$}
\caption{The red edges are definite fold edges while the black edges are indefinite fold edges, point $A$ is the intersection
of two indefinite fold edges while $B$ is a cusp point.}
\label{fig17}
\end{figure}

Move 3: Eliminate the intersection of a cusp and an indefinite fold edge, as in Figure \ref{fig17}. Two indefinite fold edges in a Reeb complex
intersect at a point $A$, and one of them is adjacent to a definite fold edge at a cusp point $B$. This local picture corresponds to
Figure \ref{3move}(c). Point $A$ can be eliminated if we adjust the Morse 2-function by a family of homotopies
(i.e., from $(a)$ to (c) in Figure \ref{fig17}) so that the cusp is moved away from the indefinite fold edge.
Figures $(a')$ to $(c')$ are preimages of $(a)$ to $(c)$ in a Reeb complex.
\begin{figure}[ht]
\centering
\includegraphics[width=.6\textwidth,height=.2 \textwidth]{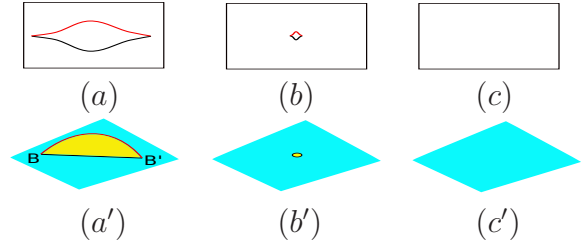}
\put(-190,33){$(a)$}
\put(-115,33){$(b)$}
\put(-40,33){$(c)$}
\put(-190,-15){$(a')$}
\put(-115,-15){$(b')$}
\put(-40,-15){$(c')$}
\caption{The red edges are definite fold edges while the black edges are indefinite fold edges,
$B$ and $B'$ are the intersections of definite and indefinite fold edges.}
\label{fig18}
\end{figure}

Move 4: Eliminate two cusp points by an eye shaped cancellation, see Figure \ref{fig18}. Two cusp points $B$ and $B'$ in
$\mathcal{RC}$ are connected as shown in ($a'$), connected by a definite fold edge and an indefinite fold edge. There is
no other indefinite fold edge crossing the indefinite fold edge connecting $B$ and $B'$. (If there is one,  we can eliminate 
the intersection using Move 1.) The corresponding Cerf graphics show that the eye shape singularities are elimiated
by adjusting the Morse 2-function, which means the two edges connecting $B$ and $B'$ together with the region 
they cobound will disappear. This move corresponds to eliminating
the valence-one vertices and their adjacent valence-three vertices in the Reeb graph slices, which we will discuss in Section \ref{sec4}.

Move 5: Eliminate two cusp points by  merging a death-birth pair, as in Figure \ref{fig19}. Two cusps pointing to each other
can be eliminated by merging their cusp points, which can be realized by adjusting the Morse 2-function to cancel
a death-birth pair of cusp points. After this move we obtain $(c')$, which contains definite fold edges.
\begin{figure}[ht]
\centering
\includegraphics[width=.6\textwidth,height=.2 \textwidth]{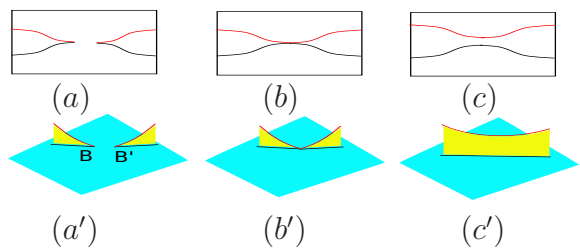}
\put(-200,35){$(a)$}
\put(-120,35){$(b)$}
\put(-45,35){$(c)$}
\put(-200,-15){$(a')$}
\put(-120,-15){$(b')$}
\put(-45,-15){$(c')$}
\caption{Cancel a pair of cusp points}
\label{fig19}
\end{figure}
\begin{figure}[ht]
\centering
\includegraphics[width=.6\textwidth,height=.4 \textwidth]{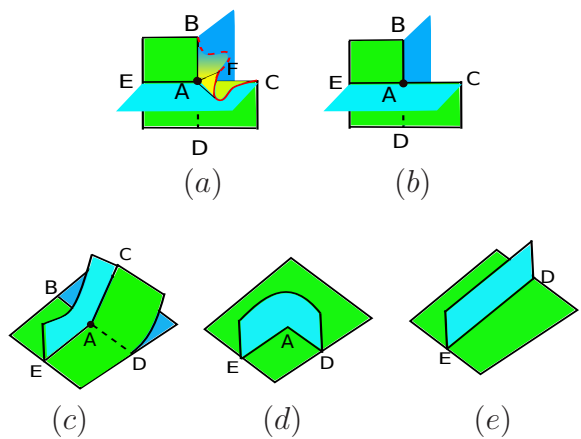}
\put(-150,75){$(a)$}
\put(-70,75){$(b)$}
\put(-200,-15){$(c)$}
\put(-120,-15){$(d)$}
\put(-40,-15){$(e)$}
\caption{Eliminate a swallowtail singularity, the corresponding Cerf graphics are shown in Figure ~\ref{swallowtail},
with $s$ direction reversed.}
\label{fig20}
\end{figure}

Move 6: Create or eliminate a swallowtail singularity, as in Figure \ref{swallowtail} and Figure \ref{fig20}. Figure \ref{fig20}(a)
corresponds to the highest level in Figure \ref{swallowtail}, where point $A$ is a vertex of valence four. As the level
goes down in Figure \ref{swallowtail}, the 2-cell bounded by the definite fold edge and two branches of $A$ in (a) is getting
smaller and smaller, it disappears until the middle level in Figure \ref{swallowtail}, we obtain (b). Drawings (c) and (d)  are
simplifications of (b) since two branches $AB$ and $AC$ contain only valence-two points, they are absorbed by the 2-cells
containing them. We can realize from (d) to (e) by moving the level in Figure \ref{swallowtail} from middle to bottom,
which means erasing the inessential valence-two vertex.

The uniqueness theorem of Morse 2-functions given by Gay-Kirby ~\cite{GK} implies the following lemma:

\begin{Lem}
 Any two Reeb complexes of a Manifold defined by two Morse 2-functions are related to each other by a finite sequence of the induced moves
 descibed above.
\end{Lem}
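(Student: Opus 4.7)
The plan is to invoke the Gay–Kirby uniqueness theorem for Morse 2-functions, which guarantees that any two Morse 2-functions $F_0,F_1: M\to \mathbb{R}^2$ defining the given Reeb complexes can be joined by a generic homotopy $\{F_s\}_{0\le s\le 1}$ between Morse 2-functions. By the very definition of a generic homotopy between Morse 2-functions given in Section \ref{secmorse2}, such a homotopy is locally modeled on a generic homotopy of homotopies $g_{s,t}$ of Morse functions on a surface, and therefore the non-generic values $s_*\in [0,1]$ form a finite set. Away from these values, the associated Reeb complex changes only by ambient isotopy, so it suffices to understand what happens at each $s_*$.

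At each singular parameter $s_*$, the local model $g_{s,t}$ undergoes exactly one of the codimension-2 events enumerated in Section \ref{sechomo}: a Reidemeister-II fold crossing, a Reidemeister-III fold crossing, a cusp–fold crossing, an eye birth/death, a merge of cusps, a swallowtail birth, or one of the three 2-parameter coincidences. The next step is to match each of these events with a specific induced move from Section \ref{singularities}. Concretely, the Reidemeister-II event corresponds to Move 1 (creation/elimination of a cancelling pair of crossings), the Reidemeister-III event to Move 2 (sliding a fold edge across a crossing), the cusp–fold crossing to Move 3, the eye birth/death to Move 4, the merge of cusps to Move 5, and the swallowtail birth to Move 6. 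The three cases of 2-parameter coincidence do not alter the combinatorics of the Reeb complex beyond composing two codimension-1 events simultaneously, and by perturbing the homotopy slightly we may assume the two events occur at distinct parameters, reducing to sequences of the previously listed moves.

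Putting these pieces together, the proof proceeds as follows: order the finitely many singular parameters $0<s_1<\dots<s_k<1$; on each interval $(s_i,s_{i+1})$ the Reeb complex is constant up to ambient isotopy; and across each $s_i$ the Reeb complex changes by exactly one induced move (or, after perturbation, by a small sequence of induced moves in the case of a 2-parameter coincidence). Concatenating these finitely many moves yields a finite sequence of induced moves that transforms the Reeb complex of $F_0$ into the Reeb complex of $F_1$.

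The main obstacle I expect is the bookkeeping in the matching step: one must verify carefully that each of the six local events in the generic homotopy of homotopies really does produce the corresponding picture in the Reeb complex, which requires tracking the Stein factorization through each local model given by replacing $x_{n+1}^2$ in formula (\ref{nondeg}) with the prescribed perturbation. The 2-parameter coincidences are a secondary subtlety, since the excerpt explicitly notes they are not fully used for Morse 2-functions; handling them via a small generic perturbation of the homotopy $\{F_s\}$ to separate the coincident events should be enough to avoid introducing any move outside the list of induced moves.
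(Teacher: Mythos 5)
Your proposal is correct and follows essentially the same route as the paper: invoke the Gay--Kirby uniqueness theorem to connect the two Morse 2-functions by a generic homotopy with finitely many singular parameters, and match each singularity type with the corresponding induced move (the paper states exactly this idea, leaving the details to the reader). Your additional care with the 2-parameter coincidences, handled by perturbing to separate the events, is a reasonable elaboration of the same argument rather than a different approach.
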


The proof of this lemma is straight forward, the main idea is that any two Morse 2-functions are related by a generic homotopy and each
induced move is obtained from one type of singularities in a generic homotopy. We leave the details to the readers. This lemma gives us 
a method to eliminate some of the singularities, thus change the Reeb complex, which leads to the discussion of the next section.

\section{P-graph and P-complex}\label{secpcomplex}

\subsection{P-graph}\label{sec4}
Hatcher and Thurston discussed combinatorial graphs corresponding to pants decompositions of surfaces in 
the appendix of~\cite{HT}. They defined the graph in a way that is relatively easy to understand: the preimage
of a vertex in the graph is a pair of pants and the preimage of an edge is a circle in the pants decomposition. 
However, they cannot specify the preimage of each point in the edges from this definition. In order to
obtain a similar quotient map as in the definition of Reeb graph, we need to define this graph and the map from a surface
to such a graph in a slightly different
way, and call it \textit{P-graph}. This graph was mentioned as $G'$ in Section \ref{morsereeb}. As in Figure \ref{fig10}, we first choose two different points in the interior of
a pair of pants, and choose three pairwise non-homotopic simple arcs joining them, forming a \textit{$\theta$-graph}. The 
choice of the $\theta$-graph is unique up to isotopy. Let $\Theta$ denote the $\theta$-graph.

\begin{figure}[ht]
\centering
\includegraphics[width=.6\textwidth,height=.3 \textwidth]{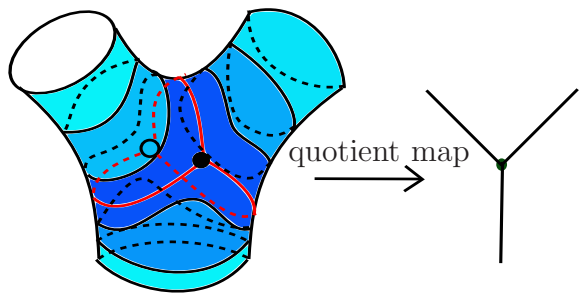}
\put(-110,50){quotient map}
\caption{The preimage of the vertex is the $\theta$-graph.}
\label{fig10}
\end{figure}
\begin{Def}
  Let $S$ be a compact, orientable surface and $\mathcal{P}$ a pants decomposition of $S$. Let 
  $S\backslash \mathcal{P}=\{Y_1,...,Y_N\}$. For each pair of pants $Y_i$, choose a $\theta$-graph $\Theta_i$ in $Y_i$, 
  then $Y_i\backslash \Theta_i$ contains three open annuli. For each open annulus in $S$, we reparameterize it as $C\times I$ where $C$
  is an essential loop and $I=(0,1)$. Define the equivalence relation on points in $S$ by $x\sim_P y$ whenever $x,y\in S$ are in the same $\Theta_i$
  or $x$ and $y$ are in $C\times \{t\}$ for some $t$ in an annulus. The \textit{P-graph} of $S$ is the quotient of $S$ under the 
  relation $\sim_P$. 
 \end{Def}

The preimage in $S$ of a vertex in a P-graph is a $\theta$-graph, and the preimage in $S$ of a point on each edge of a P-graph is a loop as in 
Figure \ref{fig10}. Each loop in $\mathcal{P}$ is mapped to a point under this quotient.
The image of a pair of pants under the relation $\sim_P$ is a shape of Y. 
By definition, the pair-of-pants are  in one-to-one correspondence with  Y-shapes, therefore a compact orientable surface with a pants 
decomposition defines a unique P-graph. We can form a P-graph for a surface by gluing together Y-shapes while gluing together the corresponding
pairs of pants. Here we need to pay attention to the following:
1)If we glue one boundary component of a pair of pants to one boundary component of another pair of pants, we need to connect one branch
of their Y-shapes together;
2)If we glue two boundary components of a pair of pants together, we need to connect two branches of the Y-shape. 
For closed surfaces, their P-graphs contain only valence-three vertices, for 
surfaces with boundaries, their P-graphs also contain some valence-one vertices. The preimages of the valence-one 
vertices are the boundary components of the surface. Figure \ref{fig12} shows examples
of P-graphs corresponding to given pants decompositions on a 
genus 4 closed surface.

\begin{figure}[ht]
\centering
\includegraphics[width=.6\textwidth,height=.3 \textwidth]{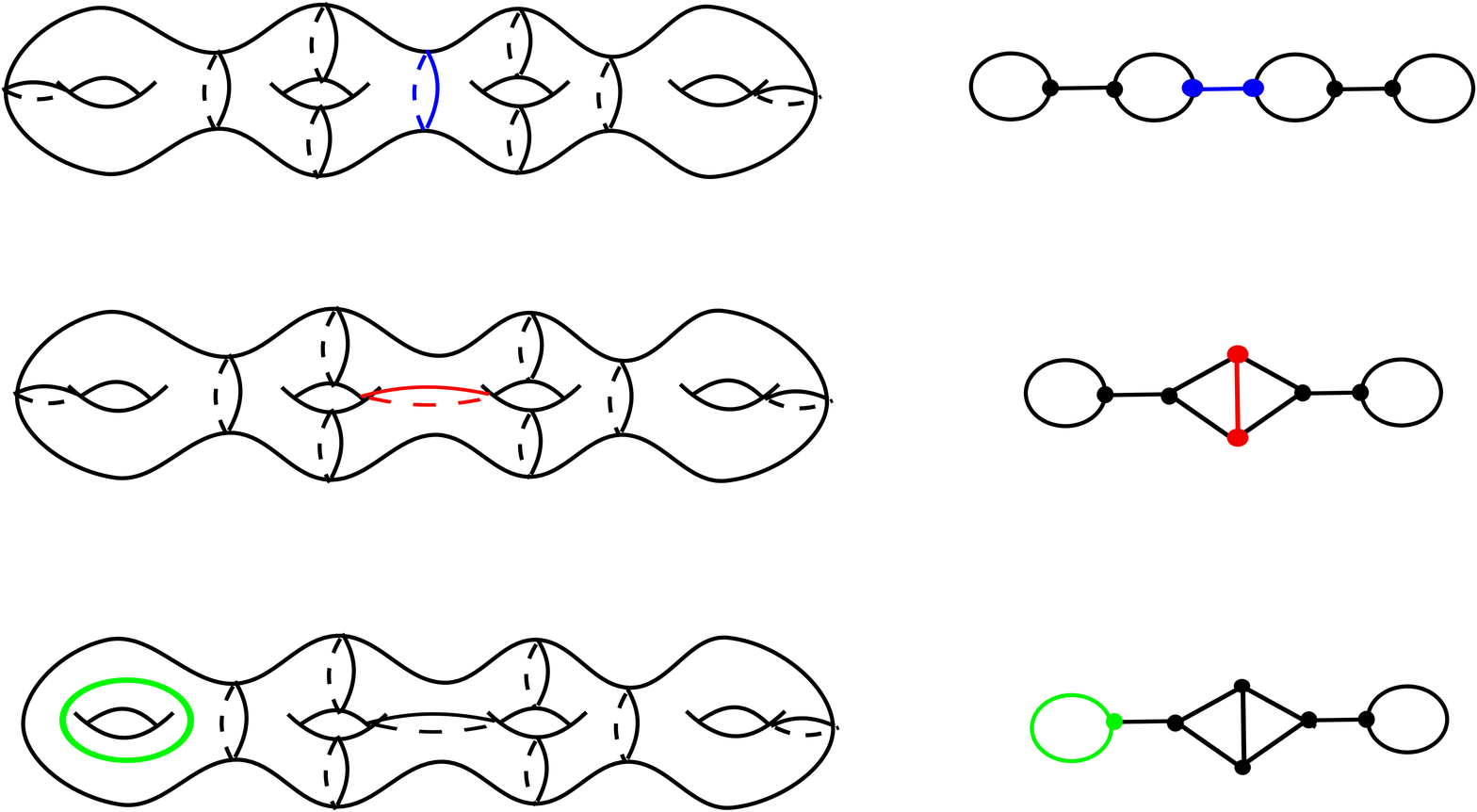}

\caption{Pants decompositions of a (4,0)-surface and their P-graphs,}
\label{fig12}
\end{figure}

We next want to discuss how to convert a Reeb graph into a P-graph for the same surface. All we need to do is to discuss the relations
between vertices and edges. Let's look at the vertices of Reeb graphs first. As we noted after Definition ~\ref{reeb}, there are two types of
valence-one vertices in the Reeb graph: images of boundary components and local extrema. We call them \textit{boundary vertices} and \textit{extremal vertices} respectively.
There are also two types of valence-three vertices: the \textit{trivial-saddle vertices} are those whose preimages in $S$ are trivial saddles, others 
are \textit{inner vertices}. A P-graph also has valence-one and valence-three vertices, but each of them has only 
one type. By the arguments in Section \ref{morsereeb}, we can show that from a Reeb graph to a P-graph, boundary vertices and 
inner vertices in the Reeb graph are in one-to-one correspondence with valence-one and valence-three vertices in the 
P-graph, while all extremal vertices and trivial-saddle vertices disappear. For a better understanding, we present a
combinatorial way to show this correspondence. 

\begin{figure}[ht]
\centering
\includegraphics[width=.6\textwidth,height=.2\textwidth]{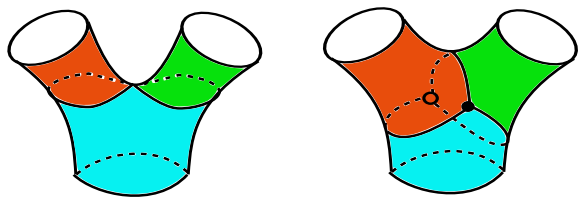}

\caption{An illustration of the preimages of the one-to-one correspondence between inner vertices of Reeb graphs and valence-three vertices of P-graphs}
\label{fig13}
\end{figure}

If we glue a disk to one of the boundary components of a pair of pants, the $\theta$-graph can be homotoped
to a circle, and we need to elimiate one edge in the corresponding shape of $Y$. This explains why the extremal vertices and trivial-saddle
vertices of Reeb graphs don't exist in P-graphs. The correspondence between boundary vertices is straightforward, 
hence we only need to descibe
the one-to-one correspondence between inner vertices in a Reeb graph and the valence-three vertices in the corresponding P-graph (see Figure \ref{fig13}): 
 Take a neighbourhood (i.e., a pair of pants) on the surface for the preimages of both vertices (a figure-eight loop and a $\theta-$graph) 
 respectively.
 For one direction, slide the two points in a $\theta$-graph along one of the three arcs until
 they merge, then this arc disappears and the other two arcs together with the merged vertex form a figure-eight loop, 
 which is (a component of) a level set that contains a saddle point.
  For the reverse direction, take a smaller neighbourhood of the 
 figure-eight loop (smaller than the one shown in Figure \ref{fig13}), choose the boundary component which is sitting on one side of the figure-eight
 loop by itself, take two points from this component and connect them by a simple arc through the vertex of the 
 figure-eight loop. Then we produce a $\theta$-graph. The choice of the arc is unique up to isotopy.
 As for the remaining parts,  both the figure-eight loop and the $\theta$-graph cut the pairs of pants into three
 annuli. This shows the inner points of the edges of a Reeb graph are naturally in
 one-to-one correspondence with the inner points of the edges of the corresponding P-graph 
for the same surface since their preimages are both circles.

The P-graph move
between the first and second rows is induced  by an A-move, so we called it a \textit{P-graph A-move}\index{P-graph! A-move}. It is achieved by
removing the horizontal edge in the middle of the first row P-graph and its endpoints, then adding the vertical middle
edge with its endpoints to obtain the second P-graph. The other P-graph move between second and third rows is induced
by an S-move, so we call it a \textit{P-graph S-move}\index{P-graph! S-move}. It is achieved by removing the left one-endpoint edge
with its vertex, then adding a new one-endpoint edge with a new vertex.

\subsection{Local models of P-complexes}\label{sec3}
In Section \ref{morsereeb}, we showed a way to associate a Morse function to a pants decomposition of a surface, so we want to 
find a similar way to associate a Morse 2-function to a pants-block decomposition for a 3-manifold. 
The Reeb graphs and P-graphs connect Morse functions to pants decompositions, 
and in Section \ref{secmorse2}, we saw that a Reeb complex is locally a stack of Reeb graphs, so the Reeb complex could be a useful tool.
However, a Reeb complex may have cusp singularities (see Figure \ref{fig16}) and other singularities which are not what we want. 
So besides Reeb complexes, we still need to construct another complex, called \textit{P-complex},
based on P-graphs. Section \ref{singularities} gives a list of induced moves which we can use to turn a Reeb complex into a P-complex.
In order to explain things naturally, we first consider the cases of surface-cross-interval. Then we will introduce the general definition
of a P-complex.

Consider a surface-cross-interval $S\times I$ where $I=[0,1]$. If the surface $S\times \{t\}$ admits the same pants decomposition for all $t$, then
this surface-cross-interval contains only trivial pants blocks. Let's consider the nontrivial cases. Without loss of generality, we assume 
$S\times I$ is one of the two fundamental blocks defined in Definition \ref{defpantsblock}. There exists $0< t_*<1$ so that 
the surface $S\times \{t\}$ admits a pants decomposition $\mathcal{P}_0$ for $t\in [0, t_*)$ and the surface $S\times \{t\}$ admits a pants 
decomposition $\mathcal{P}_1$  for $t\in (t_*,1]$. By definition, $\mathcal{P}_0$ and $\mathcal{P}_1$ differ by a pants move. For each $t\neq t_*$,
$S\times \{t\}$ has a P-graph $P\times \{t\}$. 
For $t=t_*$, $S\times \{t_*\}$ has an \textit{almost P-graph} $P\times \{t_*\}$: $P\times \{t_*\}$ is the same as $P\times \{0\}$
or $P\times \{1\}$ elsewhere except replacing one edge with endpoints by a vertex. We will explain more about almost P-graphs and their
preimages in the next paragraph.
A \textit{stack} of P-graphs is a union of these P-graphs and an almost P-graph such that they are piled together with
respect to $t$, see Figure \ref{fig21} for a local point of view. We also say $P\times \{t\}$ is a \textit{level} of the stack.  In a stack of P-graphs, vertices of P-graphs form edges of the stack, 
edges of P-graphs are contained in the 2-cells of the stack. 
The \textit{vertices} of the stack are the intersections of two or four edges, contained in the almost P-graph. 
Given  two stacks of P-graphs so that the top level of one stack is the same as the bottom level of the other, we can glue these two
stacks together combinatorially along these levels, and still call the resulting object a stack of P-graphs.

\begin{figure}[ht]
\centering
\includegraphics[width=.2\textwidth,height=.2\textwidth]{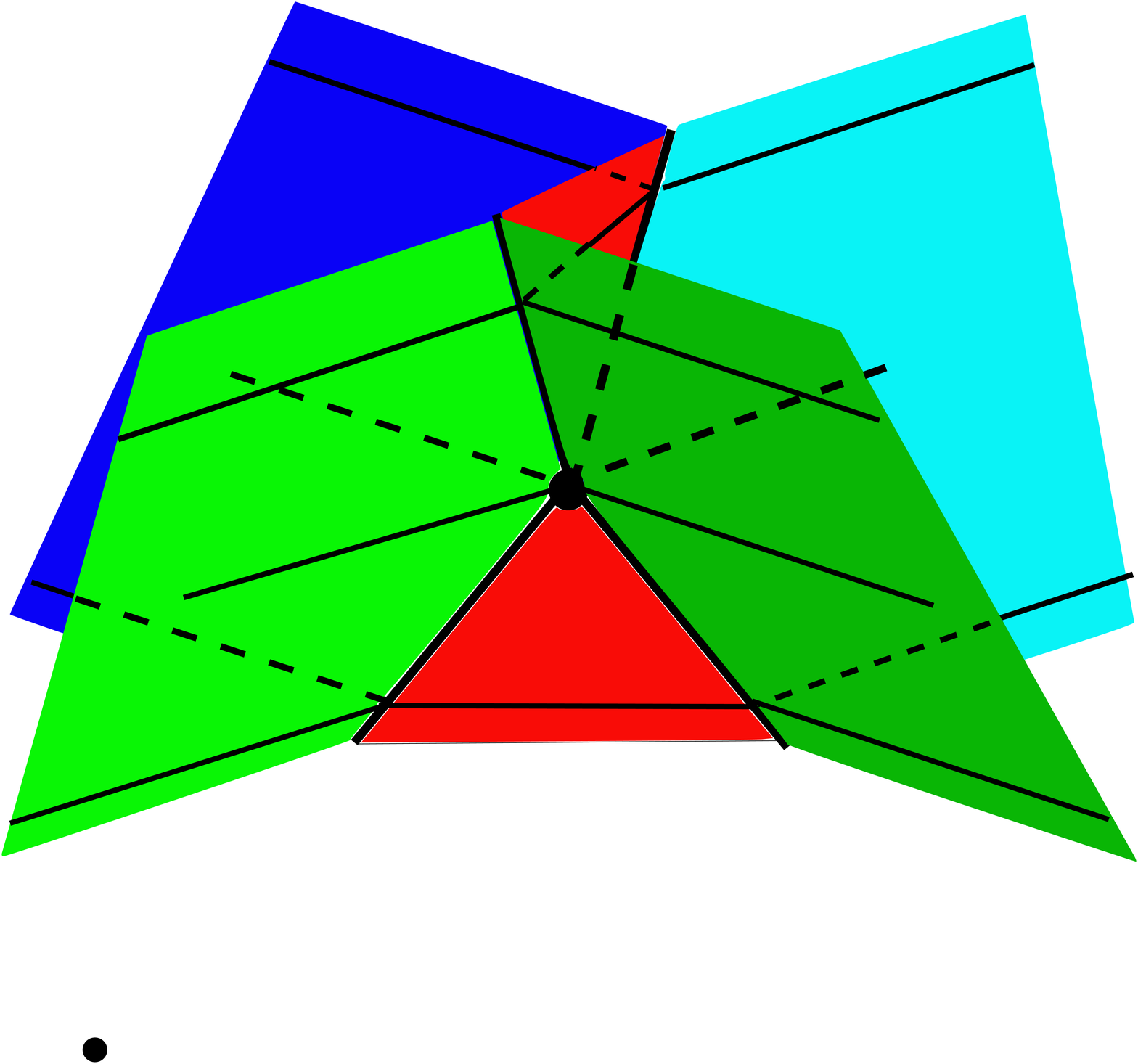}\hspace{2.5cm}
\includegraphics[width=.2\textwidth,height=.2\textwidth]{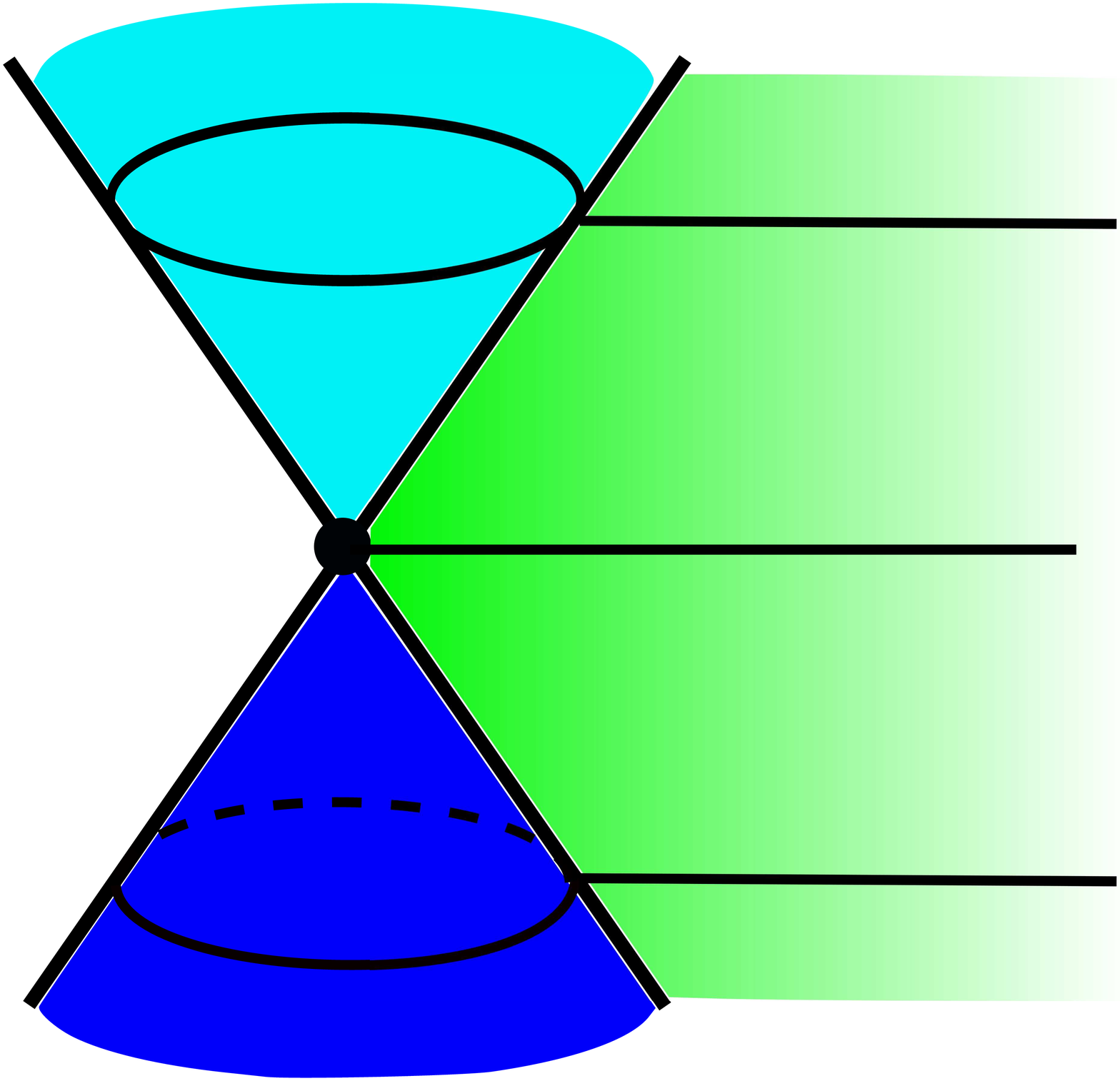}
\put(-200,-10){(a)}
\put(-60,-10){(b)}
\caption{Two stacks of P-graphs, which we will call local P-models below.}
\label{fig21}
\end{figure}

\begin{figure}[ht]
\centering
\includegraphics[width=.8\textwidth,height=.3\textwidth]{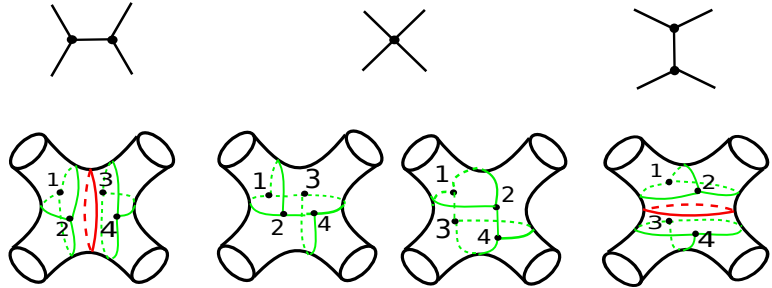}

\caption{An H-I move and the change of the preimages of vertices of P-graphs on the surface.}
\label{himove2}
\end{figure}

Each P-graph is induced by a pants decomposition of a surface, thus pants moves between pants decompositions induce
moves between P-graphs.  Since there are two types of pants moves, we have two types of  moves between P-graphs. 
Given two pants decompositions of a (0,4)-surface differ by an A-move, as the left and right subfigures in the bottom row 
of Figure \ref{himove2}, both have two $\theta$-graphs with labelled vertices, they induce two P-graphs as shown in the figure. 
Suppose the left P-graph in Figure \ref{himove2} is the bottom level as in Figure \ref{fig21}(a), and the right P-graph in Figure \ref{himove2}
is the top level as in Figure \ref{fig21}(a), then the preimage of the almost P-graph is one of the two subfigures in the middle.
These two subfigures are the same except changing the relative positions of vertices 2 and 3. One can imagine that as $t$ goes from 0 to 
$t_*$, the lower arc connecting vertices 1 and 2 and the upper arc connecting vertices 3 and 4 are moving towards each other, and they 
merge as shown in the figure when $t=t_*$. When $t>t_*$, these two arcs appear again and move away from each other, as in right subfigure.
Since the two P-praghs look like a letter H and a letter I, so we call this move between P-graphs an \textit{H-I move}.

As for the other type, given two pants decompositions of a (1,1)-surface differ by an S-move,  as the left and right subfigures in the second row 
of Figure \ref{hourglass2}, both have a $\theta$-graph, they induce two P-graphs as shown in the figure. Start from the left P-graph, 
the preimage of its vertex is the $\theta$-graph on the left pants decomposition.  In order to visualize the process of the move better, 
we again use a loop to represent the boundary of the water surface, similar to the idea in Figure \ref{crossmove2}. Leaning the punctured 
torus until second left figure, at this moment the preimage of the vertex becomes a graph $G_0$  with four vertices connected by seven arcs. At the
same time the water surface becomes the shadowed area, so its boundary overlaped with $G_0$.  This means that the points in the boudary of 
the shadowed area are also the preimages of the vertex in the almost P-graph. Furthermore, every point under the graph $G_0$ in the 
punctured torus is in some
level set which overlaped with $G_0$, thus is a preimage of the vertex in the almost P-graph. This explains why there is only one vertex 
in the almost P-graph. After this moment, continue leaning the punctured torus, the boundaries of the water surface break into two components
as described in Figure \ref{crossmove2}. This process resolves the vertices 3 and 4 in graph $G_0$ and turns the graph as in the second right figure.
The right figure is homotopic to the second right with one of the boundaries of water surface  is a loop in the pants decomposition.
This move between P-graphs,
induced by an S-move, is called an \textit{hourglass move}, since the shape of the stack looks like an hourglass.

\begin{figure}[ht]
\centering
\includegraphics[width=.8\textwidth,height=.3\textwidth]{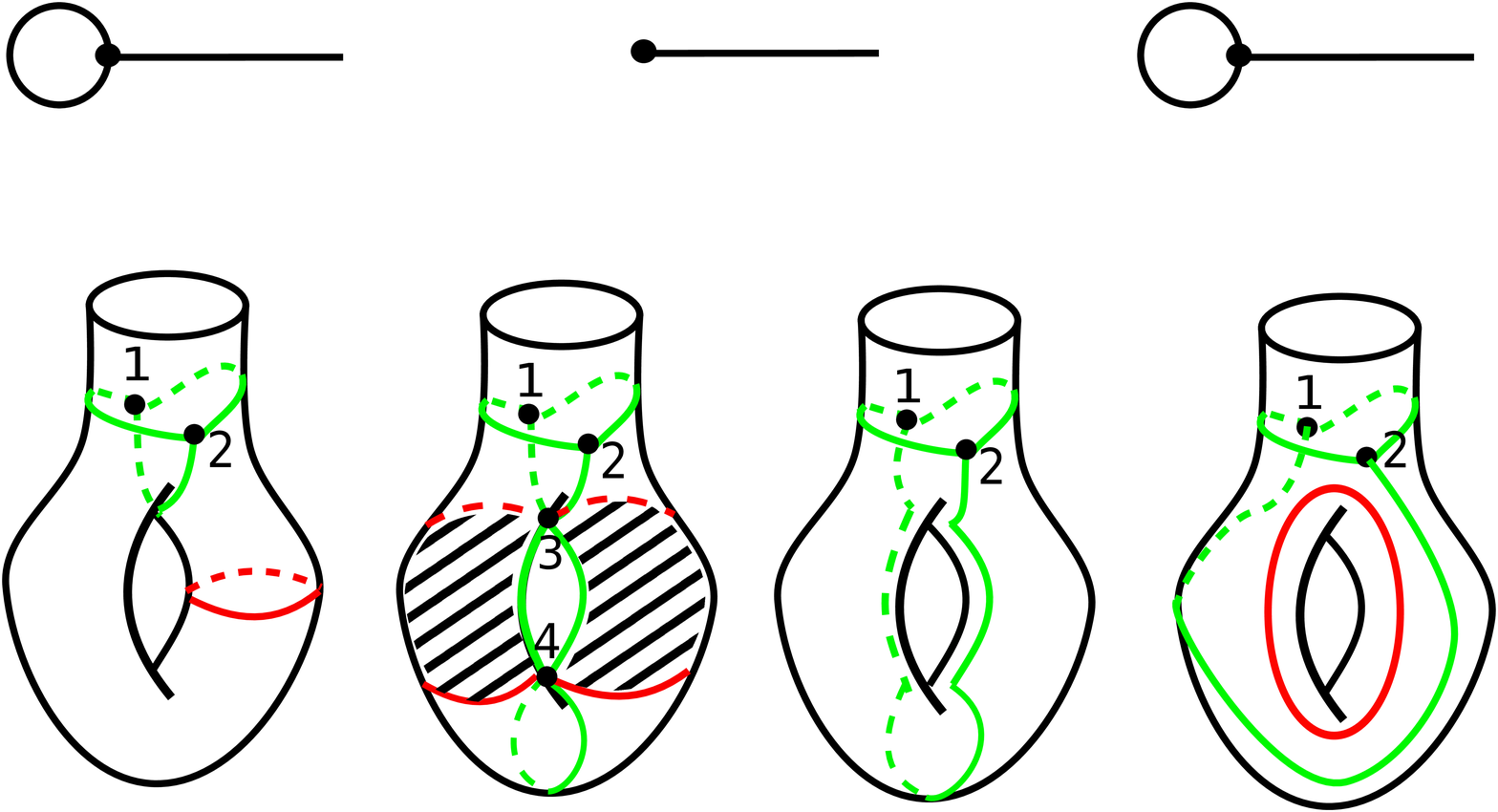}

\caption{An hourglass move and the change of the preimages of vertices of P-graphs on the surface.}
\label{hourglass2}
\end{figure}

\begin{Def}\label{def2}
 We say a stack of P-graphs is a \textit{local P-model} if

 (1) All but finitely many levels of the stack are P-graphs and

 (2) For those levels which are not P-graphs, they are almost P-graphs, i.e., they contain vertices of the stack.
\end{Def}

We say those levels which contain vertices of a local P-model are \textit{critical levels},
others are \textit{regular levels}. Each critical level contains only one vertex. We take small neighbourhoods of 
each critical levels so that they don't pairwise intersect.
As shown in Figure \ref{fig21} (a),  a critical level contains a 
valence-four vertex. This level separates its neighbourhood into two parts so that two regular
levels in different parts differ by an H-I move. As for Figure \ref{fig21}(b),
a critical level contains a valence-two vertex,  separates
its neighbourhood into two parts so that the regular levels in different parts differ by an hourglass move. Two
regular levels $P\times \{t_i\}$ and $P\times \{t_j\}$ are equivalent if they are in between two adjacent critical levels.

Using the two models in Definition \ref{def2}, we now want to discuss the local models that we will use below to define
a P-complex, i.e., the preimage in $M$ of each point in a local P-model under the
quotient map. There are three different types of points: the points in the interior of 2-cells, the points in the interior
of edges, and the vertices. We will call them type-I, type-II and type-III points, respectively.

For a type-I point $p_1$, let $V_1=U_1\times S^1$, where $U_1$ is an open disk. Let $f_1:V_1\to U_1$ be a projection map.  $p_1$  is an 
interior point of $U_1$, thus the preimage under $f$ of $p_1$ in $V_1$ is a circle. 

\begin{figure}[ht]
\centering
\includegraphics[width=.3\textwidth,height=.2\textwidth]{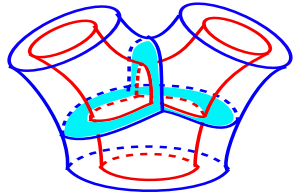}

\caption{Preimage of a neighbourhood of a type-II point. The thicken $\theta$-graph is the preimage of $U_2'$
in $M$.}
\label{fig22}
\end{figure}

For a type-II point $p_2$, let  $U_2$ be a valence-three graph$\times I$, and $V_2$ be a pair-of-pants$\times I$, as shown in  Figure \ref{fig22}.
Define a map $f_2:V_2\to U_2$ as follows: $p_2$ is an interior point in the edge of $U_2$ whose preimage under $f_2$ is a $\theta$-graph$\times \{i\}$
for some $i\in I$. The preimage under $f_2$ of an interior point of $U_2$ is an essential circle in the complement of $\theta$-graph$\times I$ 
in $V_2$. This matches the definition of type-I points and the projection map. Thus the interior points of $U_2$ are type-I points while the 
edge points are type-II points.

A type-III point $p_3$ is defined to be a vertex. A neighbourhood of a vertex in  a local P-model is a stack of P-graphs as in Definition \ref{def2}. There are two cases:
(1) If the vertex is of valence two, take a neighbourhood $U_3$ in the local P-model of $p_3$, as in Figure \ref{fig21}(b).
The lower boundary of $U_3$ in the figure is a part of a P-graph, whose preimage in $M$ is a part of a surface $S_0$ with a
pants decomposition $\mathcal{P}_{S_0}$. The upper boundary of $U_3$ in the  figure  is a part of a P-graph on a surface $S_1$ isotopic to $S_0$.
The preimage of the upper boundary of $U_3$ in $M$ is a subsurface of $S_1$ which is isotopic to the preimage of the upper part in $S_0$ but with a 
different pants decomposition $\mathcal{P}_{S_1}$. Moreover, $\mathcal{P}_{S_0}$ and $\mathcal{P}_{S_1}$ differ by one S-move.
Define a map $f_3$ from a type (1,1)-block to $U_3$, such that the preimage of $p_3$ is the middle figure in the second row of Figure \ref{hourglass2}.
The preimages of edge points and interior points in the 2-cells are $\theta$-graphs and circles,  which match the definition of $f_2$.
Thus the preimage $V_3$ of $U_3$ is a pants block of type (1,1). We call $p_3$ a III-S point. (2) Similarly, 
if $p_3$ is of valence four, the preimage of a neighbourhood of $p_3$, as in Figure \ref{fig21}(a),
is a pants block of type (0,4). We call it a III-A point.

We call $(f_i, V_i, U_i)$ a triple for a type-I, -II or -III point.

The following is the definition of a P-complex:

\begin{Def}\label{defpcomplex}
 Let $M$ be a closed, orientable 3-manifold.Consider a 2-dimensional CW complex $X$ and a map $F:M\to X$. 
We say $X$ is a \textit{P-complex} if for each point $p\in X$, there is a neighbourhood $U$ of $p$ in $X$, with a subset $V\subset M$ such that
$F(V)=U$, and a map $G=F|_V:V\to U$, such that $(G, V, U)$ is a triple of type-I, -II and -III points, up to a homeomorphism. 
We denote a P-complex by $\mathcal{PC}$.
\end{Def}

\begin{Rmk}
Note that this definition explicitly excludes local homeomorphism from $\mathcal{PC}$ to $\mathcal{RC}$ which maps a neighbourhood of a vertex 
in $\mathcal{PC}$ to a local model descibed in Figures \ref{fig2cancell}, \ref{fig17}, \ref{fig18}, \ref{fig19}, \ref{fig20}. 
\end{Rmk}

The following two lemmas reveal the relation between pants-block decompositions and P-complexes of a 3-manifold. Lemma \ref{lem2} also shows the
existence of P-complexes.

\begin{Lem}\label{lem2}
 Every pants-block decomposition of a 3-manifold $M$ defines a unique P-complex.
\end{Lem}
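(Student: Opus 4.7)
The plan is to construct the map $F: M \to X$ locally over the pieces of the pants-block decomposition and then check that these local pictures assemble into a single 2-dimensional CW complex satisfying Definition \ref{defpcomplex}.

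First I would handle the two-dimensional pieces. For each pair of pants $P \in \mathcal{P}_L$, pick a $\theta$-graph $\Theta_P$ in the interior of $P$, unique up to isotopy, and let $F$ collapse $\Theta_P$ to a single point and collapse each of the three annular complementary components $C \times (0,1)$ of $\Theta_P$ to its arc factor, exactly as in the definition of the P-graph. This sends $P$ to a Y-shape whose central vertex is the image of $\Theta_P$, whose three edges are the images of the annuli, and whose three endpoints are the images of the three loops of $\partial P \subset L$. Doing this consistently over all of $\mathcal{P}_L$ already determines $F$ on the 2-skeleton: link components contribute the endpoints (valence-one vertices for free boundary, valence-three vertices where three Y-shapes meet along a loop of $L$).

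Second I would handle the 3-dimensional pieces. For each block $B \in \mathcal{B}$, Definition \ref{defpantsblock} presents $B$ as $S' \times [0,1]$ (with $\partial S' \times [0,1]$ collapsed to $\partial S' \times \{\tfrac12\}$) for $S' = S_{1,1}$ or $S_{0,4}$, with pants decompositions $\mathcal{P}_0$ on $S' \times \{0\}$ and $\mathcal{P}_1$ on $S' \times \{1\}$ differing by a single pants move. Using the $\theta$-graph already chosen in each pair of pants of $\mathcal{P}_0$ and $\mathcal{P}_1$, I would extend $F$ over $B$ by the prescription described for local P-models in Section \ref{sec3}: the regular levels $t \neq t_*$ map to copies of the two P-graphs on either side of the move, and the single critical level $t_*$ maps to an almost P-graph containing one valence-two vertex (in the $S_{1,1}$ case, producing a III-S point, via the hourglass picture of Figure \ref{hourglass2}) or one valence-four vertex (in the $S_{0,4}$ case, producing a III-A point, via the H-I picture of Figure \ref{himove2}). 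By construction, each fiber of $F|_B$ is either a circle, a $\theta$-graph, a figure-eight, or the graph $G_0$ appearing in the middle of Figure \ref{hourglass2} — exactly the preimages required for triples of type I, II, and III.

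Third I would show the local definitions glue. On a pair of pants $P$ shared by two adjacent blocks $B_1, B_2$, both $F|_{B_1}$ and $F|_{B_2}$ restrict to the Y-shape collapse determined by the chosen $\theta_P$, so the two block-level maps agree on the interface. Similarly, along a link component of $L$, the maps agree because each incident pair of pants contributes the same endpoint of its Y-shape. Therefore the block-level maps assemble into a well-defined map $F: M \to X$ where $X$ is obtained by taking the disjoint union of the local P-models (one per block) and gluing them along Y-shapes (one per pair of pants) and endpoints (one per link component). Checking that every point of $X$ has a neighborhood $U$ and preimage $V = F^{-1}(U)$ matching one of the three triples reduces to a finite case analysis over interior 2-cell points, interior edge points of a Y-shape, link-component interior points, and the two types of central vertices inside blocks — each of which is local P-model data we already verified.

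Finally, for uniqueness, the whole construction depends only on the choice of $\theta$-graph in each pair of pants, and these choices are unique up to ambient isotopy in $P$; any two choices yield homeomorphic P-complexes by pushing the isotopies through the collapsing map. I expect the main obstacle to be the bookkeeping in the gluing step, namely verifying that at each vertex of $X$ the local P-model structure from inside the block is compatible with the three or four Y-shapes approaching that vertex from the adjacent pairs of pants — in particular, matching the P-graphs on the top and bottom faces of a block with the P-graphs induced by the collapses in $\mathcal{P}_L$ on the adjoining blocks. Once this compatibility is checked, Definition \ref{defpcomplex} is satisfied and the P-complex is produced uniquely up to homeomorphism.
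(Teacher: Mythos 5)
Your construction is essentially the paper's own proof: each fundamental block is sent to a local P-model (the neighbourhood of a type-III point, hourglass or H-I as the block is of type $(1,1)$ or $(0,4)$), these are glued along the Y-shape/P-graph data of the shared pairs of pants, and uniqueness comes from the fact that each block determines its local model canonically (the $\theta$-graphs being unique up to isotopy). Your write-up is somewhat more explicit than the paper about defining the map and checking compatibility at the interfaces; the only slips are cosmetic and harmless (a loop of $L$ need not have exactly three incident pairs of pants, and its image is an interior point of a 2-cell rather than a vertex of the complex, and the critical-level fibers are the merged graphs of Figures \ref{himove2} and \ref{hourglass2} rather than figure-eights).
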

\begin{proof}
 Let $(L,\mathcal{P}_L,\mathcal{B})$ be a pants-block decomposition of $M$. Let $B_1\in \mathcal{B}$ be a fundamental block.  
 Let $S_1$ and $S_1'$ be the top and bottom surfaces of $B_1$. By definition, the pants decompositions on $S_1$ and $S_1'$ differ
 by a pants move, and each of them defines a P-graph. The two P-graphs differ by a P-graph move.
 By the discussion of local behaviors of P-complex above, $B_1$ is the preimage of a neighbourhood $U_1$ of a type-III point, so that
 the upper boundary of $U_1$ is one P-graph and the lower boundary of $U_1$ is the other P-graph. If $\mathcal{B}$ contains only one block,
 then $U_1$ is the P-complex. If $B_1$ is glued to itself in $M$ along $S_1$ and $S_1'$, then glue $U_1$ to itself along the upper and lower
 boundaries correspondently.
 Assume $\mathcal{B}$ contains more than one block. Let $B_2\in \mathcal{B}$ be a fundamental block that is adjacent to $B_1$ in the pants-block decomposition.
 Let $S_2$ and $S_2'$ be the top and bottom surfaces of $B_2$. Similarly, $B_2$ is the preimage of a neighbourhood $U_2$ of another
 type-III point. Since $B_2$ is adjacent to $B_1$, without loss of generality, we say
 $S_1'\cap S_2$ is either one or two pairs of pants. Let $W$ be the result of gluing $U_1$ to $U_2$ along their boundaries corresponding
 to $S_1'\cap S_2$, denoted by $W=U_1\cup U_2$. $W$ is a P-complex since both $U_1$ and $U_2$ are P-complexes.
 Continue this process, letting $U_i$ be the neighbourhood of a type-III point that
 is uniquely determined by a fundamental block $B_i\in \mathcal{B}$, we have 
 $$U=\displaystyle\bigcup_{B_i\in \mathcal{B}} U_i.$$ Thus the P-complex $U$ is uniquely determined by the pants-block decomposition $(L,\mathcal{P}_L,
 \mathcal{B})$.
\end{proof}

\begin{Lem}\label{lempcomplex3}
 Every P-complex of a 3-manifold $M$  defines a unique pants-block decomposition for $M$.
\end{Lem}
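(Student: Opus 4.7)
The plan is to invert the construction in Lemma \ref{lem2}, extracting $(L,\mathcal{P}_L,\mathcal{B})$ stratum-by-stratum from the type-I, type-II and type-III data of the P-complex $\mathcal{PC}$ with quotient map $F:M\to\mathcal{PC}$.

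First I extract the pants blocks. For each vertex $v$ of $\mathcal{PC}$ (a type-III point) I choose a neighbourhood $U_v$ supplied by the triple $(G,V_v,U_v)$ of Definition \ref{defpcomplex}, so that $V_v:=F^{-1}(U_v)$ is a fundamental $(1,1)$- or $(0,4)$-block depending on whether $v$ is a III-S or a III-A point. Since $M$ is compact, $\mathcal{PC}$ has only finitely many vertices, and I may arrange the $U_v$ to cover $\mathcal{PC}$ with pairwise disjoint interiors, yielding a finite collection $\mathcal{B}=\{V_v\}$ of embedded pants blocks with disjoint interiors whose union is $M$. Next I extract the pants. Each edge $e$ of $\mathcal{PC}$ runs between two (not necessarily distinct) vertex neighbourhoods. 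The type-II triple identifies a tubular neighbourhood of the interior of $e$ with a valence-three graph cross interval, and identifies its preimage in $M$ with pair-of-pants cross interval. Choose a transverse slice through the interval direction to obtain a single pair of pants $P_e$ separating the blocks on either side of $e$, and set $\mathcal{P}_L:=\{P_e\}$. Finally let $L$ be the union of the boundary loops of the $P_e$. Each such loop lies in the preimage of a $2$-cell of $\mathcal{PC}$, which by the type-I triple fibres as an $S^1$-bundle, so distinct such loops are either disjoint or isotopic through the $S^1$-fibres; after a small isotopy within these fibres, $L$ becomes an embedded link.

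It remains to check the axioms of a pants-block decomposition and the uniqueness assertion, both of which follow from the local triples. The interiors of the $P_e$ are pairwise disjoint and embedded by construction, their boundaries lie on $L$, and each block in $\mathcal{B}$ is bounded by the $P_e$ associated to the edges of $\mathcal{PC}$ incident to its vertex, so that $(L,\mathcal{P}_L,\mathcal{B})$ assembles to all of $M$. Uniqueness up to ambient isotopy then follows because the combinatorics of $\mathcal{PC}$ — its vertices, edges, and $2$-cells with their incidences — determine the isotopy class of each block and each pair of pants, independently of the auxiliary choices of $U_v$ and of slice. The main obstacle is the matching condition at shared boundaries: the pair of pants at the top of one block $V_v$ and the pair of pants at the bottom of an adjacent block $V_{v'}$ along the same edge $e$ must agree as subsets of $M$, not merely be isotopic. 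Both sit inside the pair-of-pants cross interval above $e$, so the product structure there lets me isotope the block boundaries into coincidence with $P_e$, simultaneously settling the gluing and, applied to different auxiliary choices, the uniqueness statement.
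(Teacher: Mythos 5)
Your construction is essentially the paper's own, read off from the same local data: type-I preimages give the link, Y-shaped slices over the valence-three edges give the pants, and the type-III local models give the blocks, with uniqueness coming from the fact that different auxiliary choices within a $2$-cell or edge neighbourhood differ by isotopy (the paper phrases this as two type-I preimages in the same $2$-cell cobounding an annulus). The only difference is order of extraction -- you build blocks first and must then match block boundaries to the slices $P_e$, whereas the paper picks one point per $2$-cell first, forms the pants as preimages of Y-shapes through those points, and takes the blocks as the complementary pieces, which makes the gluing automatic; this is a presentational difference, not a different argument.
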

\begin{proof}
 Let $\mathcal{PC}$ be a P-complex of $M$. Choose an interior point from each 2-cell in $\mathcal{PC}$, and denote this collection by $V$.
 All points in $V$ are type-I points in $\mathcal{PC}$ as classified above. The preimages in $M$ of $v\in V$ are loops which form a link
 $L$. Consider an index-three edge $E$ in $\mathcal{PC}$. Let $U_1$, $U_2$ and $U_3$ be the three 2-cells in $\mathcal{PC}$ adjacent to $E$.
 Let $v_1,v_2,v_3\in V$ such that $v_i\in U_i$. Let $e\in E$ be a type-II point. Connect $v_i$ to $e$ by an arc in $U_i$. This forms a 
 Y-shape. Note that the preimage in $M$ of $e$ is a $\theta$-graph, so the  preimage of this Y-shape is a  pair of pants such that 
 its boundary components are in $L$. As we continue this process, we obtain a collection $\mathcal{P}_L$ of pairs of pants that cuts the manifold
 $M$ into  a collection $\mathcal{B}$ of pants blocks. This is true because of the discussion of type-III points. Thus $(L,\mathcal{P}_L,\mathcal{B})$
 is a pants-block decomposition of $M$.  As mentioned above,  the
 preimages of two type-I points in the same 2-cell of $\mathcal{PC}$ cobound an annulus in $M$, thus the choice of the interior point
 in $V$ for this 2-cell is unique up to homotopy. Therefore the choices of the link and pairs of pants are unique up to homotopy.
\end{proof}

In the statement of the main theorem, we require the manifold to be hyperbolic. This is because we need our P-complex
to be nice enough, i.e., every 2-cell in a P-complex is a disk, see Lemma \ref{lemdisk}. By this lemma, we can 
guarantee that there is no valence-two edges in the P-complex since the 2-cells like annuli or even higher genus 
doens't exist. See Figure \ref{v2edge} for an illustration. Before proving Lemma \ref{lemdisk}, we need the following discussions.

 \begin{figure}[ht]
\centering
\includegraphics[width=.3\textwidth,height=.15 \textwidth]{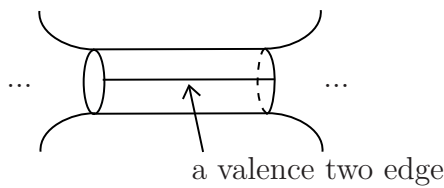}
\put(-50,-10){a valence two edge}
\put(-120, 25){...}
\put(0, 25){...}
\caption{A valence-two edge in an annulus. Both sides of the annulus are not disks.
But this cannot happen in the P-complexes.}
\label{v2edge}
\end{figure}

\begin{Lem}\label{lemcomsep}
 Let $M$ be a compact, closed, hyperbolic 3-manifold. Then every embedded torus in $M$ is both compressible and separating.
\end{Lem}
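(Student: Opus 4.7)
My plan is to derive both conclusions from two standard properties of a closed hyperbolic $3$-manifold $M$: (i) $\pi_1(M)$ contains no subgroup isomorphic to $\mathbb{Z}^2$, and (ii) $M$ is irreducible. Compressibility will follow from (i) via the loop theorem, and the separating property will follow from (ii) via a homological/surgery argument.

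For compressibility, I would first record that since $M$ admits a hyperbolic structure, $\pi_1(M)$ acts freely and cocompactly on $\mathbb{H}^3$; in particular every non-trivial abelian subgroup of $\pi_1(M)$ is infinite cyclic, so $\pi_1(M)$ has no $\mathbb{Z}^2$ subgroup. Now let $T \subset M$ be an embedded torus. If $T$ were incompressible, then by the loop theorem the inclusion would induce an injection $\pi_1(T) = \mathbb{Z}^2 \hookrightarrow \pi_1(M)$, contradicting the previous sentence. Hence $T$ admits a compressing disk $D$.

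For the separating property, I would use that the universal cover of $M$ is $\mathbb{H}^3 \cong \mathbb{R}^3$, which has trivial $\pi_2$, so $\pi_2(M) = 0$ and (by the sphere theorem, or by lifting embedded spheres to $\mathbb{R}^3$ and a covering-space argument) $M$ is irreducible. Given the compressing disk $D$ from the previous paragraph, let $T'$ be the surface obtained by compressing $T$ along $D$; since $\partial D$ is essential and hence non-separating on $T$, the surface $T'$ is a $2$-sphere. By irreducibility $T'$ bounds a $3$-ball in $M$, so $[T'] = 0$ in $H_2(M;\mathbb{Z}/2)$. On the other hand, a regular neighborhood $N(T \cup D)$ is a compact cobordism in $M$ whose boundary is $T \sqcup T'$ (up to isotopy), which exhibits $[T] = [T']$ in $H_2(M;\mathbb{Z}/2)$. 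Therefore $[T] = 0$, which is equivalent to $T$ being separating in $M$.

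The only point that requires a small amount of care is the cobordism step: I would verify that $N(T \cup D)$ indeed has boundary $T \sqcup T'$ by an Euler-characteristic count ($\chi(T \cup D) = 1$, so $\chi(\partial N(T \cup D)) = 2$, matching a torus together with a sphere) and by inspecting the two sides of $T \cup D$ locally. Apart from this bookkeeping, the argument is immediate from the two cited properties of closed hyperbolic $3$-manifolds, and I do not anticipate any serious obstacle.
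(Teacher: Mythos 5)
Your proof is correct, and its overall skeleton matches the paper's: get a compressing disk from hyperbolicity, surger the torus along it to a sphere, and use irreducibility. The differences are in which standard facts carry the two halves. For compressibility the paper quotes atoroidality directly (a closed hyperbolic manifold has no incompressible tori because any such torus would have to be boundary-parallel and there is no boundary), whereas you derive it algebraically from the absence of $\mathbb{Z}^2$ subgroups in $\pi_1(M)$ together with the loop theorem; the two are equivalent here, and your route has the minor cost of invoking the loop theorem and Preissmann-type group theory but the benefit of not needing the geometric form of the atoroidality statement. For the separating claim the paper argues by contradiction and simply asserts that compressing a non-separating torus yields a non-separating sphere, while you prove the needed step explicitly: the regular neighborhood $N(T\cup D)$ is a cobordism with boundary $T\sqcup T'$, so $[T]=[T']$ in $H_2(M;\mathbb{Z}/2)$, and irreducibility forces $[T']=0$, hence $T$ separates. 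That homological bookkeeping is exactly the justification the paper leaves implicit, so your write-up is, if anything, the more complete of the two.
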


\begin{proof}
 Every hyperbolic 3-manifold is irreducible and atoroidal. A compact irreducible 3-manifold is atoroidal means 
 every incompressible torus in $M$ is parallel to a component of $\partial M$. The assumption $M$ is closed means 
 $\partial M=\emptyset$, thus there is no incompressible torus in $M$. So every embedded torus in $M$ is compressible. 
 
 Suppose we have a non-separating, compressible torus in $M$. Compress this torus along a meridian curve, then we get a
 nonseparating sphere $S_0$ in $M$. This implies $S_0$ doesn't bound a ball in $M$ from both sides. But $M$ is irreducible
 implies every sphere in $M$ bounds a ball. A contradiction. Thus every compressible torus in $M$ is separating. 
\end{proof}

 A compressible separating torus $T$ in a compact, closed, hyperbolic 3-manifold bounds either a solid torus
 or a knot complement on one side. The case that $T$ bounds a solid torus on one side can be understood 
 straight forward. Suppose $T$ bounds a knot complement on one side. The construction of the knot complement and
 this $T$ is as follows. 
 Let $S$ be an embedded separating sphere in $M$ which bounds a ball $B$ on one side. Let $\alpha$ be 
 a knotted arc embedded in $B$ and $N(\alpha)$ is a tubular neighbourhood of $\alpha$ embedded in $B$. Let 
 $M_1=B\backslash N(\alpha)$ and $M_2=(M\backslash B)\cup N(\alpha)$. Then $\partial M_1=\partial M_2=T$ and $M_1$ is 
 the knot complement. Remove $M_1$, unknotted the arc $\alpha$ and its neighbourhood $N(\alpha)$ and denote the resulting
 manifold by $M_2'$. $M_2'$ is homeomorphic to $M_2$.
 Glue a solid torus back to $M$ in a trivial way, then we get a manifold $M'$ which is homeomorphic to $M$. 
 This implies $M_1$ is homeomorphic to a solid torus.
 Thus the image of $M_1$ in the P-complex is homeomorphic to the image of a solid torus in the P-complex.

\begin{Lem}\label{lemdisk}
 Every 2-cell in a P-complex of a compact, closed, hyperbolic 3-manifold $M$ is a disk. 
\end{Lem}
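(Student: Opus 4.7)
My plan is to argue by contradiction. Suppose that some 2-cell $F \subset \mathcal{PC}$ is not a disk. Regarding $F$ as an open surface with boundary identifications, I pick an essential simple closed curve $\gamma \subset F$: a core circle when $F$ is an annulus, and more generally a nontrivial class in $H_1(F)$. Because $\gamma$ lies in the interior of $F$, every point of $\gamma$ is a type-I point, and the triple $(f_1, V_1, U_1)$ in Section \ref{sec3} shows its fiber under the quotient map $M \to \mathcal{PC}$ is a single circle. As the basepoint traces out $\gamma \cong S^1$, these circle fibers sweep out an embedded closed surface $T := \mathcal{PC}^{-1}(\gamma) \subset M$ which is a circle bundle over $S^1$; orientability of $M$ rules out the Klein bottle, so $T$ is a 2-torus.

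Next I would apply Lemma \ref{lemcomsep}: the torus $T$ must be both compressible and separating in $M$. The discussion preceding the statement of Lemma \ref{lemdisk}, in which a knot-complement side of a compressible separating torus is replaced by a trivially embedded solid torus without changing $M$ (or the image of that side in the P-complex), lets me assume that one side of $T$ in $M$ is a solid torus $V$. The link component $\ell \subset L$ associated to $F$ via Lemma \ref{lempcomplex3} lies on $T$ as a circle fiber of the quotient, giving $\ell$ a well-defined slope on $\partial V = T$, which is either the meridian of $V$ or not.

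I would then split into two cases. If $\ell$ is the meridian of $V$, then $\ell$ bounds a disk in $V \subset M$; gluing this disk to a pair of pants $P \in \mathcal{P}_L$ adjacent to $\ell$ caps off $\ell$ and exhibits $\ell$ as an inessential curve in the surface of the pants-block decomposition containing $P$, contradicting the definition of a pants-block decomposition (whose pants are bounded by essential curves). If $\ell$ is not the meridian, then the meridian disk $D$ of $V$ is transverse to the circle fibers of the quotient over a neighbourhood of $T$, and a general-position argument using the local triples of Definition \ref{defpcomplex} projects $D$ to an embedded disk in $\mathcal{PC}$ whose boundary is exactly $\gamma$, contradicting the essentiality of $\gamma$ inside $F$. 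Either case yields a contradiction, so $F$ must have been a disk.

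The main obstacle I foresee is the second case: rigorously projecting the meridian disk $D$ down to $\mathcal{PC}$ and verifying that its image is a genuine embedded disk bounded by $\gamma$. One must track how $D$ meets the preimages of type-II edge points and type-III vertices near $\partial F$ and control the resulting 2-chain so that no unwanted singularities appear. The local models of Section \ref{sec3}, combined with transversality, and if necessary the induced moves of Section \ref{singularities} to eliminate stray crossings or cusps produced along the way, should be enough to carry this out; but this case analysis is the technical crux of the argument.
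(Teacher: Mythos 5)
Your first half coincides with the paper's argument: take a simple closed curve in a non-disk 2-cell, note that its preimage is an embedded torus $T$ (the fibers over type-I points are circles), and invoke Lemma \ref{lemcomsep}. The divergence comes in how you extract a contradiction, and there the proposal has a genuine gap. In your Case 1 there is no contradiction to be had: the essentiality demanded of the curves of $L$ is essentiality \emph{in the surfaces} of the decomposition (boundary curves of the pairs of pants must be essential in the base surfaces of the blocks), not in the ambient 3-manifold. A curve that is essential in an embedded or immersed surface can perfectly well bound an embedded disk in $M$ -- curves of a pants decomposition of a Heegaard surface that are meridians of a handlebody do exactly this -- and gluing the meridian disk of $V$ to a pair of pants of $\mathcal{P}_L$ does not make $\ell$ inessential in any surface of the decomposition. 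So the meridian case cannot be dismissed this way.

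Your Case 2, which you yourself identify as the crux, is not only left unproved but aims at the wrong target. The meridian disk $D$ sits inside $V$, whose image under the quotient map is the whole sub-complex on one side of $\gamma$, not a neighbourhood of $\gamma$ in $F$; there is no reason its image should be an embedded disk, and even if $\gamma$ did bound a disk somewhere in $\mathcal{PC}$ outside $F$, that would not contradict the essentiality of $\gamma$ \emph{inside the 2-cell} $F$, which is what a contradiction argument needs. The paper sidesteps both difficulties: after establishing that $T$ is compressible and separating, it cuts $M$ along $T$, uses the discussion preceding the lemma to say that the compressible side is a solid torus or a knot complement, removes that side and reglues a trivial solid torus to recover a manifold homeomorphic to $M$, and observes that on the level of P-complexes this operation replaces the sub-complex $\mathcal{PC}_1$ by a disk attached along $c$, so $c$ bounds a disk -- no meridian/non-meridian dichotomy and no projection of a compressing disk into $\mathcal{PC}$ is required. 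To repair your write-up you would either need to adopt that cut-and-reglue step or find a genuinely different way to rule out both of your cases.
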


\begin{proof}
 Take an arbitrary simple closed curve $c$ in an arbitrary 2-cell of a P-complex $\mathcal{PC}$ of $M$. The preimage of $c$ in $M$ is a torus $T$ since 
 the preimage in $M$ of each point in $c$ is an $S^1$. By Lemma \ref{lemcomsep}, $T$ is compressible and separating. 
 This implies $c$ is a separating curve in the P-complex. Cut $M$ along $T$, we get $M_1$ and $M_2$. $c$ separates
 the P-complex into two sub P-complexes $\mathcal{PC}_1$ and $\mathcal{PC}_2$, which are the P-complexes of $M_1$
 and $M_2$ respectively. By the arguments in the previous paragraph,
 $T$ is compressible implies that it bounds a solid torus or a knot complement from one side. If $M_1$ is a solid torus, 
 remove $M_1$ from $M$ and glue back a trivial solid torus, we get a manifold which is homeomorphic to $M$.
 This corresponds to cut $\mathcal{PC}_1$ from $\mathcal{PC}$ along $c$ and glue a trivial disk to $\mathcal{PC}_2$.
 If $M_1$ is a knot complement as constructed above, remove $M_1$ and glue back a trivial torus, we still get a manifold which is 
 homeomorphic to $M$. This also implies we can remove $\mathcal{PC}_1$ from $\mathcal{PC}$ and glue a disk back. 
 Therefore $c$ bounds a disk in both cases. Hence every 2-cell in a P-complex of $M$ is a disk.
\end{proof}

\subsection{P-complexes and Morse 2-functions}\label{secpm}

In this subsection we explain how to associate a Morse 2-function to a P-complex for a 3-manifold $M$, which we will 
use in the proof of the main theorem. 

\begin{Lem}\label{lempcomplex}
 Let $M$ be a closed, connected, orientable 3-manifold. Every Morse 2-function $F:M\to \mathbb{R}^2$ defines a P-complex.
\end{Lem}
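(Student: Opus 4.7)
The plan is to construct $\mathcal{PC}$ directly as a quotient of the Reeb complex $\mathcal{RC}$ of $F$, in the same spirit as the surface-level passage from a Reeb graph to a P-graph in Section \ref{sec4}. Concretely, let $\pi\colon M\to \mathcal{RC}$ be the Stein factorization of $F$ (Definition \ref{rc}). I would define an equivalence relation $\sim_P$ on $\mathcal{RC}$ whose nontrivial identifications collapse: (i) each connected piece of the image of the definite-fold locus of $F$, together with any 2-cell of $\mathcal{RC}$ whose generic fiber bounds a disk in $M$, to a point; and (ii) each cusp point of $F$ together with the definite branch leaving it. Set $\mathcal{PC}:=\mathcal{RC}/\!\sim_P$ and take the defining map to be the composite $M\xrightarrow{\pi}\mathcal{RC}\xrightarrow{q}\mathcal{PC}$.

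First I would check that, away from the collapsed stratum, the natural local structure of $\mathcal{RC}$ already matches the P-complex local models catalogued in Section \ref{sec3}. For a generic point in a 2-cell the preimage fiber is a single essential circle, giving a type-I triple $U_1\times S^1\to U_1$. Along an indefinite fold (away from crossings and cusps), the preimage of a small neighborhood is a pair-of-pants$\times I$ projecting to a valence-three graph$\times I$, matching the type-II triple of Figure \ref{fig22}. At an entangled crossing the preimage of a neighborhood is a type $(0,4)$ pants block projecting onto the H-I stack of Figure \ref{fig21}(a), giving a type-III-A point; and at a valence-two vertex produced by an hourglass event, the preimage is a type $(1,1)$ pants block, giving a type-III-S point. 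These identifications use essentially the same reparametrization arguments already recorded in Section \ref{secmorse2}.

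Next I would verify that the quotient $q$ only kills topologically trivial pieces, so that the composite $M\to \mathcal{PC}$ still reads off the full 3-manifold. Each connected component of the definite-fold stratum in $\mathcal{RC}$ is the image under $\pi$ of a locus whose transverse fibers bound disks (the canonical form $(u,x,y)\to(u,x^2+y^2)$); collapsing such a component to a point corresponds in $M$ to crushing a $D^2$-bundle to its core, which does not change the homeomorphism type of a neighborhood of the remaining picture. A similar local analysis handles cusps using the canonical form $(u,x,y)\to(u,y^2+ux-x^3/3)$. In this way each vertex of $\mathcal{PC}$ inherits a neighborhood of exactly one of the admissible forms, so Definition \ref{defpcomplex} applies.

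The hard part will be step (ii) above: carefully pinning down the local picture at cusps and at unentangled crossings to confirm that after the collapse no excluded local models (those of Figures \ref{fig2cancell}, \ref{fig17}, \ref{fig18}, \ref{fig19}, \ref{fig20}) persist in $\mathcal{PC}$. I would handle this by a case analysis along the stratification $C(F)=\{\text{indefinite folds}\}\cup\{\text{definite folds}\}\cup\{\text{cusps}\}\cup\{\text{crossings}\}$, showing in each case that $q$ either (a) identifies $\mathcal{RC}$ with $\mathcal{PC}$ locally, producing a type-I, II, or III-A/S model, or (b) collapses the stratum to a single point, absorbing the excluded configuration into a neighboring type-I cell. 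Because the canonical local forms after Definition \ref{morse2} are explicit, this becomes a finite bookkeeping check rather than new geometry.
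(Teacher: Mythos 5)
Your plan diverges from the paper's in a way that creates a genuine gap. The paper does not quotient the Reeb complex of the given $F$: it \emph{homotopes the Morse 2-function} using the induced moves of Section \ref{singularities} (cancelling crossings, cusp--fold crossings, eye death, merge, swallowtail) so that the definite folds, cusps, and the excluded configurations disappear from the Reeb complex itself, and only then reads off the link, the pairs of pants, and the type-I/II/III structure from the simplified complex. Your construction keeps the original map $\pi\colon M\to\mathcal{RC}$ and only collapses cells in the target, taking $q\circ\pi$ as the defining map. That composite fails Definition \ref{defpcomplex}: the point to which you collapse a definite-fold edge together with its adjacent trivial 2-cell has as preimage the entire 3-dimensional region of $M$ lying over that stratum (a solid torus or ball swept out by disk-bounding fibers), and no admissible triple allows a single point of the complex to have a 3-dimensional preimage -- type-I points have circle fibers, type-II points have $\theta$-graph fibers, and even type-III vertices have graph fibers inside a pants block. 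Nearby points inherit the same defect, since any neighbourhood of them in the quotient meets the collapsed point. The ``absorbed into a neighbouring type-I cell'' step in your last paragraph is exactly where the argument breaks: absorbing the region requires re-fibering that part of $M$ by circles parallel to the neighbouring fibers, which is precisely what the induced moves (i.e., a change of the function, not of the target) accomplish and what a target-only quotient cannot.

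A secondary point: even away from the collapsed strata, the Reeb quotient map has figure-eight fibers over indefinite-fold edge points, whereas the type-II model requires $\theta$-graph fibers; so even there one must replace the Reeb-complex map by the P-complex map via the figure-eight/$\theta$-graph correspondence of Section \ref{sec4}, as the paper does implicitly when it says the simplification is ``analogous to the process which turns a Reeb graph into a P-graph.'' If you want to salvage your approach, the fix is to phrase the collapse not as a quotient of $\mathcal{RC}$ but as the effect on the Reeb complex of an explicit generic homotopy of $F$ realizing Moves 1--6, together with the bookkeeping of which 2-cells and index-three edges disappear -- at which point you have essentially reproduced the paper's proof.
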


\begin{proof}
 By Definition \ref{rc}, each Morse 2-function $F:M\to \mathbb{R}^2$ defines a Reeb complex $\mathcal{RC}$. Since $M$ is closed, all 
 index-one edges of $\mathcal{RC}$ don't contain boundary vertices. Therefore all index-one edges contain only extremal vertices. 
 This means they are  
 in one of the local models described in some of the induced moves in Section \ref{singularities}. Let $C$ be the collection of all 
 2-cells in $\mathcal{RC}$. Choose one interior point from each 2-cell in $C$ and denote the collection by $V$. Let $C_1\subset C$
 be a collection of all 2-cells which contain index-one edges in their boundaries. Let $V_1\subset V$ be the collection of points in 2-cells in $C_1$.
 Let $C_2$ be a collection of all 2-cells adjacent to those 2-cells in $C_1$ and let $V_2\subset V$ be the corresponding subset. For 
 each point in $V$, its preimage in $M$ is a loop. All these loops form a link $L$. Use the induced moves in Section \ref{singularities}
 to simplify $\mathcal{RC}$ such that all index-one edges and their adjacent 2-cells (in $C_1$) are gone. 
 These moves delete the singularities descibed in Section \ref{singularities}, and also delete those 
 index-three edges which cobound these deleted 2-cells together with index-one edges, analogous to the process descibed in Section 
 \ref{morsereeb}  which turns a Reeb graph into a P-graph. These moves change $V$ and $L$ by the following steps: (1) First take out all
 points in $V_1$ from $V$, and correspondingly take out the preimages from $L$ of points in $V_1$. Denote the resulting sets by 
 $V'=V\backslash V_1$ and $L'$. (2) For two 2-cells in $C_2$ which adjacent to the same 2-cell in $C_1$ and the same index-three edge (deleted
 by the moves), say $v_a\in V_2$ and $v_b\in V_2$ are two corresponding points. Take one of them, say $v_b$, out from $V'$ and keep the other. 
 Delete the preimage in $L'$ of $v_b$. Do these to all pairs of adjacent 2-cells in $C_2$ and denote the resulting sets by $V''$ and $L''$.
 These induced moves turn $\mathcal{RC}$ into a 2-dimensional complex $X$. For any interior point in $V''$ of $X$, its preimage
 is a loop in $L''$, thus it is a type-I point. For any edge point $e$ in $X$, note that there is no index-one edge in $X$, thus
 $e$ must be a valence-three point, its preimage in $M$ is a $\theta$-graph. Therefore $e$ is a type-II point. For any vertex point $v\in X$,
 note that the induced moves erased those singularities descibed in Section \ref{singularities}, so the left vertex points are just
 type-III points. Therefore, $X$ is a P-complex.
\end{proof}

\begin{Coro}\label{coro2}
 Let $M$ be a closed, connected, orientable 3-manifold. Every Morse 2-function defines a unique pants-block decomposition of $M$.
\end{Coro}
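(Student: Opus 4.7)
The plan is to obtain this corollary essentially as a composition of the two preceding lemmas, with a short argument for uniqueness. First I would invoke Lemma~\ref{lempcomplex} to produce, from a given Morse 2-function $F:M\to\mathbb{R}^2$, an associated P-complex $\mathcal{PC}_F$. Then I would apply Lemma~\ref{lempcomplex3} to this P-complex, which yields a pants-block decomposition $(L,\mathcal{P}_L,\mathcal{B})$ of $M$. Combining these two steps immediately gives existence of a pants-block decomposition associated to $F$.

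The more delicate part is uniqueness, since the construction in the proof of Lemma~\ref{lempcomplex} involves several choices: a choice of interior point in each 2-cell of the intermediate Reeb complex, and a choice of which of two adjacent 2-cells in $C_2$ to keep when an index-one edge and an index-three edge are eliminated. I would argue that any two such choices yield P-complexes that differ only by an isotopy of the link $L$ and the pants $\mathcal{P}_L$ inside $M$, since the preimages of two interior points in the same 2-cell cobound an annulus (as in the closing sentences of the proof of Lemma~\ref{lempcomplex3}), and the deletion step merely replaces one representative circle by an isotopic one. Consequently the resulting pants-block decomposition is well defined up to the isotopy equivalence used throughout the paper.

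Concretely, I would write the proof as a single paragraph: let $F:M\to\mathbb{R}^2$ be a Morse 2-function; by Lemma~\ref{lempcomplex} it determines a P-complex $\mathcal{PC}_F$, and by Lemma~\ref{lempcomplex3} the P-complex $\mathcal{PC}_F$ determines a unique pants-block decomposition $PB_F=(L,\mathcal{P}_L,\mathcal{B})$ of $M$. Since the only freedom in the construction of $\mathcal{PC}_F$ is the interior-point choice in each 2-cell and the choice between paired adjacent 2-cells in $C_2$, and both of these only alter $L$ and $\mathcal{P}_L$ by isotopy, the pants-block decomposition $PB_F$ is independent of these choices.

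The main obstacle I anticipate is precisely this uniqueness check: one must verify that the two kinds of arbitrary choices in Lemma~\ref{lempcomplex} really do produce isotopic decompositions. This should follow from the local models of type-I, type-II, and type-III points (Definition~\ref{defpcomplex}) together with the annulus argument at the end of Lemma~\ref{lempcomplex3}, but it is the place where one has to be careful, especially to confirm that swapping $v_a$ and $v_b$ in the pair in $C_2$ leads to an isotopic link rather than to a genuinely different one.
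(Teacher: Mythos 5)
Your proposal matches the paper's own proof, which is exactly the composition you describe: Lemma \ref{lempcomplex} produces a P-complex from the Morse 2-function, and Lemma \ref{lempcomplex3} then yields the unique pants-block decomposition. Your additional care about the choices made in the construction (interior points in 2-cells, the paired 2-cells in $C_2$) is a reasonable refinement that the paper itself leaves implicit, but the underlying argument is the same.
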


\begin{proof}
By Lemma \ref{lempcomplex}, a Morse 2-function defines a P-complex. By Lemma \ref{lempcomplex3}, every P-complex defines a unique
pants-block decomposition, thus this corollary is true.
\end{proof}

Given a Morse 2-function $F$, Lemma \ref{lempcomplex} shows that we can construct a P-complex from $F$.
However, this correspondence is not one-to-one. That is, we may have the same P-complex from different Morse 2-functions. For example,
two Morse 2-functions $F_0$ and $F_1$ that are related by a generic homotopy such that their Reeb complexes $\mathcal{RC}_0$ and $\mathcal{RC}_1$
differ by a single induced move 4.

Conversely, given a P-complex $\mathcal{PC}$, we want to know how to obtain a Morse 2-function related to this P-complex. 
We have the following lemma:
\begin{Lem}\label{lempcomplex2}
 Let $M$ be a closed, connected, orientable 3-manifold. For every P-complex $\mathcal{PC}$ of $M$, we can construct a Morse 
 2-function that induces this P-complex.
\end{Lem}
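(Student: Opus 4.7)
The plan is to reverse the construction of Lemma \ref{lempcomplex} by going through the intermediate pants-block decomposition. First I would apply Lemma \ref{lempcomplex3} to extract from $\mathcal{PC}$ its associated pants-block decomposition $(L, \mathcal{P}_L, \mathcal{B})$ of $M$: each fundamental block $B \in \mathcal{B}$ corresponds to exactly one type-III vertex of $\mathcal{PC}$ and has the form $S' \times I$ with $S' \in \{S_{1,1}, S_{0,4}\}$; each pair of pants $P \in \mathcal{P}_L$ corresponds to an index-three edge (type-II stratum) of $\mathcal{PC}$; and each component of $L$ corresponds to a 2-cell of $\mathcal{PC}$. The goal is then to produce a Morse 2-function on each $B$ whose Cerf graphic realizes the local P-model structure at the corresponding vertex, and to glue these local maps together along pairs of pants.

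Next, for each pair of pants $P \in \mathcal{P}_L$ I would fix a Morse function $h_P: P \to \mathbb{R}$ whose Reeb graph is a $\theta$-graph, chosen consistently so that any two blocks meeting along $P$ receive the same $h_P$. These $h_P$'s can be stitched into Morse functions on the top and bottom surfaces of each $B \in \mathcal{B}$ whose Reeb graphs are the corresponding P-graphs, using the pants-decomposition-to-Morse-function procedure outlined in Section \ref{morsereeb}. For each fundamental block $B = S' \times I$ the top and bottom pants decompositions differ by a single pants move, so the discussion preceding Lemma \ref{rmkcross} supplies a generic homotopy $\{g_t^B\}_{t \in I}$ extending the prescribed boundary Morse functions and whose Cerf graphic contains a single crossing realizing that pants move (an S-move at an III-S vertex, an A-move at an III-A vertex). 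Setting $F|_B(t,p) = (t, g_t^B(p))$ then defines a Morse 2-function on $B$ via Definition \ref{morse2}.

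Finally, I would glue the local maps $F|_B$ along shared pairs of pants in $\mathcal{P}_L$ to obtain a global smooth map $F: M \to \mathbb{R}^2$. To verify that $F$ induces the original $\mathcal{PC}$, one retraces Lemma \ref{lempcomplex}: interior points of 2-cells of $\mathcal{PC}$ become type-I points of $F$, the edges of $\mathcal{PC}$ arise from the indefinite fold arcs of the various $F|_B$, and the type-III vertices come from the single crossings in the Cerf graphics of each $F|_B$. No extremal vertices, trivial-saddle vertices, or cusps appear, since the $g_t^B$ were chosen to exhibit only crossings, so the induced moves of Section \ref{singularities} are vacuous and the P-complex associated to $F$ is literally $\mathcal{PC}$.

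The hard part will be the gluing step: making the generic homotopies on adjacent blocks match not just setwise but smoothly across their shared pairs of pants, so that $F$ is genuinely smooth rather than merely piecewise smooth. I would handle this by arranging each $g_t^B$ to be $t$-independent in a collar neighborhood $P \times (-\epsilon, \epsilon)$ of every boundary pair of pants of $B$; both adjoining blocks would then restrict there to the same product map $(t,p) \mapsto (t, h_P(p))$, and a standard bump-function smoothing will yield the desired global Morse 2-function.
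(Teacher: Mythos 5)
Your block-by-block strategy has a genuine gap at its final claim. You assert that the map $F$ you build has ``no extremal vertices, trivial-saddle vertices, or cusps,'' so that the induced moves are vacuous and the Stein factorization of $F$ is literally $\mathcal{PC}$. This cannot happen: for a \emph{closed} 3-manifold the image $F(M)\subset \mathbb{R}^2$ is compact, and along the boundary of the image the fibers pass from nonempty to empty, which forces definite fold arcs (and, generically, cusps where they meet indefinite folds). Hence the Reeb complex of \emph{any} Morse 2-function on $M$ necessarily contains index-one edges, and it can never coincide with a P-complex on the nose — note also that the fiber over an interior point of an index-three edge of a Reeb complex is a figure-eight, while for a P-complex it is a $\theta$-graph (for the same reason, your auxiliary claim that one can choose a Morse function on a pair of pants ``whose Reeb graph is a $\theta$-graph'' is not correct; the P-graph quotient is not induced by any Morse function). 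So ``induces $\mathcal{PC}$'' can only mean: the Reeb complex becomes $\mathcal{PC}$ after the simplification of Lemma \ref{lempcomplex}. The paper's proof confronts exactly this point: it maps $\mathcal{PC}$ to $\mathbb{R}^2$, identifies the ``virtual'' edges and vertices where the planar image folds over without corresponding edges of $\mathcal{PC}$, glues disks along their preimages to form an augmented complex $\mathcal{PC}'$ realizable as a Reeb complex (this is where the unavoidable definite folds and cusps live), and only then removes them with the induced moves. Your proposal omits this entire layer, which is the actual content of the lemma.

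A second, smaller but still real, gap is the gluing step. Arranging each $g_t^B$ to be $t$-independent in a collar of a shared pair of pants handles smoothness in the block's own $(t, g_t)$ coordinates, but it does not address how the rectangles $I\times I$ of the various blocks are to be placed in $\mathbb{R}^2$: the $t$-directions of adjacent blocks need not align, a P-complex does not embed in the plane, and near the link $L$ (where the vertical boundary annuli of several blocks are collapsed to common circles) the product structure degenerates, so the map there is not determined by the collar condition. These global placement and degeneration issues are precisely what the paper's construction of the map $g:\mathcal{PC}\to\mathbb{R}^2$ is designed to organize before any Morse 2-function is produced.
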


\begin{proof}
 We  first need to define a map $g:\mathcal{PC}\to \mathbb{R}^2$. For any vertex $V\in\mathcal{PC}$, i.e., the intersection of
  edges, $g(V)$ is a vertex in $\mathbb{R}^2$. For any edge $E\in \mathcal{PC}$ connecting two vertices $V_1$ and $V_2$,
 $g(E)$ is a non self-intersected edge in $\mathbb{R}^2$ connecting $g(V_1)$ and $g(V_2)$. However, the preimage under $g$ of an edge
 in $\mathbb{R}^2$ is not necessary to be an edge in $\mathcal{PC}$:  (1)Choose an interior point $V'$ in an edge in $\mathbb{R}^2$, its preimage
 has two possibilities: (1a) If $U_{V_1'}$ is a disk  neighbourhood of $V_1'$, as in the second row of Figure \ref{diskimage}($a$), then the 
 preimage of $U_{V_1'}$ under $g$ is a neighbourhood of an interior point $V_1$ in a valence-three edge $E_1$ of $\mathcal{PC}$,  as in the first row.
 (1b) If $U_{V_2'}$ is a half disk ``neighbourhood'' of $V_2'$ with an edge $E_2'$ containing $V_2'$ , as in the second row of Figure \ref{diskimage}(b), 
 then the preimage  of $V_2'$ could be an interior point $V_2$ of a 2-cell with a disk neighbourhood $U_{V_2}$, such that there is an arc $\alpha$ 
 in $U_{V_2}$ whose image $g(\alpha)$ is homeomorphic to $E_2'$. (2) For a vertex $V_3'\in \mathbb{R}^2$ which is the common end of 
 two edges $E_3'$ and $E_4'$, like a cusp point in Figure \ref{diskimage}(c), take a neighbourhood $U_{V_3'}$, the preimage in $\mathcal{PC}$
 of $U_{V_3'}$ is a neighbourhood of $V_3$,  with two arcs $\beta$ and $\gamma$ whose images $g(\beta)$ and $g(\gamma)$ homeomorphic
 to $E_3'$ and $E_4'$.
 Note that Lemma \ref{lemdisk} shows that 
 every 2-cell in a $\mathcal{PC}$ is  a disk. For any interior point $P$ in a 2-cell in $\mathcal{PC}$, there is a disk neighbourhood
 $U$ of $P$ such that $g(U)$ is a disk, but sometimes the image $g(P)$ could be on the boudary of $g(U)$, like Figure \ref{diskimage}(b).
 For any point $p\in\mathbb{R}^2$, there is a disk neighbourhood of $p$ whose preimage is either a disk in $\mathcal{PC}$, or
 one of the three figures in Figure \ref{diskimage}, up to a homeomorphism.
 \begin{figure}[ht]
\centering
\includegraphics[width=.6\textwidth,height=.2\textwidth]{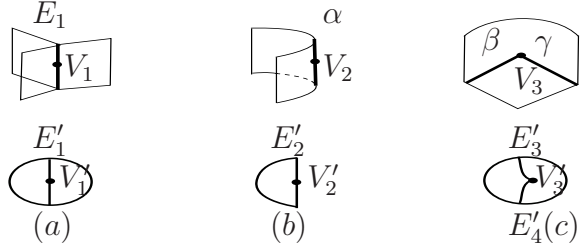}
\put(-207,-10){$(a)$}
\put(-117,-10){$(b)$}
\put(-14,-10){$(c)$}
\put(-207,23){$E_1'$}
\put(-207,70){$E_1$}
\put(-117,23){$E_2'$}
\put(-27,23){$E_3'$}
\put(-27,-10){$E_4'$}
\put(-97,70){$\alpha$}
\put(-37,60){$\beta$}
\put(-17,60){$\gamma$}
\put(-197,8){$V_1'$}
\put(-103,8){$V_2'$}
\put(-17,8){$V_3'$}
\put(-195,50){$V_1$}
\put(-97,50){$V_2$}
\put(-25,45){$V_3$}
\caption{Preimages of disks in $\mathbb{R}^2$ under $g$.}
\label{diskimage}
\end{figure}

We call those edges in Figure \ref{diskimage}(b) and (c) ``virtual'' edges, for the reason that they are not the images under $g$ of edges
in $\mathcal{PC}$. Let $\mathcal{E}$ denote the set of all ``virtual'' edges, and $\mathcal{PE}$ denote the set of preimages
under $g$ of elements in $\mathcal{E}$. A vertex in $\mathbb{R}^2$, the intersection of two edges, is called a ``virtual'' vertex, if at
least one of these two edges is ``virtual''. The intersections of two edges have two possibilities: one is shown in Figure \ref{diskimage}(c),
the  other case is the crossing of two edges. Let $\mathcal{V}$ denote the set of all ``virtual'' vertices, and $\mathcal{PV}$ denote 
the set of preimages under $g$ of elements in $\mathcal{V}$. 

 \begin{figure}[ht]
\centering
\includegraphics[width=.5\textwidth,height=.3\textwidth]{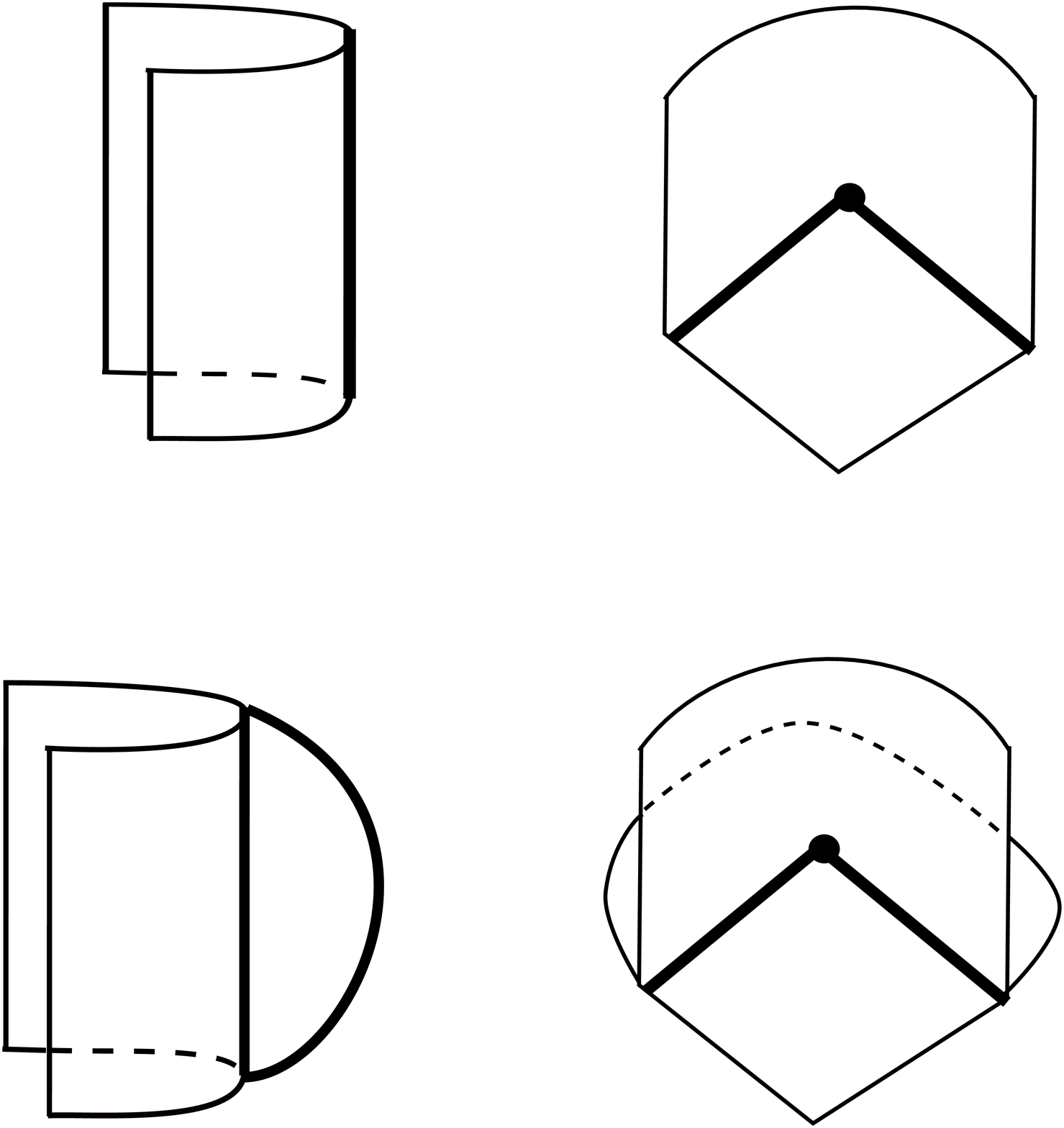}
\put(-117,83){$E_2'$}
\put(-67,85){$E_3'$}
\put(-27,85){$E_4'$}
\put(-137,23){$\tau_1$}
\put(-115,23){$\tau_2$}
\put(-67,23){$\tau_3$}
\put(-27,23){$\tau_4$}
\put(-5,23){$\tau_5$}
\caption{Local models for the construction.}
\label{diskimage2}
\end{figure}

We need to construct a new complex based on $\mathcal{PC}$ and $g$. We will use the same notations as in Figure \ref{diskimage}. Let $E_2'$ be an
element in $\mathcal{E}$ as in (b) and $\alpha$ be the preimage under $g$ of $E_2'$ in $\mathcal{PC}$. Let $D$ be a disk  bounded by two 
arcs $\tau_1$ and $\tau_2$. $\tau_1$ and $\tau_2$ intersect at two endpoints. Glue $\tau_1$ to $\alpha$ with endpoints identified, as in 
Figure \ref{diskimage2}. Do this to all elements in $\mathcal{E}$ whose preimages are $\alpha$. 
Let $E_3'$ and $E_4'$ be elements in $\mathcal{E}$ as in (c), $\beta$ and $\gamma$ be the preimages under $g$ of $E_3'$ and $E_4'$, respectively.
Let $D'$ be  a disk bounded by three arcs $\tau_3$, $\tau_4$ and $\tau_5$. Glue $\tau_3$ to $\beta$, $\tau_4$ to $\gamma$ with endpoints
identified, as in Figure \ref{diskimage2}.
Do this to all elements in $\mathcal{E}$ whose preimages are $\beta$ and $\gamma$. Denote the resulting complex by $\mathcal{PC}'$.

Let $F_*:M\to \mathbb{R}^2$ be a Morse 2-function such that its induced Reeb complex $\mathcal{RC}_*$ is homeomorphic to $\mathcal{PC}'$. Thus
the images of critical points under $F_*$ are homeomorphic to the images in $\mathbb{R}^2$ under $g$. We can use the induced moves to turn 
$\mathcal{RC}_*$ into $\mathcal{PC}$ since the actions of gluing disks described above are contained in the induced moves. Therefore 
$F_*$ is a Morse 2-function that induces $\mathcal{PC}$.

\end{proof}

Given a Morse 2-function $F$, we can construct a P-complex $\mathcal{PC}$ by Lemma \ref{lempcomplex}. 
We can also find a Morse 2-function $F_*$ which induces $\mathcal{PC}$ by Lemma \ref{lempcomplex2}. But $F$ and $F_*$ may not be the same, 
see the discussion after Corollary \ref{coro2}.

\subsection{P-moves between P-complexes}\label{secpmove}
Corollary \ref{coro1} gives the existence of pants-block decompositions for compact, closed, connected, orientable 3-manifolds, 
and Lemma \ref{lem2}
tells us every pants-block decomposition defines a unique P-complex, therefore the path moves between paths in pants complexes
induce P-moves between P-complexes.

By Definition \ref{hls2} and the discussion in Section \ref{sec1}, we can use HLS relations to describe path moves between
paths in the pants complexes, therefore HLS relations induce P-moves between pants-block decompositions for a 3-manifold, and thus 
induce P-moves  between P-complexes.
The goal of this subsection is to give an example of P-moves between pants-block decompositions and the corresponding P-moves between P-complexes
in detail.

\begin{figure}[ht]
\centering
\includegraphics[width=.7\textwidth,height=.3\textwidth]{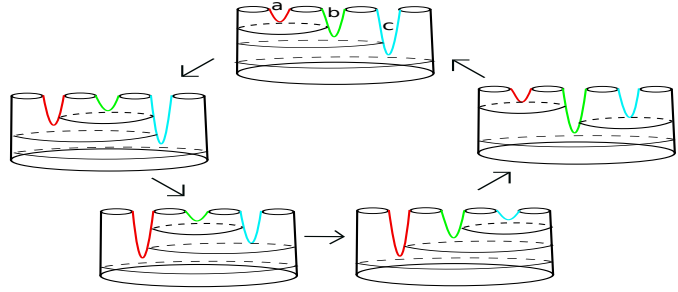}

\caption{A redrawn picture of Figure \ref{fig7}.}
\label{fig27}
\end{figure}

The A-pentagon relation induces two types of path moves, $a\leftrightarrow a^4$ and $a^2\leftrightarrow a^3$. In Section \ref{sec1} we
show that these two path moves are conjugate in the P-move list. Thus we take $a^2\leftrightarrow a^3$ as an example. Figure \ref{fig27}
is a redrawn picture of Figure \ref{fig7} which indicates the heights of different saddles under the height function.

There are two edge paths in the pants complex corresponding to Figure \ref{fig27}. Figure \ref{fig24} is another redrawn picture of 
Figure \ref{fig7} which gives us an intuitive image of pants blocks based on $S_{0,5}$. One edge path in Figure \ref{fig27} contains 3
edges, which corresponds to a collection of three pants blocks on the left of Figure \ref{fig24}. The other edge path in Figure \ref{fig27}
contains two edges, which corresponds to a collection of two pants blocks on the right of Figure \ref{fig24}. Assume that all of the saddle
points in Figure \ref{fig27} are essential saddle points. Figure \ref{r3} shows the Cerf graphics of generic homotopies between Morse 
functions (which are drawn as height functions). Thus the edges in Figure \ref{r3} are indefinite fold edges. Note that each Morse 2-function
is determined by a generic homotopy of Morse functions, thus each Cerf graphic gives us a Morse 2-function which defines a pants-block 
decomposition of the surface-cross-interval in Figure \ref{fig24}.

\begin{figure}[ht]
\centering
\includegraphics[width=.6\textwidth,height=.3\textwidth]{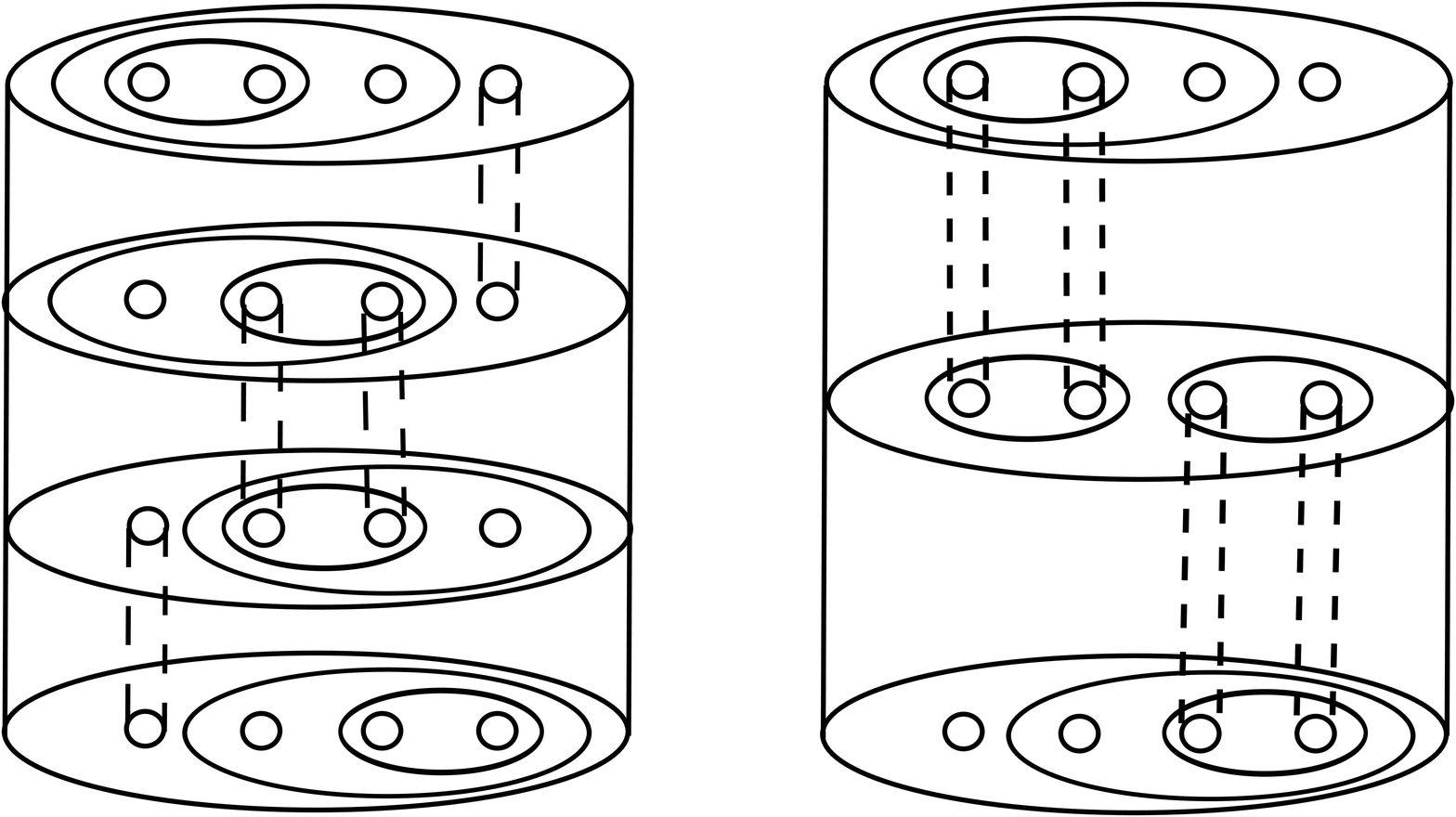}

\caption{P-move: $a^3\leftrightarrow a^2$. The vertical annuli indicate the unchanged boundary components in a pants block based on 
$S_{0,5}$.}
\label{fig24}
\end{figure}

The first crossing between $t=0$ and $t=t_1$ in Figure \ref{r3} implies the heights of saddle points $a$ and $b$ interchange (see Figure
\ref{fig27}, from the top surface to the left). This corresponds to an A-move, which defines the top pants block on the left side of 
Figure \ref{fig24}. Other crossings also define pants blocks respectively except the one in between $t=s_1$ and $t=s_2$. This crossing
doesn't define any pants block because it doesn't define any pants move. The reason that it doesn't  define a pants move is as follows:
At $t=s_1$, the corresponding surface is the right surface on Figure \ref{fig27} with saddle point $a$ higher than saddle point $c$.
At $t=s_2$, the corresponding surface is the same surface but with saddle point $a$ lower than saddle point $c$. This interchange of
heights doesn't define a pants move because the  pants decompositions on the surface are isotopic.
\begin{figure}[ht]
\centering
\includegraphics[width=.8\textwidth,height=.2\textwidth]{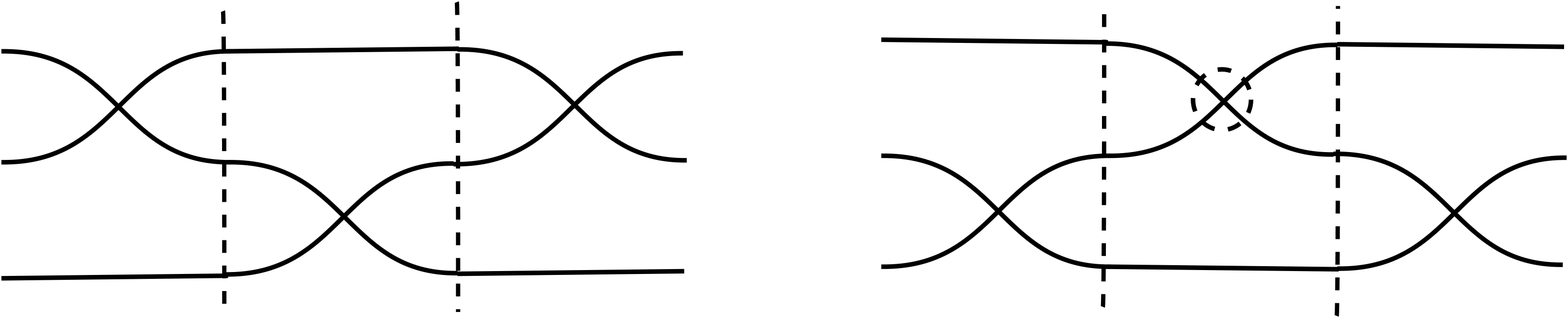}
\put(-300,60){$a$}
\put(-300,30){$b$}
\put(-300,5){$c$}
\put(-160,5){$a$}
\put(-160,30){$b$}
\put(-160,60){$c$}
\put(-135,60){$a$}
\put(-135,30){$b$}
\put(-135,5){$c$}
\put(0,5){$a$}
\put(0,30){$b$}
\put(0,60){$c$}
\put(-305,-10){$t=0$}
\put(-265,-10){$t=t_1$}
\put(-220,-10){$t=t_2$}
\put(-175,-10){$t=1$}
\put(-140,-10){$t=0$}
\put(-95,-10){$t=s_1$}
\put(-50,-10){$t=s_2$}
\put(-5,-10){$t=1$}
\caption{Cerf graphics for the two generic homotopies between Morse functions corresponding to the edge paths in Figure \ref{fig27}.}
\label{r3}
\end{figure}

Note that there is a generic homotopy between the two Morse 2-functions given by the two Cerf graphics, and there is a Reidemeister-III
type singularity in this generic homotopy. This generic homotopy corresponds to the P-move $a^3\leftrightarrow a^2$ which replaces three pants blocks by two pants blocks
in Figure \ref{fig24}.

\begin{figure}[ht]
\centering
\includegraphics[width=.4\textwidth,height=.4\textwidth]{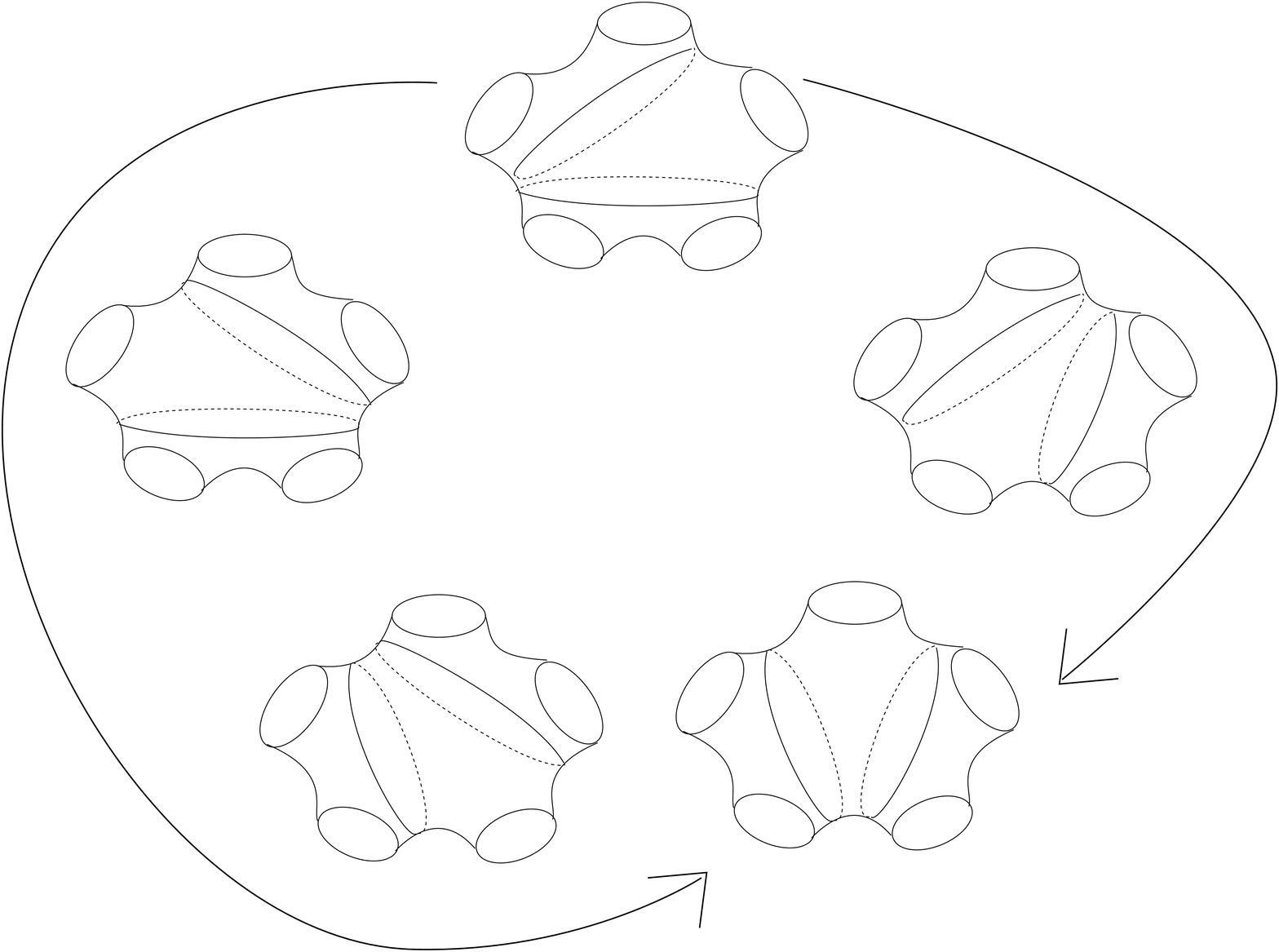}\hspace{.3cm}
\includegraphics[width=.4\textwidth,height=.4\textwidth]{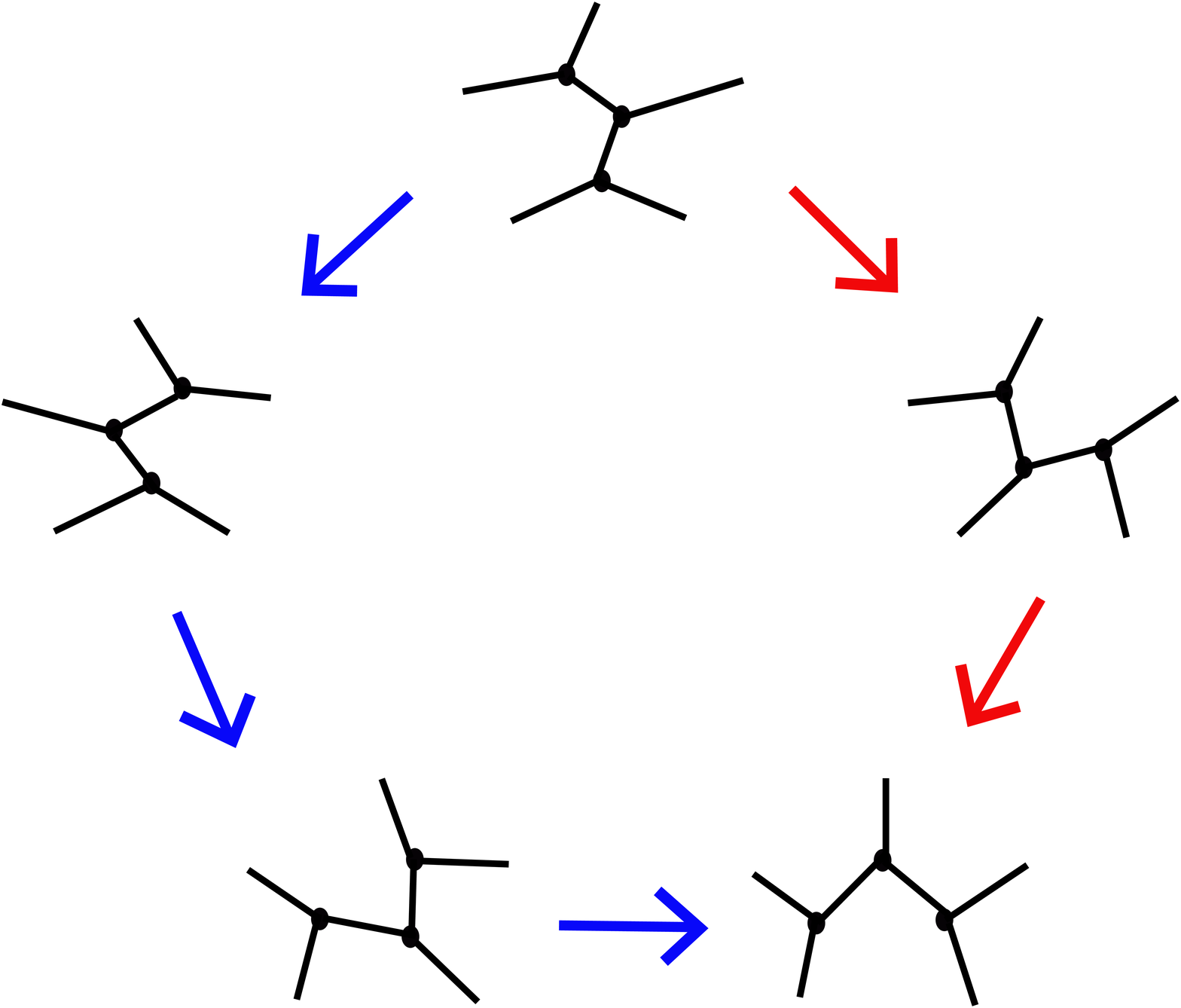}
\caption{The P-graphs of surfaces with pants decompositions in Figure \ref{fig7}.}
\label{fig23}
\end{figure}

As for the  corresponding P-move between P-complexes, we would like to start from the P-graphs. Figure \ref{fig23} shows the corresponding
P-graphs of the surfaces with pants decompositions in Figure \ref{fig7}, and indicates the two paths in Figure \ref{fig27}.
Each pair of adjacent P-graphs in Figure \ref{fig23} differ by an H-I move. This means they are in two regular levels of a local P-model
which are separated by a valence-four vertex. In other words, there is a local model as in Figure \ref{fig21}(a) in between 
each pair of adjacent P-graphs in Figure \ref{fig23}.

There are two different local P-models starting from the top P-graph and ending at the bottom right P-graph in Figure \ref{fig23}.
One corresponds to the left path of Figure \ref{fig23} which contains three valence-four vertices in between the P-graphs. 
The other local P-model corresponds to the right path of Figure \ref{fig23} which contains two valence-four vertices in between P-graphs.
Assume these two local P-models are in two P-complexes $\mathcal{PC}_0$ and $\mathcal{PC}_1$ respectively, such
that the remaining parts in $\mathcal{PC}_0$ and $\mathcal{PC}_1$ are isotopic to each other. We can construct a Morse 2-function $F_i$
which induces $\mathcal{PC}_i$ by Lemma \ref{lempcomplex2}. The Cerf graphics of $F_i$ are in Figure \ref{r3}.
There is a generic homotopy between $F_0$ and $F_1$ such that there is only one Reidemeister-III type singularity. This generic homotopy
realizes a P-move between the P-complexes $\mathcal{PC}_0$ and $\mathcal{PC}_1$.

Above we build a correspondence between a P-move of 2-3 type and a singularity of Reidemeister-III type.
As for the other P-moves, we conclude that the cancelling-pair move corresponds to the singularity of Reidemeister-II type, while the 
rest of P-moves correspond to the singularity of Reidemeister-III type.

\section{Proof of Theorem \ref{MThm}}\label{secproof}

In this section, we want to prove the main theorem. We first need the following lemma.

\begin{Lem}\label{lempmove}
Let $M$ be a closed, orientable 3-manifold. Let $F_0$ and $F_1$ be two Morse 2-functions on $M$ that define two
pants-block decompositions $PB_0$ and $PB_1$ respectively. 
Let $\{F_t\}$ be a generic homotopy between $F_0$ and $F_1$ and assume that there is a single singularity in between 
$F_0$ and $F_1$.  Then $PB_0$ and $PB_1$ either differ by a single P-move or are homotopic 
to each other.
\end{Lem}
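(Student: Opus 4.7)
The plan is to argue case-by-case through the possible singularities of a generic homotopy between Morse 2-functions, and for each type show that the induced change between $PB_0$ and $PB_1$ is either trivial (i.e.\ the two pants-block decompositions are related by an ambient isotopy) or is precisely one of the P-moves listed at the end of Section \ref{sec1}. By Corollary \ref{coro2} each $F_i$ defines a unique pants-block decomposition $PB_i$ via its P-complex $\mathcal{PC}_i$, so it suffices to show that the single singularity of $\{F_t\}$ changes $\mathcal{PC}_0$ into $\mathcal{PC}_1$ by a single P-move or by a homotopy. The singularity must be one of the types listed in Section \ref{sechomo}: a 2-parameter coincidence, a Reidemeister-II fold crossing, a Reidemeister-III fold crossing, a cusp-fold crossing, an eye birth/death, a merge, or a swallowtail birth.

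First I will dispose of the singularities that disappear when the Reeb complex is reduced to a P-complex. By the induced-move analysis of Section \ref{singularities}, the cusp-fold crossing (Move 3), the eye birth/death (Move 4), the merge of a death-birth pair (Move 5), and the swallowtail birth (Move 6) all take place inside pieces of the Reeb complex that are collapsed away when one passes to the P-complex, since they involve definite folds, cusps, or extremal/trivial-saddle type vertices, none of which appear in a P-complex. Concretely, for each of these I will take a small neighborhood of the singularity in $\mathbb{R}^2$, note that the preimage of this neighborhood under $F_t$ lies entirely in what becomes a trivial region of $\mathcal{PC}_t$, and conclude that $\mathcal{PC}_0$ and $\mathcal{PC}_1$ agree outside this region and carry isotopic pants-block data inside it. The 2-parameter coincidence was listed for completeness and, as noted after Definition \ref{def2}, does not interact with the P-complex structure, yielding again only a homotopy.

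Next I will handle the two Reidemeister-type crossings, which are where P-moves actually arise. For a Reidemeister-II fold crossing, two crossings in the Cerf graphic meet and cancel (Move 1 of Section \ref{singularities}); by Lemma \ref{rmkcross} the two crossings correspond to a pants move and its inverse, so the associated change of path in the pants complex is exactly $ss^{-1}\leftrightarrow\emptyset$ or $aa^{-1}\leftrightarrow\emptyset$, which by Definition \ref{defpmove}(1) is a cancelling-pair P-move (insertion/deletion of an invertible pair). For a Reidemeister-III fold crossing, three indefinite folds pass simultaneously through a common point and the Reeb complex changes by Move 2 of Section \ref{singularities}; the corresponding path-move in the pants complex is, depending on which subsurface carries the three saddles, either the $2$-$3$ type A-pentagon move, the $3$-$3$ type mixed-hexagon move, or a triangle move, each of which is one of the P-moves in the final list of Section \ref{sec1}. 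In each sub-case I identify the base subsurface from the topology of the three crossing folds (a $(0,5)$-, $(1,2)$-, $(1,1)$-, or $(0,4)$-piece of the level surface) and invoke the explicit correspondence worked out in Section \ref{secpmove} to match the Reidemeister-III singularity with the appropriate HLS polygon.

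The main obstacle I anticipate is the Reidemeister-III case, where I need to verify that every possible combinatorial configuration of three simultaneously crossing indefinite folds really does arise as one of the HLS polygons and produces exactly one P-move rather than a composition. This requires a careful bookkeeping of which saddles are essential and of the base subsurface determined by the three folds, in the style of the proof of Lemma \ref{rmkcross}; the Euler characteristic bound there, together with Lemma \ref{lemdisk} guaranteeing that each 2-cell in the P-complex is a disk, will be used to rule out spurious configurations and to confirm that the resulting sub-P-complex fits Definition \ref{defpmove}(2). Once this is in place, combining the trivial cases and the two Reidemeister cases gives the dichotomy in the statement.
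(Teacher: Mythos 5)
Your overall strategy---pass from the Reeb complex to the P-complex so that only the Reidemeister-II and Reidemeister-III singularities remain relevant, then match these with the P-move list---is essentially the paper's strategy, but there is a genuine gap in where you locate the ``homotopic'' alternative of the statement. You attribute it entirely to the non-Reidemeister singularities (cusp-fold, eye birth/death, merge, swallowtail, 2-parameter coincidence), and you assert that a Reidemeister-II fold crossing always produces a cancelling-pair P-move and a Reidemeister-III fold crossing always produces a triangle/pentagon/hexagon P-move. Your citation of Lemma \ref{rmkcross} does not support this: that lemma yields one or three pants moves only under the hypothesis that the two pants decompositions are \emph{not} isotopic, and a crossing whose folds involve trivial saddles produces no pants move at all. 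This is precisely where the paper's proof obtains the second alternative: if one of the fold edges entering the Reidemeister-II or Reidemeister-III singularity is the preimage of a ``virtual'' edge in the sense of Lemma \ref{lempcomplex2}, i.e.\ not an index-three edge of the P-complex, then the singularity degenerates (the bigon in Figure \ref{3move} collapses to a single fold edge, the triangle to a single crossing), no P-move occurs, and $PB_0$ and $PB_1$ are merely homotopic. Your closing paragraph mentions ``bookkeeping of which saddles are essential,'' but only to guarantee that the Reidemeister-III case yields exactly one P-move rather than a composition, not to handle the case where it yields none; as written, your R-II/R-III analysis asserts a specific P-move in configurations where the pants-block decomposition does not change.

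The repair is the paper's dichotomy: in each Reidemeister case, split according to whether all fold edges through the singularity are index-three edges of $\mathcal{PC}_0$ and $\mathcal{PC}_1$ (then a single P-move results, as you describe) or at least one is virtual (then the decompositions are homotopic). On the other hand, your worry about a single Reidemeister-III singularity producing a composition of moves is legitimate---Lemma \ref{rmkcross} does allow a single crossing to carry three pants moves---and there you are attending to a point the paper's own proof passes over without comment; but the missing degenerate case above is the one the statement's conclusion actually depends on.
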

\begin{proof}
 Given such a 3-manifold $M$ with two Morse 2-functions $F_0$ and $F_1$ on $M$,  the existence of $PB_0$ and $PB_1$ is
 given by Corollary \ref{coro2}. By Lemma \ref{lempcomplex} and the induced moves, the only types of singularities
 in between $F_0$ and $F_1$ are either of the Reidemeister-II or Reidemeister-III type, since other types of singularities are elimiated
 when  turning a Reeb complex into a P-complex.
 If all of the fold edges of 
 this singularity are index-three fold edges in the P-complexes, then $PB_0$ and $PB_1$ differ by a P-move.
 If one of the fold edges of this singularity is the preimage of a ``virtual'' edge, as described in Lemma \ref{lempcomplex2}, this fold edge doesn't
 contribute to any essential move, so if this singularity is of Reidemeister-II type, then the top slice
 of the left of Figure \ref{3move} degenerates to a single
 index-three fold edge; if this singularity is of Reidemeister-III type, then the top slice of the middle of Figure
 \ref{3move} degenerates to a single crossing.
 In both cases there is no P-move between $PB_0$ and $PB_1$.
\end{proof}

We now prove the main theorem.
\begin{proof}
 Let $M$ be a compact, closed, hyperbolic manifold.  Let $PB_0$ and $PB_1$ be two pants-block decompositions of $M$. 
 Each pants-block decomposition defines a P-complex $\mathcal{PC}_j$
 by Lemma \ref{lem2}, for $j=0,1$. We can construct a Morse 2-function $F_j$ from $\mathcal{PC}_j$ by Lemma \ref{lempcomplex2}.
 By Lemma \ref{lempcomplex} and Corollary \ref{coro2}, $F_j$ also defines a P-complex homeomorphic to $\mathcal{PC}_j$ and a 
 pants-block decomposition homeomorphic to $PB_j$.
 Consider a generic homotopy $\{F_t\}_{0\leq t\leq 1}$. There exists finitely many singularities in this homotopy.
 Assume there is a singularity at $t=s_i$ for $1\leq i\leq n$, such that $0<s_1<s_2<...<s_n<1$.
 Consider $t_i\in [0,1]$ such that $t_0=0$, $t_1=1$ and $s_{i}<t_i<s_{i+1}$ for $i=1,...,n-1$.
 Then $F_{t_i}$ is also a Morse 2-function for $i=1,...,n-1$. Each $F_{t_i}$ defines a P-complex $\mathcal{PC}_i$ and a pants-block 
 decomposition $PB_i$ of $M$ by Lemma \ref{lempcomplex} and Corollary \ref{coro2}. 
 By Lemma \ref{lempmove}, each singularity at $t=s_i$ defines at most one P-move between $PB_{t_{i-1}}$ and 
 $PB_{t_i}$. Thus $PB_0$ and $PB_1$ are related by a finite sequence of P-moves.
\end{proof}

%\begin{figure}[htb]
%  \begin{center}
%  \includegraphics[width=2in]{locknot.eps}
%  \caption{}
%  \label{locknot}
%  \end{center}
%\end{figure}

\end{document}